\documentclass[11pt]{article}
	
\makeatletter
\renewcommand\section{\@startsection {section}{1}{\z@}%
                                   {-3.5ex \@plus -1ex \@minus -.2ex}%
                                   {2.3ex \@plus.2ex}%
                                   {\normalfont\fontfamily{phv}\fontsize{16}{19}\bfseries}}
\renewcommand\subsection{\@startsection{subsection}{2}{\z@}%
                                     {-3.25ex\@plus -1ex \@minus -.2ex}%
                                     {1.5ex \@plus .2ex}%
                                     {\normalfont\fontfamily{phv}\fontsize{14}{17}\bfseries}}
\renewcommand\subsubsection{\@startsection{subsubsection}{3}{\z@}%
                                    {-3.25ex\@plus -1ex \@minus -.2ex}%
                                     {1.5ex \@plus .2ex}%
                                     {\normalfont\normalsize\fontfamily{phv}\fontsize{14}{17}\selectfont}}
\makeatother

\usepackage{graphicx}
\usepackage{enumerate}
\usepackage{url} 
\usepackage{xcolor}
\usepackage{colortbl}

\usepackage{lineno,hyperref}
\usepackage{titlesec}
\usepackage{afterpage}
\usepackage{subcaption}
\usepackage[ansinew]{inputenc}
\usepackage[T1]{fontenc}
\usepackage{lmodern}
\usepackage[english]{babel}
\usepackage{setspace}
\usepackage{array}
\usepackage{amssymb,amsthm,amsmath}
\usepackage{calc}
\usepackage{cases}
\usepackage[square,numbers]{natbib}
\usepackage{color}
\usepackage{csquotes} 
\usepackage{booktabs}
\newsavebox\myboxgibt
\usepackage{multirow}
\usepackage{eurosym}
\usepackage{changes}
\usepackage{verbatim}
\usepackage{tabularx}
\usepackage[normalem]{ulem}
\usepackage{booktabs}
\usepackage{caption} 
\DeclareCaptionLabelFormat{cont}{#1~#2\alph{ContinuedFloat}}
\usepackage{rotating}
\usepackage{adjustbox}
\usepackage{optidef}
\usepackage{cleveref}
\usepackage{algorithm}
\usepackage[noend]{algpseudocode}

\usepackage{subcaption}
\usepackage{pdflscape}

\usepackage{pgfplots}
\pgfplotsset{compat = 1.15} 
\usetikzlibrary{pgfplots.statistics} 
\usepackage{pgfplotstable}
\usepgfplotslibrary{groupplots}
\usepackage{wrapfig}

\definecolor{color-multitsp}{rgb}{0.169, 0.514, 0.729}
\definecolor{color-singletsp}{rgb}{0.843, 0.098, 0.11}
\definecolor{color-efhasap}{rgb}{0.992, 0.682, 0.38}
\definecolor{color-efhs}{rgb}{0.671, 0.867, 0.643}
\definecolor{color-a0.25}{RGB}{170, 75, 214}
\definecolor{color-a0.5}{RGB}{77, 172, 189}
\definecolor{color-a1}{RGB}{173, 225, 142}
\definecolor{color-a2}{RGB}{255, 199, 104}
\definecolor{color-a4}{RGB}{176, 39, 41}

\newcommand{\argmin}[1]{{\operatorname{argmin}_{#1}}\;}
\usepackage{thmtools}
\usepackage{thm-restate}

\declaretheorem[name=Lemma]{lemma}

\declaretheorem[name=Definition]{definition}

\usepackage[resetlabels,labeled]{multibib}
\newcites{A}{Online Appendix References}
\bibliographystyleA{abbrvnat}

\theoremstyle{definition}

\newcommand{\ttour}{s^{\text{t}}}
\newcommand{\ttours}{S^{\text{t}}}

\newcommand{\dtour}{s^{\text{d}}}
\newcommand{\dtours}{S^{\text{d}}}
\newcommand{\depot}{v_0}
\newcommand{\policy}{\textsc{Alg}}
\newcommand{\ALG}{\textsc{Alg}}
\newcommand{\nrtrucks}{k^{\text{t}}}
\newcommand{\nrdrones}{k^{\text{d}}}
\newcommand{\nrvehicles}{k}
\newcommand{\OPThome}{\ensuremath{\text{OPT}}}
\newcommand{\OPT}{\ensuremath{\text{OPT}}}

\newcommand{\Ddrone}{D^{\text{d}}}
\newcommand{\segment}{\Delta}
\newcommand{\segmenttour}{t}
\newcommand{\multitsphome}{\ensuremath{\textsc{Optimistic}}}
\newcommand{\singletsphome}{\ensuremath{\textsc{Regretless}}}
\newcommand{\efhshome}{\ensuremath{\textsc{EFHS}}}
\newcommand{\efhahome}{\ensuremath{\textsc{EFHA}}}

\newcommand{\truckonly}{\ensuremath{\textsc{TruckOnly}}}

\newcommand{\tsp}{\textsc{TSP}}

\newcommand{\initial}{first}
\newcommand{\secondary}{second}

\newcommand{\tspmnnamelong}{the multiple Traveling Salesmen Problem with minmax objective}
\newcommand{\tspmnname}{MultiTSP}
\newcommand{\gstar}{\tilde{\mathcal{G}}}

\newcommand{\IRANDOM}{RANDOM}
\newcommand{\IGRAPHCLASS}{VAR}
\newcommand{\ISMALL}{SMALL}
\newcommand{\ILARGE}{LARGE}
\newcommand{\IBASE}{BASE}

\DeclarePairedDelimiter\set{\{}{\}}
\DeclarePairedDelimiter\abs{\lvert}{\rvert}

\newcommand{\bhktabelle}{B}

\newcommand{\N}{\mathbb{N}}
\newcommand{\NN}{\mathbb{N}}
\def\Oh{\ensuremath{\mathcal{O}}} 

\usepackage{footnote}
\usepackage{footmisc}
\renewcommand{\thefootnote}{\alph{footnote}}

\newcommand{\fastfootnote}[1]{%
\let\oldthefootnote=\thefootnote%
\setcounter{footnote}{0}%
\renewcommand{\thefootnote}{\fnsymbol{footnote}}%
\footnote{#1}%
\let\thefootnote=\oldthefootnote%
}

\begin{document}

\def\spacingset#1{\renewcommand{\baselinestretch}%
	{#1}\small\normalsize} \spacingset{1}

{
\title{The relief distribution problem with trucks and drones under incomplete demand information}

\date{}

\author{Aaron Neugebauer$^{a,b}$, Alena Otto$^c$\fastfootnote{Corresponding author, email: \texttt{alena.otto@tum.de}}, Marie Schmidt$^a$ \bigskip \\
{\footnotesize $^a$ University of W{\"u}rzburg, Institute of Computer Science,  Germany,} \\
{\footnotesize $^b$ Image Processing Department, Fraunhofer Institute for Industrial Mathematics ITWM, Germany,} \\
{\footnotesize $^c$ Technical University of Munich, Chair of Advanced Analytics in Manufacturing Management, Germany }
}
\maketitle



\begin{abstract}
	Disaster relief operations often take place under uncertainty regarding the extent of damage
	across locations. In this paper, we study the delivery of relief aid in the aftermath of disasters when delivery vehicles are assisted by surveillance drones and the demand for relief supplies is initially unknown.
	We introduce a stylized problem that arises in many emergency supply delivery settings -- the relief distribution problem (RDP).
	In RDP, emergency vehicles, referred to as trucks, must distribute relief supplies on a network, starting from the depot to potential delivery locations, whose demand is initially unknown. The trucks are assisted by surveillance drones, which cannot deliver relief supplies, but scout delivery locations to see whether relief supplies are needed or not. The objective is to visit all location by any vehicle, deliver supplies to all damaged ones, and minimizing the completion time of the relief operation.
	We study two natural policies for the online problem RDP which we evaluate in two ways: the competitive ratio quantifies the performance in comparison to an optimal solution obtained under full information on damages, the drone-impact is the ratio of the algorithm's performance to the best outcome achievable without drones.
	Through theoretical analysis and computational experiments, we characterize the operational trade-offs between these policies and derive insights for the effective deployment of drones in disaster response.
\end{abstract}

\noindent%
{\it Keywords: Disaster Relief, Drones, Online Policies, Competitive Ratio Analysis, Drone-Impact Ratio Analysis, Minmax Multiple Traveling Salesman Problem } 

\spacingset{1.5} 

\section{Introduction} \label{sec:intro}
Disaster events are becoming more frequent and severe, especially due to global warming \citep{kundu2022emergency,kumar2024global,gao2025has}. In 2024 alone, 393 natural hazard-related disasters occurred, resulting in economic losses of USD 241.95 billion, 16,753 fatalities, and affecting 167.2 million people \citep{cred2025}.
Rapid and effective disaster response is therefore increasingly important. A central task in the aftermath of disasters is the distribution of emergency supplies, such as food and water, medication, blood, and other health care services, or other critical materials like, e.g., fuel or electric power generators to affected locations.

However, relief operations often take place under  uncertainty regarding the extent of damage across locations, implying that the corresponding demand for relief supplies is also initially unknown, as this information is revealed only gradually as the situation unfolds.
A key source of information is remote sensing imagery collected by satellites and drones, which enables rapid and automated assessment of disaster impacts -- such as building damage from floodings -- via AI-assisted image analysis \cite{polushko2025remote, hatic2025post}.

\begin{wrapfigure}{r}{0.41\textwidth}
	\begin{center}
		\includegraphics[width=0.40\textwidth]{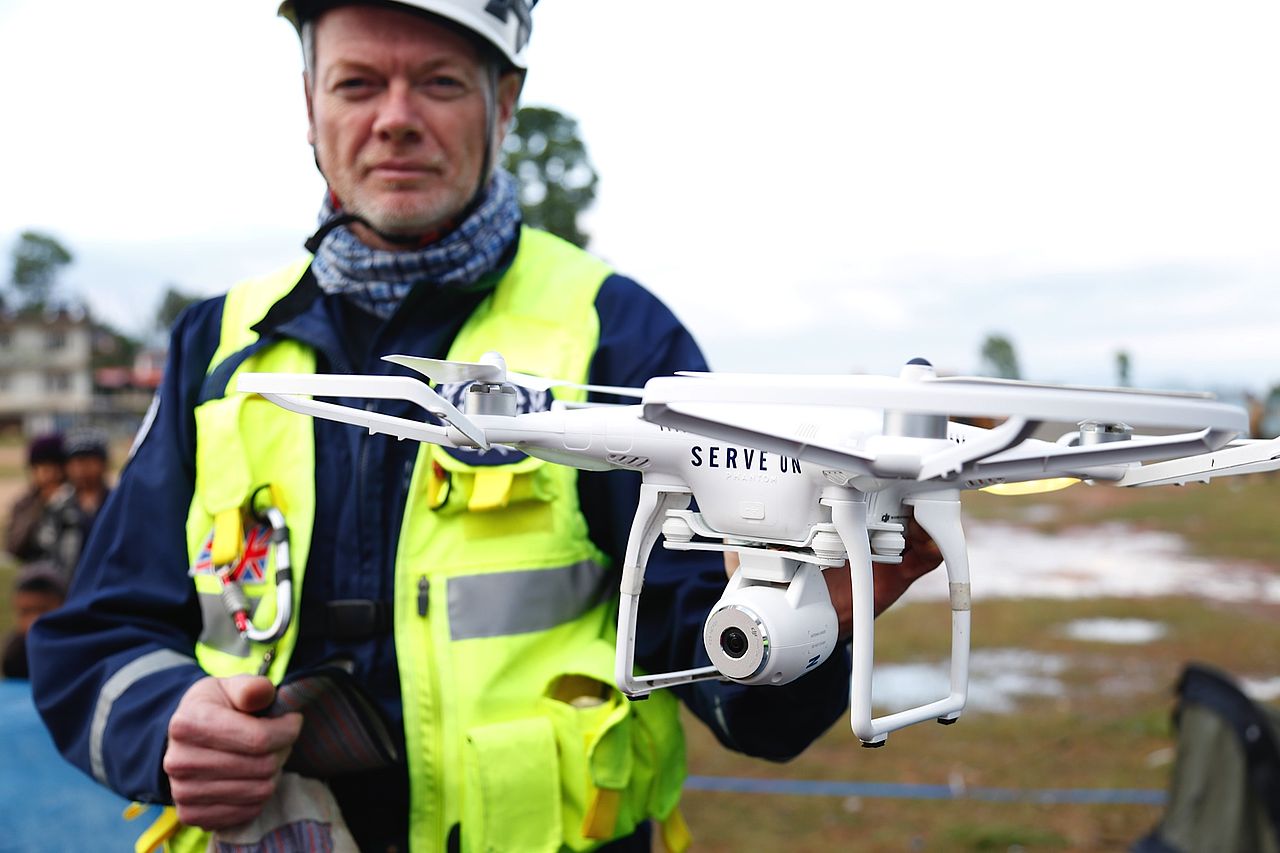}
	\end{center}
	\tiny{A photograph by Jessica Lea/DFID distributed under a CC BY 2.0 license}
	\caption{\footnotesize{Drone surveillance during earthquake in Nepal, 2015}}
	\label{fig:drone}
\end{wrapfigure}

Drones are frequently utilized for the collection of images, due to their ability to
deliver high-resolution images, operate in cloudy conditions, and their quick deployment capabilities \cite{gebrehiwot2019deep}.
In surveillance missions, drones typically carry lightweight sensors, allowing them to cover large geographic areas due to low energy consumption. By rapidly collecting situational information, surveillance drones can significantly improve operational decision-making during disaster response \citep{Alfaris2024689, Yucesoy2025545}. Figure~\ref{fig:drone} pictures the surveillance drone deployed in relief operations after the 7.8-magnitude earthquake in Nepal in 2015, which killed over 8000 people and injured thousands more \citep{govuk2026}.

\begin{figure}[t]
	\centering
	\includegraphics[scale=0.6]{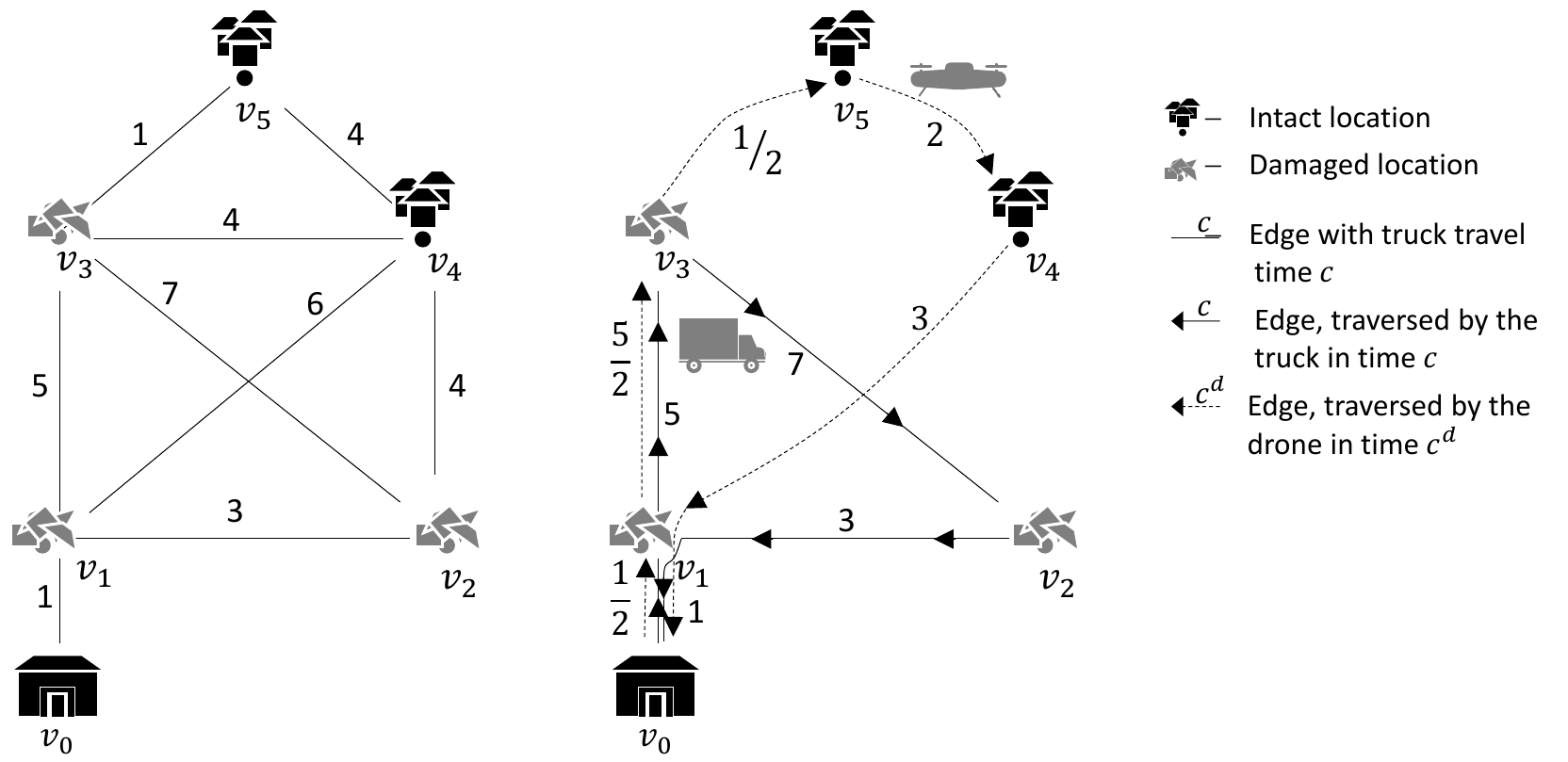}
	\caption{\footnotesize{Example  of an RDP$^*$ instance (full-information counterpart) \\
			\textit{Note.} \textit{Left figure}: An instance with $\alpha=2$, i.e., the drone is two times faster than the truck; damaged nodes $D=\{v_1,v_2,v_3\}$, 1 truck, and 1 drone. \\
			\textit{Right figure}: An optimal solution for this instance with makespan 17: the truck travels $(\depot, v_1, v_3, v_2, v_1, \depot)$, the drone travels $(\depot, v_1, v_3, v_5, v_4, v_1, \depot)$.
		} }
	\label{fig:1}
\end{figure}

In this paper, we study the delivery of relief aid in the aftermath of disasters when delivery vehicles are assisted by surveillance drones.
We introduce a stylized problem that arises in many emergency supply delivery settings -- \textit{the relief distribution problem (RDP)}. In RDP (see Figure~\ref{fig:1}), $\nrtrucks\in \mathbb{N}$ emergency vehicles, referred to as \textit{trucks}, must distribute relief supplies starting from the depot $\depot$ to a nonempty set $C$ of potential delivery locations -- \textit{villages} -- and return to the depot $\depot$ via a road network, represented as an undirected graph. We exclude trivial instances by assuming that all nodes in $C$ are at nonzero distances from $\depot$. The trucks are assisted by $\nrdrones\in \mathbb{N}$ surveillance drones, which travel $\alpha>0$ times faster than trucks. If the villages remained intact, they do not require relief aid. If they experienced some destruction, then they require relief aid.
Only a truck can deliver relief aid, the drones cannot do it because of capacity restrictions. We call villages that experienced destruction and require relief aid as \textit{damaged} and denote them as set $D\subseteq C$. Crucially, set $D$ is \textit{initially unknown}. The status of each village -- damaged or not -- is revealed only after it is visited by either a truck or a drone. The objective is to visit all locations in $C$, deliver supplies to all damaged villages in $D$ using trucks, and minimize the completion time of the relief operation, defined as the time when the last vehicle returns to the depot. The RDP is an \textit{online} problem, meaning that information is only partially available and arrives dynamically \citep[cf.][]{albers2003}.

We evaluate online algorithms for RDP along two key dimensions: \textit{performance} and \textit{the impact of drones}.
A \textit{performance} gap measure should provide information on the deviation of the algorithm's result from what can be achieved by any other, including the best possible algorithm. In the absence of distributional information on the uncertain parameters, competitive ratio emerges as the most widespread and well-formalized measure for algorithm's performance \citep{albers2003, fiatandwoeginer1998}. \textit{Competitive ratio} $\sigma(ALG)$ of online algorithm $\ALG$ is the worst-case ratio of the online algorithm's cost to the cost of an optimal offline algorithm, where all data are known a priori:
\begin{align*}
	 &  &  & \sigma(ALG)=\sup_{I\in \mathcal{I}}\frac{\ALG(I)}{\OPT(I^*)}, &
\end{align*}
where $I$ is an RDP instance from the set of all possible instances $\mathcal{I}$, $I^*$ is the respective instance with full information, $\ALG(I)$ is the result of $\ALG$ on $I$, and $OPT(I^*)$ is the optimal objective value of $I^*$.
The algorithm's competitive ratio provides information that simulations cannot: it offers a \textit{guaranteed performance bound} across all possible scenarios. When delivery times in a disaster relief operation are long, the competitive ratio indicates the extent to which delays are caused by the disaster itself versus algorithmic limitations. For example, if the competitive ratio equals one, no algorithm can improve the outcome, and any extended delivery duration reflects the inherent difficulty of the instance rather than shortcomings of the algorithm.

Similarly, \textit{the impact of using drones} in an online RDP algorithm $\ALG$ is measured by the worst-case ($\bar{\omega}$) or best-case ($\underline{\omega}$) \emph{drone-impact ratio}, i.e., the ratio of the algorithm's performance to the best outcome achievable without drones. In the absence of drones, the problem reduces to \textit{\tspmnnamelong\ (\tspmnname)} with $\nrtrucks$ trucks visiting all nodes in $C$ \citep[cf.][]{bektas2006, francaetal1995}. We introduce this problem formally later and denote its optimal objective value by $\tsp_{\nrtrucks,0}(C(I))$. Then we can define the best-case and the worst-case \emph{drone-impact ratio} as, respectively:
\begin{align*}
	 &  &  & \bar{\omega}(ALG)=\sup_{I\in \mathcal{I}}\frac{\ALG(I)}{\tsp_{\nrtrucks, 0}(C(I))},      &
	 &  &  & \underline{\omega}(ALG)=\inf_{I\in \mathcal{I}}\frac{\ALG(I)}{\tsp_{\nrtrucks,0}(C(I))}. &
\end{align*}

Our main finding is that an online policy $\ALG$ that is optimal in terms of competitive ratio may not be \textit{regretless}, even if the drones are fast ($\alpha>1$); that is, it may achieve substantially worse performance compared with truck-only solutions ($\bar{\omega}(ALG)>1$). This result is somewhat counterintuitive: although drones provide additional surveillance capabilities, their introduction can actually significantly increase the completion time of relief operations in some cases. In realistic instances, we observe makespans that are up to 100\% larger after introducing surveillance drones. This phenomenon arises from the dynamic arrival of information, which creates a nontrivial trade-off between exploration and delivery efficiency.

Motivated by this observation, we study two natural policies for RDP: $\multitsphome$, which achieves the best possible competitive ratio, and $\singletsphome$, which is regretless and achieves the best possible  drone-impact ratios $\bar{\omega}$ and $\underline{\omega}$. Through theoretical analysis and computational experiments, we characterize the operational trade-offs between these policies and derive insights for the effective deployment of drones in disaster response.

RDP differs from classical online routing problems in several important ways (see Section~\ref{sec:lit_review}). First, it is \textit{semi-online} and involves \textit{exploration}. Semi-online means that some information -- in RDP, the set of potential locations -- is known \textit{a priori}. Exploration means that routing decisions influence the timing at which new information is revealed. Second, the problem involves heterogeneous vehicles whose operations have to be synchronized: drones that perform surveillance-only tasks and vehicles that perform deliveries; the information collected by surveillance drones becomes immediately available for trucks, who may change their routes.

Our contributions are threefold:
\begin{itemize}
	\item To the best of our knowledge, we are the first to formulate RDP.

	\item We design two policies: $\multitsphome$, which achieves the best possible competitive ratio in a range of practice-relevant settings, and $\singletsphome$, which achieves the best possible  drone-impact ratios $\bar{\omega}$ and $\underline{\omega}$. We \textit{systematically} analyze these policies with respect to the competitive ratio and  drone-impact ratios
	      as a function of the drone speed
	      $\alpha$, the number of trucks $\nrtrucks$, and the number of drones $\nrdrones$.

	\item Through extensive computational experiments -- including simulated mountain and coastal environments, which are most prominent in disaster relief operations -- we show that theoretically derived bounds \textit{closely approximate the observed performance} of the policies.  We
	      benchmark these policies and derive managerial insights for the effective deployment of surveillance drones in relief logistics.
\end{itemize}

We proceed with a literature review (Section~\ref{sec:lit_review}) and the formal problem statement (Section~\ref{sec:basic_problem}).  Section~\ref{sec:overview} summarizes the
main results of this article: the competitive ratio and  drone-impact ratio results for $\multitsphome$ and $\singletsphome$. This section can be read independently of the technical details of the proofs of Section~\ref{sec:analysis}. Section~\ref{sec:experiments} reports computational experiments and we conclude with
a discussion and future research directions in Section~\ref{sec:conclusion}.

\section{Literature review} \label{sec:lit_review}
\begin{table}
	\centering
	\scriptsize
	\begin{tabular}{p{1.2cm}p{2.1cm}p{2.8cm}p{4.1cm}p{1.3cm}p{0.3cm}p{0.3cm}}
		\toprule
		Article                                       & Application                   & Source of dynamism           & \# drones                      & \# trucks                   & \multicolumn{2}{c}{Analysis$^\dagger$}              \\
		                                              &                               &                              &                                &                             & CR                                     & DI         \\
		\midrule
		\citep{coleman2024optimaldeliveryfaultydrone} & communication \quad failure   & location of the failed drone & 2 (failed, recovery)           & --                          & \checkmark                             & --         \\
		\citep{deBerg2024254}                         & search                        & location of the target       & 1                              & --                          & \checkmark                             & --         \\
		\citep{Gharaibeh20253502}                     & location of charging stations & energy requests              & multiple                       &                             & \checkmark                             & --         \\
		\citep{Gou2023}$^{\dagger\dagger}$            & online deliveries             & requests                     & 2 (independent, truck-carried) & 1                           & \checkmark                             & --         \\
		\citep{Jana2024488}                           & online deliveries             & requests                     & multiple                       & 1$^{\dagger\dagger\dagger}$ & \checkmark                             & --         \\
		\citep{Otto20251198}                          & relief deliveries             & arc damage                   & 1                              & 1                           & \checkmark                             & --         \\
		this study                                    & relief deliveries             & relief demand                & multiple                       & multiple                    & \checkmark                             & \checkmark \\
		\bottomrule
		\multicolumn{7}{l}{\scriptsize{$^\dagger$ CR - competitive ratio analysis, DI - theoretical analysis of the drone's impact: worst-case and best-case bounds }}                                                                    \\
		\multicolumn{7}{l}{\scriptsize{\qquad on the ratio to truck-only solution. \quad $^{\dagger\dagger}$ - precise classification is unclear due to inconsistencies in the paper.}}                                                   \\
		\multicolumn{7}{l}{\scriptsize{$^{\dagger\dagger\dagger}$ - the truck travels on the line.}}                                                                                                                                      \\
	\end{tabular}
	\caption{Studies on competitive ratio analysis for drone routing applications}
	\label{tab:litrev_drones_online}
\end{table}

Research on drone operations and humanitarian logistics, particularly the distribution of emergency supplies following disasters, has expanded significantly in recent years \citep[e.g.,][]{Adsanver2024384, Chung2020, kundu2022emergency, optimization-approaches-for-civil-applications-of-uav-survey}.
Among these applications, the delivery of emergency supplies in disrupted environments has emerged as one of important operational uses \citep[e.g.,][]{Alfaris2024689, Yucesoy2025545}.

RDP is an online routing problem. Seminal works by \citet{Ascheuer2000} and \citet{Ausiello1999} initiated the competitive ratio analysis of online routing problems in which information about  \textit{requests} (nodes to be visited) arrives dynamically over time. Unlike in  RDP, the locations of requests in these models are not known a priori.
Since then, routing problems with dynamically arriving service requests have been studied in a variety of settings \citep[e.g.,][]{ausielloetal2008,jailletandwagner2006,jailletandwagner2008}.
RDP differs from classical online routing models, because it is semi-online, involves exploration, and heterogeneous vehicles.
To the best of our knowledge, only \citet{Bampis202365} consider an online routing problem with known request locations, but they do not consider exploration.
A related line of work studies \textit{the online exploration problem}, where vehicles explore a graph by visiting its nodes. In these models, however, only the source node is initially known; nodes and edges are revealed upon visitation, and limited information about outgoing edges (e.g., outdegree) becomes available when a node is reached \citep{birxetal2021,Megow201262}. More recently, online problems have also been studied in the presence of \textit{advice} and within the framework of \textit{learning-augmented algorithms}, which incorporate predictions together with measures of prediction error \citep{Antoniadis2026434,Shin2025}.
Within this perspective, RDP can be viewed as a variant of the graph exploration problem with surveillance drones and trucks, in which the underlying graph structure is perfectly predicted a priori, while only the status of nodes (damaged or not) must be discovered through exploration. To the best of our knowledge, such a variant has not been studied in the literature.

Overall, the literature applying competitive ratio analysis to drone-assisted routing problems is still nascent (see Table~\ref{tab:litrev_drones_online}). The settings studied thus differ substantially from ours and typically involve at most a single truck. While some works consider trucks for delivery operations \citep{Gou2023,Jana2024488,Otto20251198}, only \citet{Otto20251198} incorporates the exploration of network damage. This study, which is closest to our setting, examines emergency supply delivery under uncertainty about road conditions, where vehicles learn the status of roads only upon traversing them.

Beyond competitive ratio analysis,
a few studies compare truck-only delivery systems with integrated truck-and-drone systems, often analyzing the potential efficiency improvements obtained through drone assistance \cite{opt-approaches-for-tsp-with-drones,poikonen2017vehicle}. However, these comparisons are typically conducted in deterministic settings where all customer locations are known in advance.

To conclude, to the best of our knowledge, this study is the first to analyze  RDP -- an online routing problem with given delivery locations, exploration, and heterogeneous vehicles -- and one of the few papers that apply competitive-ratio analysis to routing problems with drones.

\section{The relief distribution problem (RDP)} \label{sec:basic_problem}

Before we state RDP  in Section~\ref{sec:basic_problem_UD}, we formulate it \textit{full-information counterpart} in Section~\ref{sec:basic_problem_FI}, which we mark with $^*$ as RDP$^*$. Section~\ref{sec:assumptions} discusses assumptions.

\subsection{Relief distribution problem under full information } \label{sec:basic_problem_FI}

We consider a setting with $\nrtrucks \in \NN$ trucks and $\nrdrones \in \NN$ drones. The street network is described as an  undirected graph $G=\left(V, E, c \right)$, where $V$ is the set of nodes, $E\subseteq\{\{i,j\}| i,j\in V \}$ defines the set of edges, and $c$ are edge labels.
Nodes $V=\{\depot\}\cup C$ consists of a depot node $\depot$, where both trucks and drones start and need to return after completing their tours, and of a \textit{nonempty} set of potentially damaged nodes $C$, which have to be visited by either of the vehicles (truck or drone) at least once. The subset $D \subseteq C$ refers to \textit{damaged nodes}.
All nodes in $D$ have to be visited by a truck.

We assume that $G$ is connected but not necessarily complete.
Edge labels describe distances with $c(i,j) > 0, \forall \{i, j\} \in E$ that fulfill the triangle inequality, i.e., the graph is metric.
If $\set{i, j} \notin E$, we set $c(i, j)$ to the distance along a shortest path between $i$ and $j$.
Observe that the nonempty set $C$ and positive distances between distinct locations guarantee that denominators in the competitive or  drone-impact ratios are nonzero.
We assume homogeneous trucks and homogeneous drones and normalize the truck speed to be $1$ and the drone speed to be $\alpha>0$, i.e., $c(i,j)$ is the travel time needed for the truck to traverse $\{i,j\}\in E$, and by $c^d(i,j)=\frac{1}{\alpha}\cdot c(i,j)$ we denote the drone flight time for this edge.

Denote by a \emph{tour} (or \emph{path}) a sequence of adjacent edges, which can be specified, alternatively, by the sequence of vertices defining these edges.
In this papers we sometimes abbreviate the tour notation by omitting nodes from the defining sequence. To reconstruct a tour (i.e., an actual path in the network) from this abbreviated representation, take any shortest path between vertices that succeed each other in the defining sequence. We use $c(t)$ or $c(i_1,i_2,\ldots,i_k)$ to denote the length of tour $t=(i_1,i_2,\ldots,i_k)$.

The objective of the \textit{relief distribution problem under full information RDP$^*$}
is to find \textit{truck tours} $\{\ttour_i:i=1,\ldots,\nrtrucks\}$ and \textit{drone tours} in $\{\dtour_j:j=1,\ldots,\nrdrones\}$ in $G$ to minimize the \textit{makespan}, i.e., the duration of the longest tour,
such that: \textit{(i)}  All tours start and end at the depot $\depot$; \textit{(ii)}   Each node in $C$ is visited either by a truck or by a drone; \textit{(iii)} each damaged node $d\in D$ is visited by a truck.
Note that we do not require every vehicle to depart from the depot.

We call an instance of RDP$^*$ as $I^*=(\nrtrucks,\nrdrones,\alpha,G,D)$ and denote its optimal objective value as $\OPThome(I^*)$. Especially when graph $G$ of $I^*$ is not complete, vehicles may visit a node several times, cf. the example in Figure~\ref{fig:1}.
For $\nrdrones=0$ and $\nrtrucks=1$, RDP$^*$ corresponds to a traveling salesman problem (\tsp). NP hardness of our problems follows by reduction from this problem.

\subsection{Relief distribution problem under uncertain damage} \label{sec:basic_problem_UD}

Immediately after a disaster, the information on which nodes from the set $C$ are damaged may not be available. We study this setting in \textit{the relief distribution problem (RDP)}.
An \emph{instance} $I=(\nrtrucks,\nrdrones,\alpha,G,D)$ of RDP  contains the same data as its full information counterpart -- instance $I^*$ -- but the status of the nodes in set $C$ (i.e., whether they pertain to the set of damaged nodes $D$ or not) is not known in the beginning and is only \emph{discovered} when a vehicle has arrived at the respective node.

In problems under incomplete information, such as RDP, one does not speak of solutions, but of \textit{policies}. A policy specifies the actions of trucks and drones in each information state -- i.e., `moving to node $v_i$ from the current location along an edge in graph $G$' or `waiting' -- based only on information available initially and the damage status of the visited nodes. Luckily, because our analysis focuses on best-possible as well as particular given policies, we avoid a full specification of the state and action spaces, which would lead to introducing unnecessary cumbersome notation.

Each algorithm $\ALG$ we study is a policy that always yields a feasible solution for the full-information counterpart $I^*$ and the objective value is evaluated as defined in Section~\ref{sec:basic_problem_FI}.

\subsection{Discussion of assumptions} \label{sec:assumptions}
RDP represents a stylized problem that summarizes common characteristics of many relief distribution settings. We make several assumptions and omit the following details, whose analysis we defer to future research:

\textit{Immediate information transmission}. We assume that damage is recognized instantly when a node is visited by any vehicle, and information is transmitted instantly.

\textit{Immediate unloading and relief distribution}.
We do not consider service times in the nodes, i.e., relief aid is distributed instantly when the node is visited by a truck.

\textit{Unlimited truck capacity}. In line with the literature \citep{Gou2023, Jana2024488, Otto20251198}, we assume that trucks have sufficient capacity. This assumption is particularly relevant for emergency supplies such as medication, first-aid and hygiene kits, and electric power generators.

\textit{Constant truck-to-drone travel time ratio $\alpha$.}
In line with the literature \citep{opt-approaches-for-tsp-with-drones,poikonen2017vehicle}, we assume that drone travel times are proportional to truck travel times, i.e., $c(i,j) = \alpha\cdot c^d(i,j), \forall i,j \in V$.
While in practice truck speeds depend on road conditions, and drones may travel more directly but are affected by wind, this abstraction facilitates tractable analysis.
To apply our worst- and best-case results, practitioners may calibrate $\alpha$ to the corresponding expected worst (best) ratio $\frac{c(i,j)}{c^d(i,j)}$ over all relevant node pairs in $G$ for the operational setting under consideration.

\textit{Sufficient energy}. We assume that drone and truck tour lengths are not constrained by energy limitations. This assumption is not overly restrictive: for instance, lightweight drones, such as those developed by Quantum Systems, can achieve flight ranges of up to 100 km \citep{magdolna2025}, with further improvements expected as the technology matures.

\section{Overview of main results} \label{sec:overview}
\begin{figure}
	\centering
	\begin{subfigure}[b]{0.48\textwidth}
		\centering
		\includegraphics[scale=0.6]{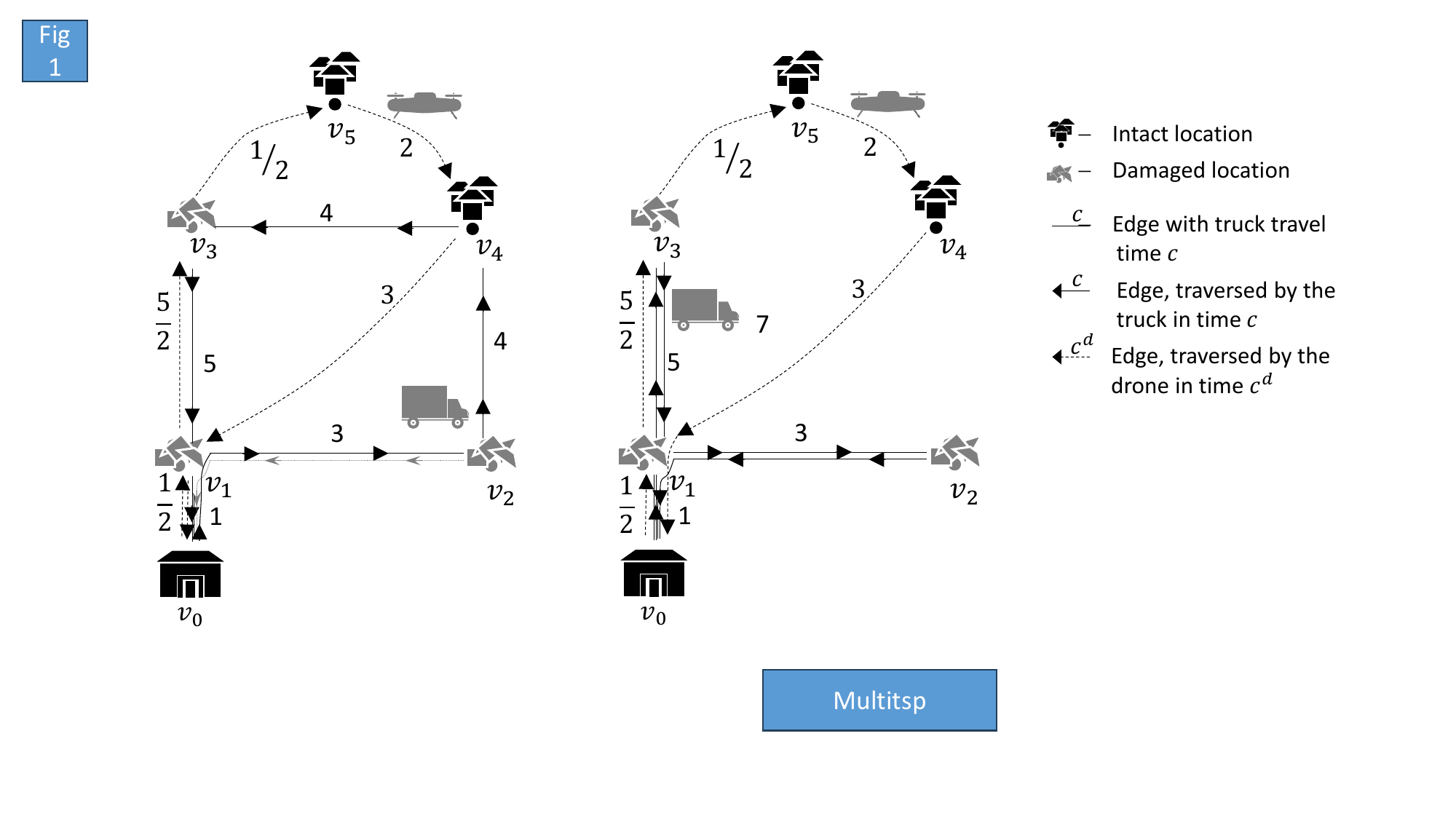}
		\caption{\footnotesize{\multitsphome\\ }}
		\label{fig:2multitsp}
	\end{subfigure}
	\begin{subfigure}[b]{0.48\textwidth}
		\centering
		\includegraphics[scale=0.6]{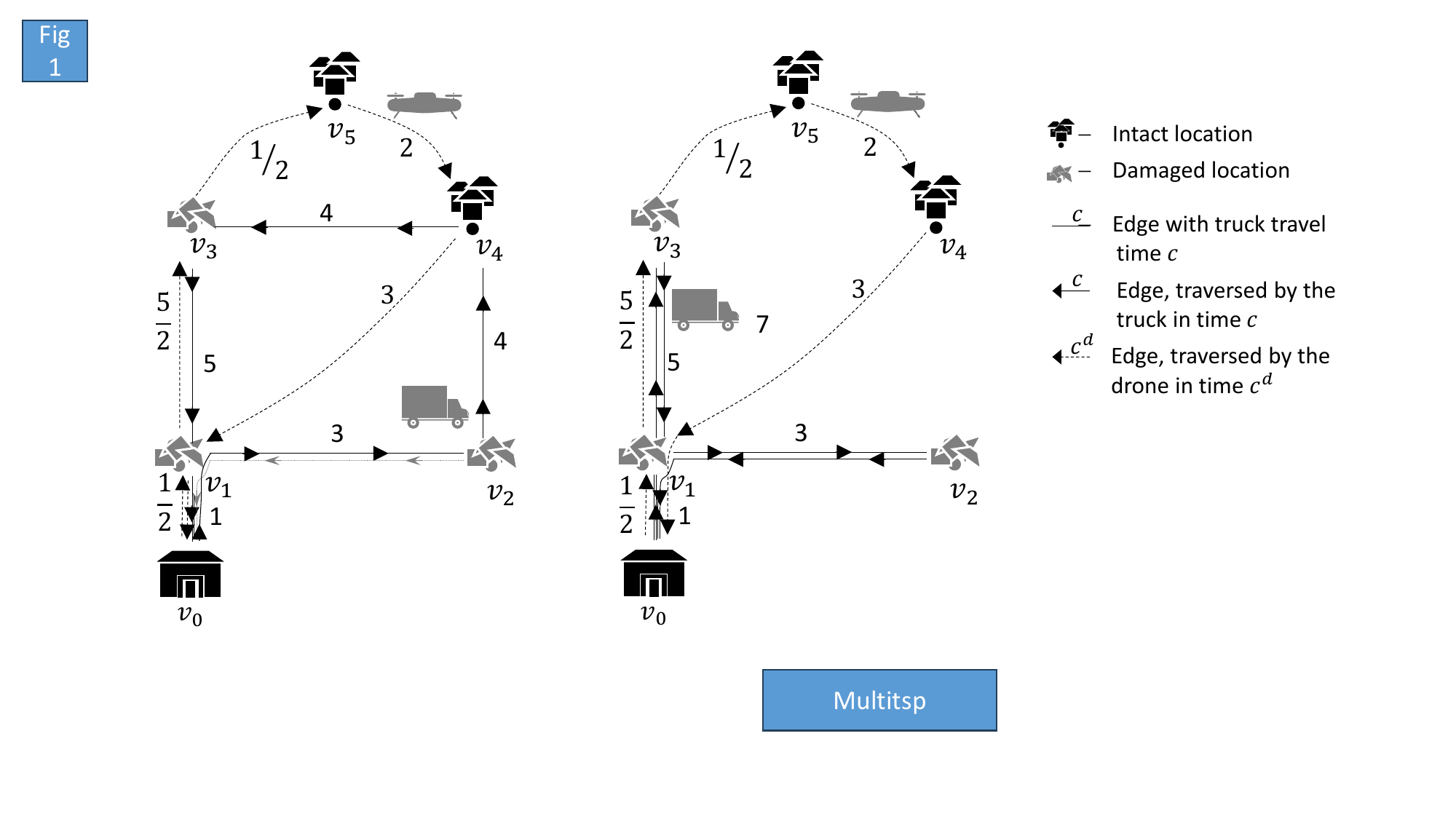}
		\caption{\footnotesize{\singletsphome\\}}
		\label{fig:2singletsp}
	\end{subfigure}
	\caption{\footnotesize{\multitsphome\ and \singletsphome\ for the example in Figure~\ref{fig:1} }}\label{fig:2}
\end{figure}

Before formally introducing $\multitsphome$ and $\singletsphome$ in Section~\ref{sec:policies}, this section provides intuition for these policies and an overview of our main results.

Policy $\multitsphome$ treats trucks and drones as co-equal vehicles. Initial tours are constructed for all vehicles so that every node in $C$ is visited and the depot is reached again in minimum time. After that, trucks visit the damaged nodes identified by the drones.

In contrast, $\singletsphome$ treats drones as purely supporting vehicles. It first constructs truck tours that visit all nodes in $C$, ignoring the presence of drones. When trucks start executing these tours, drones inspect the nodes toward the end of the tours, allowing trucks to skip nodes whose status has been revealed as not damaged.

Figure~\ref{fig:2} illustrates the two policies on the example from Figure~\ref{fig:1} for $\nrtrucks=\nrdrones=1$ and $\alpha=2$. Under \multitsphome, the truck first travels $(\depot, v_1, v_2, v_1, \depot)$ and the drone travels $(\depot, v_1, v_3, v_5, v_4, v_1, \depot)$. After both vehicles return to the depot at time $9$, the truck visits the remaining damaged nodes -- i.e., those not yet visited by the truck -- via $(\depot, v_1, v_3, v_1, \depot)$, resulting in a makespan of 21.
Under \singletsphome,
first, the tour $(\depot, v_1, v_2, v_4, v_5, v_3, v_1, \depot)$ is partitioned in a truck segment $(\depot, v_1, v_2)$ and a drone segment $(v_4, v_5, v_3, v_1, \depot)$.
Both vehicles traverse their segments - the drone in reverse order.
At time 4, the truck reaches the last node of its segment, $v_2$, while the drone is en route between $v_5$ and $v_4$.
At this point, the truck is reassigned to cover the unexplored node $v_4$ and the damaged node $v_3$ before returning to the depot via a shortest path  $(v_2, v_4, v_3, v_1, \depot)$, while the drone completes its route. The resulting makespan is 18.

\begin{table}
	\centering
	\footnotesize
	\begin{tabular}{llll}
		\toprule                                                                                                                                                                                                                                    \\
		Measure                     & Any deterministic online policy $\policy$                                                      & $\multitsphome$                                                                           & $\singletsphome$ \\
		\midrule
		Drone-impact ratio                                                                                                                                                                                                                          \\
		(best-case $\underline{\omega}$)
		                            & $\ge\frac{1}{1+\alpha\lceil\frac{\nrdrones}{\nrtrucks}\rceil} $
		                            & $\le \frac{1}{1+\alpha\frac{\nrdrones}{\nrtrucks}} $
		                            & $ \le   \frac{1}{1+\alpha\frac{\nrdrones}{\nrtrucks}} $ for some $\alpha, \nrtrucks,\nrdrones$
		\\
		(worst-case $\bar{\omega}$) &
		$\ge 1$
		                            & $\ge \min \{1+\frac{1}{\alpha},\frac{1+\alpha}{1+\varepsilon}\}$
		                            & $=1$
		\\
		                            &                                                                                                & $\le \min\{2,1+\frac{1}{\alpha}\left\lceil\frac{\nrtrucks}{\nrdrones}\right\rceil\}$                         
		\\
		\midrule
		Competitive ratio $\sigma$
		                            & $\ge\min\{2,1+\alpha\lceil\frac{\nrdrones}{\nrtrucks}\rceil\}$ for $\frac{1}{\alpha} \in \NN$
		                            & $= \min\{2,1+\alpha\lceil\frac{\nrdrones}{\nrtrucks}\rceil\}$
		                            & $\le 1+\alpha\lceil\frac{\nrdrones}{\nrtrucks}\rceil$
		\\
		                            & $\ge 2-\frac{1}{2\alpha}$ for $\alpha\in \NN$
		                            &                                                                                                & $=1+\alpha\lceil\frac{\nrdrones}{\nrtrucks}\rceil$ for some $\alpha, \nrtrucks,\nrdrones$                    \\
		                            & $\ge \min \{2\alpha,\frac{2}{\alpha}\}$                                                        &                                                                                           &                  \\
		\bottomrule
	\end{tabular}
	\caption{\footnotesize{Overview of main results}}
	\label{tab:overview}
\end{table}

Table~\ref{tab:overview} summarizes our main results on the competitive and drone-impact ratios of \multitsphome\ and \singletsphome. Both policies fully exploit the potential of drone technology, since they achieve the best-case drone-impact ratio $\underline{\omega}$ of $\frac{1}{1+\alpha\lceil\frac{\nrdrones}{\nrtrucks}\rceil}$, which is the \textit{best possible for any online algorithm} $\policy$ (proven for integer-valued ratios $\lceil\frac{\nrdrones}{\nrtrucks}\rceil$).  This formula highlights that the achievable benefit from using drones increases with both the drone speed $\alpha$ and the ratio of available drones to trucks $\frac{\nrdrones}{\nrtrucks}$. However, the policies exhibit markedly different relative performance with respect to the remaining ratios.

\singletsphome\ achieves the \textit{best possible} worst-case drone-impact ratio $\bar{\omega}$ of 1, meaning that using drones under this policy never worsens the makespan of relief supply deliveries compared to truck-only operations. In other words, under \singletsphome, we never regret using drones. However, this comes at a cost in terms of performance: the competitive ratio $\sigma$, which compares the policy's result to the optimal solution under full information, can be substantially higher for \singletsphome\ than for \multitsphome\ when drones are fast ($\alpha > 1$), e.g., at least 50\%  higher if $\alpha=2$ and at least 100\% higher if $\alpha=3$. One mechanism leading to performance loss is the following. Since \singletsphome\ has trucks traverse the same tours as in truck-only operations -- only skipping nodes identified as undamaged by the drones -- this policy is vulnerable when damages are concentrated along a few truck routes, preventing effective node skipping.

In contrast, \multitsphome\ achieves low competitive ratios $\sigma$ of $\min\{2,1+\alpha\lceil\frac{\nrdrones}{\nrtrucks}\rceil\}$, which are \textit{the best possible} if the truck speed is an integer multiple of the drone speed, meaning that no deterministic online policy can outperform it in this setting. However, under \multitsphome, the use of drones can sometimes increase the makespan of relief delivery compared to truck-only operations, even when drones are fast ($\alpha>1$).  When drones and trucks have the same speed, we can construct examples where the makespan doubles relative to a truck-only solution.

Our theoretical bounds and tightness results also hold for metric and planar graphs, as all constructed examples are based on such graphs (a graph is called \emph{planar} if it can be drawn on the plane so that its edges intersect only at their endpoints). This is particularly relevant in practice, since distances in street networks (graphs) are metric (i.e., they satisfy the triangle inequality), and these networks are often planar or nearly planar.

\section{Drone-impact and competitive ratio analysis}\label{sec:analysis}
After introducing some notation and basic lemmata that will be used throughout our analysis (Section~\ref{sec:notation}), we formally define $\multitsphome$ and $\singletsphome$ policies (Section~\ref{sec:policies}), present best-case (Section~\ref{sec-comparison_truck_bestcase}) and worst-case (Section~\ref{sec-comparison_truck_worstcase}) drone-impact ratio analysis as well as competitive ratio analysis results (Section~\ref{sec:competitive}).
In our analysis, we proceed as follows. First, we derive bounds on the ratios -- upper bounds for $\bar{\omega}(ALG)$ and  $\sigma(ALG)$, and a lower bound for $\underline{\omega}(ALG)$. We then present extreme instances -- worst-case for $\bar{\omega}(ALG)$ and $\sigma(ALG)$, and best-case for $\underline{\omega}(ALG)$ -- that establish the tightness of these bounds.

\subsection{Notation and basics}\label{sec:notation}

First, we introduce a related problem -- \textit{\tspmnnamelong\ (\tspmnname)} --  which is
essential for bounding the optimal objective value of RDP$^*$ and for analyzing policies in the subsequent sections.
After discussing some basic relations,  we define the \truckonly\ policy.
This simple policy provides the baseline for the drone-impact ratios.
Finally, we define specific graphs used in our extreme examples below.

We state \tspmnname\  as follows:
\begin{definition}[$\tsp_{m,n}(W)$, $S^*_{m,n}(W)$]
	\label{defi:tspmn}
	Given graph $G=(V,E,c)$ with a designated depot $\depot$, $C=V\setminus \{\depot\}$, positive edge labels $c$, a drone speed $\alpha$, nonnegative integers $m$ and $n$ with $m+n\geq 1$, and a node set $W\subseteq C$, \tspmnname\ is to
	find a set of $m+n$ tours $S^*=\ttours\cup \dtours$ in $G$ that start and end at the depot and visit all nodes in $W$,
	such that, if each tour in $\ttours=\{\ttour_i:i=1,\ldots,m\}$ is performed by a truck and each tour in  $\dtours=\{\dtour_j:j=1,\ldots,n\}$ is performed by a drone the overall makespan of these tours is minimized. We denote the respective \textit{optimal} makespan as $\tsp_{m,n}(W)$ and an optimal solution as $S^*_{m,n}(W)$.
\end{definition}

Note that in the notation introduced in Definition~\ref{defi:tspmn} and in the remainder of the paper, we simplify notation by omitting explicit dependence on the underlying instance or graph -- by writing, e.g., $W$ instead of $W(G)$ or $W(I)$ whenever the context is clear.

Lemma~\ref{lemma:basic} summarizes some basic relations for $\tsp_{m,n}(W)$:
\begin{restatable}{lemma}{lemmabasic}\label{lemma:basic}
	The following relations hold for all graphs $G=(V,E,c)$ with $V=C\cup\{\depot\}$, all drone speeds $\alpha>0$, all sets $W\subseteq C$ and all $m,n\ge 0, m+n\geq 1$.

	\noindent\textit{[Non-deterioration w.r.t. less nodes]}
	\begin{align}
		\tsp_{m,n}(W')\le \tsp_{m,n}(W) &  & \forall W'\subseteq W \label{eq:morenodes}
	\end{align}
	\textit{[Non-deterioration w.r.t. more vehicles]}
	\begin{align}
		\tsp_{m',n}(W)\le \tsp_{m,n}(W) &  & \forall m'\geq m \label{eq:moretrucks} \\
		\tsp_{m,n'}(W)\le \tsp_{m,n}(W) &  & \forall n'\geq n \label{eq:moredrones}
	\end{align}
	\textit{[Reallocation of tours to trucks or drones]}
	\begin{align}
		\tsp_{m',0}(W)\le \left(\left\lceil \frac{m}{m'}\right \rceil+\alpha\left\lceil \frac{n}{m'}\right \rceil\right)\tsp_{m,n}(W)           &  & \forall m'\geq 1 \label{eq:redistribution_to_trucks} \\
		\tsp_{0,n'}(W)\le \left(\left\lceil \frac{n}{n'}\right \rceil+\frac{1}{\alpha}\left\lceil \frac{m}{n'}\right \rceil\right)\tsp_{m,n}(W) &  & \forall n'\geq 1 \label{eq:redistribution_to_drones}
	\end{align}
\end{restatable}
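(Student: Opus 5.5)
All five relations follow by turning an optimal solution of one \tspmnname\ instance into a feasible solution of another and bounding its makespan. Throughout, $S^*_{m,n}(W)=\ttours\cup\dtours$ denotes an optimal solution with makespan $T:=\tsp_{m,n}(W)$. Two facts will be used repeatedly.

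\emph{Concatenation.} Closed tours all pass through $\depot$, so several of them can be concatenated at the depot into a single closed tour through $\depot$. That tour visits the union of their node sets, and its length is the sum of their lengths.

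\emph{Speed conversion.} A truck tour of duration at most $T$ has length at most $T$, so a drone needs at most $T/\alpha$ for it. A drone tour of duration at most $T$ has length at most $\alpha T$, so a truck needs at most $\alpha T$ for it.

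For \eqref{eq:morenodes}, note that $S^*_{m,n}(W)$ already visits every node of $W'\subseteq W$. It is therefore feasible for $W'$ with makespan $T$, and optimality of $\tsp_{m,n}(W')$ gives the claim.

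For \eqref{eq:moretrucks} and \eqref{eq:moredrones}, I keep the tours of $S^*_{m,n}(W)$ and give each additional vehicle the trivial tour $(\depot)$ of length $0$. The paper explicitly allows vehicles not to leave the depot. The resulting solution is feasible with makespan $T$.

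For \eqref{eq:redistribution_to_trucks}, I distribute the $m$ truck tours round-robin among the $m'$ trucks, so each truck receives at most $\lceil m/m'\rceil$ of them. I distribute the $n$ drone tours round-robin as well, so each truck receives at most $\lceil n/m'\rceil$ of those. Each truck concatenates its assigned tours at $\depot$. Each truck tour contributes at most $T$, and each drone tour, now driven by a truck, contributes at most $\alpha T$. Every truck's tour therefore has length at most $(\lceil m/m'\rceil+\alpha\lceil n/m'\rceil)T$. All nodes of $W$ are still covered, which gives the bound.

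For \eqref{eq:redistribution_to_drones}, the argument is symmetric. Each of the $n'$ drones receives at most $\lceil n/n'\rceil$ drone tours, each taking at most $T$, and at most $\lceil m/n'\rceil$ truck tours, each taking at most $T/\alpha$ when flown.

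None of these steps is a real obstacle. The only point that needs care is the degenerate case where $m=0$ or $n=0$, so that one tour set is empty. There the corresponding ceiling term is $0$ and the round-robin assignment still works. The assumptions $m'\ge 1$ in \eqref{eq:redistribution_to_trucks} and $n'\ge1$ in \eqref{eq:redistribution_to_drones} guarantee that there is at least one vehicle to receive tours.
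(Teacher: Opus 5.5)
Your proposal is correct and follows the paper's own argument: idle extra vehicles for \eqref{eq:moretrucks}--\eqref{eq:moredrones}, and an even reallocation of the optimal tours of $\tsp_{m,n}(W)$ for \eqref{eq:redistribution_to_trucks}--\eqref{eq:redistribution_to_drones}, with the tours concatenated at $\depot$ and rescaled by the speed ratio. You simply spell out the details that the paper's two-line proof leaves implicit, including the trivial relation \eqref{eq:morenodes}, which that proof does not mention.
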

\begin{proof}
	See Online Appendix~\ref{proof:lemma-basic}.
\end{proof}

The next lemma provides a lower bound on the optimal objective value of RDP$^*$.

\begin{restatable}{lemma}{lemboundopt}\label{lem-boundOPT}
	For any RDP$^*$ instance $I^*$ we have
	\begin{align*}
		 & \OPThome(I) \ge \max \{\tsp_{\nrtrucks,\nrdrones}(C), \tsp_{\nrtrucks,0}(D)\}
	\end{align*}
\end{restatable}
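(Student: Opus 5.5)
We prove the two inequalities separately and then take the maximum. Fix an optimal solution of RDP$^*$ for $I^*$, consisting of truck tours $\{\ttour_i:i=1,\ldots,\nrtrucks\}$ and drone tours $\{\dtour_j:j=1,\ldots,\nrdrones\}$, with makespan $\OPThome(I^*)$. The strategy is to show that this solution, or a part of it, is a feasible solution of two different \tspmnname\ instances, so that each optimal \tspmnname\ value is at most $\OPThome(I^*)$.

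For the first bound, $\OPThome(I^*)\ge\tsp_{\nrtrucks,\nrdrones}(C)$, we argue as follows. All tours start and end at $\depot$ by condition (i). By condition (ii), every node of $C$ is visited by some truck or drone tour. Hence the collection $\ttours\cup\dtours$, with the truck tours performed by trucks and the drone tours performed by drones, is feasible for \tspmnname\ with $m=\nrtrucks$, $n=\nrdrones$ and $W=C$ in the sense of Definition~\ref{defi:tspmn}. Its makespan is computed with the same travel times, namely $c$ for trucks and $c^d=c/\alpha$ for drones. It therefore equals $\OPThome(I^*)$, and optimality of $\tsp_{\nrtrucks,\nrdrones}(C)$ gives the inequality.

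For the second bound, $\OPThome(I^*)\ge\tsp_{\nrtrucks,0}(D)$, we discard the drone tours and keep only the truck tours $\{\ttour_i\}$. By condition (iii), every $d\in D$ lies on some truck tour, and all truck tours start and end at $\depot$. So $\{\ttour_i\}$ is feasible for \tspmnname\ with $m=\nrtrucks$, $n=0$ and $W=D$, which requires $m+n=\nrtrucks\ge1$. Its makespan is the maximum truck tour length. This is at most the overall makespan $\OPThome(I^*)$, since dropping tours can only reduce the maximum.

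The only delicate point is the degenerate case $D=\emptyset$. There we read $\tsp_{\nrtrucks,0}(\emptyset)=0$: all trucks stay at the depot, which matches the remark that vehicles need not depart. The bound then holds trivially. We also note that the statement writes $\OPThome(I)$ for $\OPThome(I^*)$, and we treat the two as the same quantity. Apart from these points, we expect no real obstacle; the argument is a direct feasibility observation.
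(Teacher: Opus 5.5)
Your proof is correct and follows essentially the same argument as the paper, which states only that every node of $C$ must be visited by some vehicle and every damaged node by a truck, so an optimal RDP$^*$ solution is feasible for MultiTSP on $C$ and its truck tours are feasible for MultiTSP on $D$. You spell out this feasibility transfer explicitly. Your handling of $D=\emptyset$ and of the $\mathrm{OPT}(I)$ versus $\mathrm{OPT}(I^*)$ notation is a harmless addition.
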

\begin{proof}
	See Online Appendix~\ref{proof:lem-boundOPT}.
\end{proof}

Based on the definition of the \tspmnname\ we can now state the \truckonly\ policy with makespan $\tsp_{\nrtrucks,0}(C)$ as: Given an instance $I=(\nrtrucks,\nrdrones,\alpha,G,D)$ with $C=V\setminus \{\depot\}$ of RDP solve \tspmnname\ using $\nrtrucks$ trucks and no drones and serve the obtained tours by the truck.

To conclude this section, we now define several specific graphs used in our examples along with several basic relations on these graphs summarized in Lemmata~\ref{lemma:relations_graphs} and \ref{lemma:relations_graphs2}.

\begin{definition}[$\tilde{\mathcal{G}}(n, d)$]
	We define $\tilde{\mathcal{G}}(n, d)$ with $n \in \mathbb{N}$ and $d \in \mathbb{R}_{> 0}$ as a \textit{star graph}, i.e.,  an undirected graph with node set $W \cup \set{\depot}$, $|W|=n$, and edges $E = \set{\set{\depot, v} \colon v \in W}$ with $c(\depot,v_i)=d$ for all $v\in D$.
\end{definition}

\begin{definition}[$\tilde{\mathcal{G}}(n_1, n_2, d_1, d_2)$]
	We define $\tilde{\mathcal{G}}(n_1, n_2, d_1, d_2)$ with $n_1, n_2 \in \mathbb{N}$ and $d_1, d_2 \in \mathbb{R}_{>0}$, as a star graph with node set $W=W_1 \cup W_2 \cup \set{\depot}$, where $\abs{W_1} = n_1$ and $\abs{W_2} = n_2$, and edge set $E = \set{\set{\depot, v} \colon v \in W_1 \cup W_2}$, with  $c(\depot,v)=d_1$ $\forall v\in W_1$, $c(\depot,v)=d_2$ $\forall v\in W_2$.
\end{definition}

\begin{figure}
	\centering
	\begin{subfigure}[b]{0.35\textwidth}
		\centering
		\includegraphics[scale=0.6]{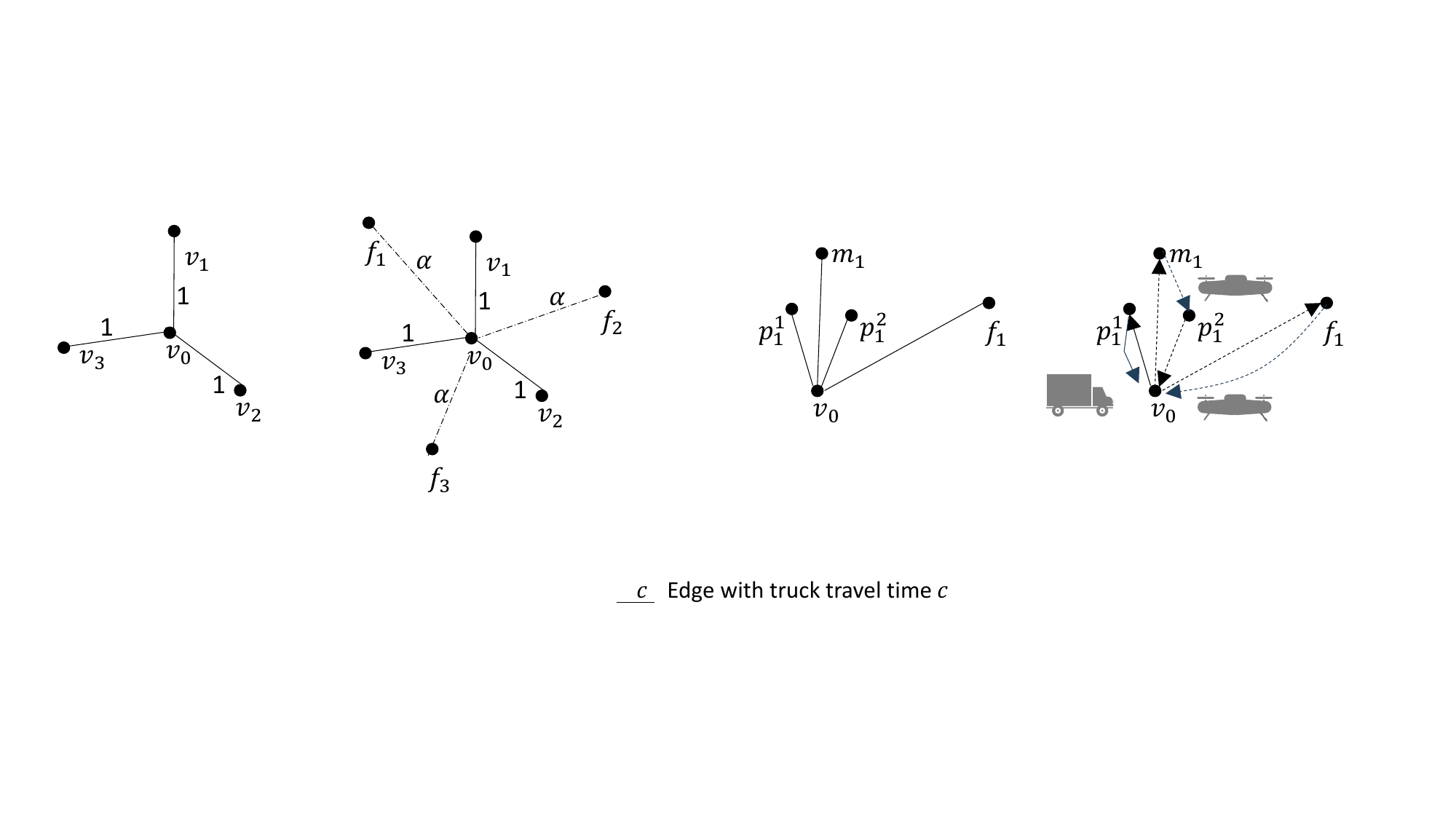}
		\caption{
			\footnotesize{$\tilde{\mathcal{G}}(3,1)$}
		}
		\label{fig:star1}
	\end{subfigure}
	\begin{subfigure}[b]{0.6\textwidth}
		\centering
		\includegraphics[scale=0.6]{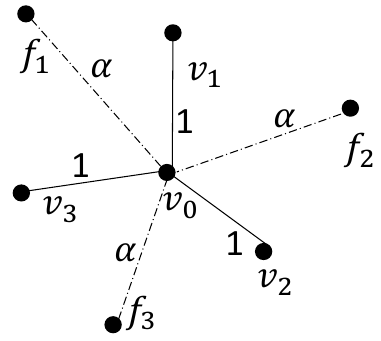}
		\caption{
			\footnotesize{$\tilde{\mathcal{G}}(3, 3,1, \alpha)$}
		}
		\label{fig:star2}
	\end{subfigure}
	\caption{\footnotesize{Examples of star graphs}}\label{fig:stargraph}
\end{figure}

Figure~\ref{fig:stargraph} illustrates graphs $\tilde{\mathcal{G}}(3, 1)$ and $\tilde{\mathcal{G}}(3,3,1,\alpha)$.

\begin{restatable}{lemma}{lemmarelationsgraphs}\label{lemma:relations_graphs}
	For graphs $\tilde{\mathcal{G}}(n,d)$ with the set of non-depot nodes $W$ and $\tilde{\mathcal{G}}(n_1, n_2,d_1, d_2)$ with the sets of non-depot nodes $W_1$ (for level-1 nodes) and $W_2$ (for level-2 nodes), $\nrtrucks\geq 1$ trucks, $\nrdrones\geq 1$ drones, and drone speed $\alpha>0$, the following relations hold

	\noindent\textit{[Truck-only and drone-only makespan]}
	\begin{align}
		\tsp_{1,0}(W')=2d|W'|                                                             &  & \forall W'\subseteq W\label{eq:one_truck_star}                         \\
		\tsp_{0,1}(W')=\frac{2}{\alpha}d|W'|                                              &  & \forall W'\subseteq W\label{eq:one_drone_star}                         \\
		\tsp_{\nrtrucks,0}(W)=2d\left\lceil\frac{n}{\nrtrucks}\right\rceil                &  & \label{eq:truck_only_star}                                             \\
		\tsp_{0,\nrdrones}(W)=\frac{2}{\alpha}d\left\lceil\frac{n}{\nrdrones}\right\rceil &  & \label{eq:drone_only_star}                                             \\
		\tsp_{1,0}(W'_1\cup W'_2)=2d_1|W'_1|+2d_2|W'_2|                                   &  & \forall W'_1\subseteq W_1, W'_2\subseteq W_2\label{eq:one_truck_star2} \\
		\tsp_{0,1}(W'_1\cup W'_2)=\frac{2}{\alpha}d_1|W'_1|+\frac{2}{\alpha}d_2|W'_2|     &  & \forall W'_1\subseteq W_1, W'_2\subseteq W_2\label{eq:one_drone_star2}
	\end{align}
\end{restatable}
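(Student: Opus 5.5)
The plan is to reduce every identity in Lemma~\ref{lemma:relations_graphs} to one structural fact about star graphs. In a star, the only path from $\depot$ to a leaf $v$ is the edge $\{\depot,v\}$. Every other leaf is reachable only through $\depot$, so $c(v,w)=c(\depot,v)+c(\depot,w)$ for distinct leaves $v,w$.

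\textbf{Single vehicle, lower bound.} Take any closed tour from $\depot$ that visits a set $W'$ of leaves. Each leaf $v$ is a degree-one node, so the tour must enter and leave $v$ along $\{\depot,v\}$. That forces at least $2c(\depot,v)$ of length per visited leaf, and these contributions are edge-disjoint. The tour length is therefore at least $\sum_{v\in W'}2c(\depot,v)$.

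\textbf{Single vehicle, upper bound.} The tour $(\depot,v_1,\depot,v_2,\depot,\ldots)$ attains this length exactly. This yields \eqref{eq:one_truck_star} and \eqref{eq:one_truck_star2}. Dividing travel times by $\alpha$ gives the drone versions \eqref{eq:one_drone_star} and \eqref{eq:one_drone_star2}. (I read the condition ``$v\in D$'' in the definition of $\tilde{\mathcal{G}}(n,d)$ as ``$v\in W$''.)

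\textbf{Multiple vehicles.} For \eqref{eq:truck_only_star}, consider any solution of $\tsp_{\nrtrucks,0}(W)$. Assign each of the $n$ leaves to one truck that visits it. By pigeonhole, some truck is responsible for at least $\lceil n/\nrtrucks\rceil$ distinct leaves. By the single-vehicle bound \eqref{eq:one_truck_star}, that truck travels at least $2d\lceil n/\nrtrucks\rceil$. For the matching upper bound, partition $W$ into $\nrtrucks$ groups of size at most $\lceil n/\nrtrucks\rceil$, some possibly empty, and let each truck serve its group with the star tour above. The drone case \eqref{eq:drone_only_star} is identical with speed $\alpha$.

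The only point needing care is the lower-bound step. Vehicles may revisit nodes and shortest paths are implicit in the abbreviated tour notation, so I would argue directly on the actual edge sequence in $G$ rather than on the abbreviated representation. Each leaf has degree one, so each visit is an excursion $\depot\to v\to\depot$ of cost $2c(\depot,v)$, and distinct leaves require distinct excursions. With that in hand, the rest is routine arithmetic.
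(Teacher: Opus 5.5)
Your proof is correct and follows essentially the same route as the paper. The paper likewise uses that every move between non-depot nodes of a star passes through the depot, so a single tour over $W'$ costs $\sum_{v\in W'}2c(v_0,v)$ regardless of order, and then applies pigeonhole over an even distribution of nodes for \eqref{eq:truck_only_star} and \eqref{eq:drone_only_star}. The only difference is minor: the paper invokes the triangle inequality to restrict attention to tours that visit each node once, whereas you bound arbitrary walks directly via the degree-one excursion argument, which handles revisits a little more cleanly (and your reading of ``$v\in D$'' as ``$v\in W$'' in the definition is right).
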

\begin{proof}
	See Online Appendix~\ref{proof:lemma:relations_graphs}.
\end{proof}

\begin{lemma}\label{lemma:relations_graphs2}
	For graph $\tilde{\mathcal{G}}(n_1, n_2,1, \alpha)$ with the sets of non-depot nodes $W_1$ (for level-1 nodes) and $W_2$ (for level-2 nodes), if there is $f\in \mathbb{N}$ such that $n_1=f\nrtrucks$ and $n_2=f\nrdrones$ we have
	\begin{equation} \label{eq:mtsp_two_levelstar}
		\tsp_{\nrtrucks,\nrdrones}(W_1\cup W_2)= 2f
	\end{equation}
\end{lemma}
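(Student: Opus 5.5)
We prove \eqref{eq:mtsp_two_levelstar} by matching an upper bound, from an explicit solution, with a lower bound, from a counting argument on how much each vehicle can cover within a given time.

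\emph{Upper bound.} Partition $W_1$ into $\nrtrucks$ groups of $f$ level-1 nodes and assign one group to each truck. By \eqref{eq:one_truck_star2}, with $d_1=1$ and $W'_2=\emptyset$, each truck tour visits its $f$ nodes one after another, returning to the depot in between, and has length $2f$. Partition $W_2$ into $\nrdrones$ groups of $f$ level-2 nodes and assign one group to each drone. By \eqref{eq:one_drone_star2}, with $d_2=\alpha$, each drone tour takes $\frac{2}{\alpha}\alpha f=2f$. Every node of $W_1\cup W_2$ is visited and the makespan is $2f$, so $\tsp_{\nrtrucks,\nrdrones}(W_1\cup W_2)\le 2f$.

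\emph{Lower bound.} Consider any feasible solution with makespan $T$. Because the graph is a star, every node $v$ is reached only from $\depot$ and the vehicle must return to $\depot$. Each visit to a distinct node of $W$ therefore costs a closed excursion $\depot\to v\to\depot$, and a tour's duration is at least the sum of $2c(\depot,v)/\text{speed}$ over the distinct nodes it visits. This is exactly the content of \eqref{eq:one_truck_star2} and \eqref{eq:one_drone_star2} applied to a single tour.

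Assign each node to one vehicle that visits it. Write $a_i,b_i$ for the numbers of level-1 and level-2 nodes assigned to truck $i$, and $a'_j,b'_j$ for the same quantities for drone $j$. Then
\[
T\ge 2a_i+2\alpha b_i \quad\text{and}\quad T\ge \tfrac{2}{\alpha}a'_j+2b'_j .
\]
Define the \emph{weight} of a node as $1$ for level 1 and $\alpha$ for level 2. The total weight to be covered is $f\nrtrucks+\alpha f\nrdrones$. Within time $T$, a truck covers weight $a_i+\alpha b_i\le T/2$. A drone covers $a'_j+\alpha b'_j$; it is not immediately clear this is at most $\alpha T/2$, so we bound it separately. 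Since $\tfrac{2}{\alpha}a'_j+2b'_j\le T$, we get $a'_j+\alpha b'_j\le \tfrac{\alpha}{2}T$ as needed. Summing over all vehicles,
\[
f\nrtrucks+\alpha f\nrdrones\le \nrtrucks\tfrac{T}{2}+\nrdrones\tfrac{\alpha T}{2},
\]
which gives $T\ge 2f$.

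The main obstacle is justifying rigorously that a vehicle's tour length is at least the sum of the round trips to its distinct assigned nodes, since tours may revisit nodes. I would argue this via the star structure: between consecutive first-visits of distinct leaves the walk must pass through $\depot$. The weighting choice, measuring drone effort in truck-time units, is what makes the aggregate inequality work.
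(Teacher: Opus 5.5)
Your proof is correct and follows essentially the paper's argument: the paper also pairs the explicit makespan-$2f$ solution with a lower bound obtained by adding the truck-load inequality scaled by $\nrtrucks$ and the drone-load inequality scaled by $\alpha\nrdrones$, which is exactly your weighting of drone effort by $\alpha$. The differences are cosmetic: you sum per-vehicle bounds directly instead of arguing by contradiction on average tour lengths, and you justify the star-graph tour-length bound explicitly rather than appealing to the triangle inequality.
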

\begin{proof}
	We prove relation~\eqref{eq:mtsp_two_levelstar} in two steps.
	First, observe that a feasible solution, where each truck travels over $f$ nodes in $W_1$ and each drone travels over $f$ nodes in $W_2$ has the makespan of $2f$.
	Consider for example with $\nrtrucks=\nrdrones=3, f=1$ and nodes $W_1$ and $W_2$ as depicted in Figure~\ref{fig:star2}:
	In a feasible solution with the makespan of $2f=2$, drones perform tours $(\depot,f_i,\depot), i\in\{1,2,3\}$ and trucks perform tours $(\depot,v_j,\depot), j\in\{1,2,3\}$.

	Second, we prove that the lower bound on $\tsp_{\nrtrucks,\nrdrones}(W_1\cup W_2)$ is $2f$. For this, denote nodes $W^t_1\subseteq W_1$ and $W^t_2\subseteq W_2$ as the nodes visited by truck tours in some feasible solution of the respective \tspmnname. Observe that, by the triangle inequality, it suffices to consider tours that visit each node in $W'_1\cup W'_2, n'=|W'_1|+|W'_2|,$ exacly once while solving \tspmnname. Recall that the maximum over some numbers cannot be lower than the average of these numbers. Therefore, we bound $\tsp_{\nrtrucks,\nrdrones}(W_1\cup W_2)$ by the \textit{average} length of the vehicle tours; using \eqref{eq:one_truck_star2} and \eqref{eq:one_drone_star2}, we receive:
	\begin{align*}
		\tsp_{\nrtrucks,\nrdrones}(W_1\cup W_2)\geq \max\left\{\frac{2|W^t_1|+2\alpha|W^t_2|}{\nrtrucks}, \frac{\frac{2}{\alpha}\left(f\nrtrucks-|W^t_1|\right)+\frac{2}{\alpha}\alpha\left(f\nrdrones-|W^t_2|\right)}{\nrdrones}\right\}.
	\end{align*}
	Prove by contradiction. Assume that $\tsp_{\nrtrucks,\nrdrones}(W_1\cup W_2)<2f$. For this, the average lengths of the truck and drone tours of the respective optimal solution should be less than $2f$:

	\begin{align}
		\frac{2|W^t_1|+2\alpha|W^t_2|}{\nrtrucks}<2f \label{eq:hilf1} \\
		\frac{\frac{2}{\alpha}\left(f\nrtrucks-|W^t_1|\right)+\frac{2}{\alpha}\alpha\left(f\nrdrones-|W^t_2|\right)}{\nrdrones}<2f\label{eq:hilf2}
	\end{align}
	Multiply \eqref{eq:hilf1} by $\nrtrucks$ and \eqref{eq:hilf2} by $\alpha \nrdrones$ and add both inequalities, we receive:
	\begin{align*}
		2f \nrtrucks + 2 \alpha f \nrdrones
		> 2|W^t_1|+2\alpha|W^t_2|+2\left(f\nrtrucks-|W^t_1|\right)+2\alpha\left(f\nrdrones-|W^t_2|\right)
		=  2f \nrtrucks + 2 \alpha f \nrdrones
	\end{align*}
	We conclude that the bound $\tsp_{\nrtrucks,\nrdrones}(W_1\cup W_2)\geq 2f$ is proven by contradiction. Together with the feasible solution of makespan $2f$ provided above, $\tsp_{\nrtrucks,\nrdrones}(W_1\cup W_2)= 2f$.
\end{proof}

\subsection{RDP policies: \multitsphome\ and ~\singletsphome}\label{sec:policies}
This section introduces the two policies central to our analysis: ~\multitsphome\ and ~\singletsphome. Lemmata~\ref{lem-boundMultiTSP}, \ref{lem-singletsphome} and \ref{lem-singlemultiple} establish upper bounds on the policies' results.
We analyze both policies as deterministic policies.

In \multitsphome, vehicles first follow the routes of $S^*_{\nrtrucks,\nrdrones}(C)$ and return to the depot. Let $D^d\subseteq D$ be the set of damaged nodes not visited by the truck tours $S^t$ in this stage. In the \secondary\ stage, trucks traverse tours $S^*_{\nrtrucks,0}(D^d)$.
That is, the makespan of \multitsphome\ is
\begin{align}
	\multitsphome(I)=\tsp_{\nrtrucks,\nrdrones}(C)+\tsp_{\nrtrucks,0}(D^d). \label{multitsphome-makespanformula}
\end{align}

\singletsphome~ consists of \textit{initial plan} and one \textit{replanning} event for each truck:
\begin{itemize}
	\item \textit{[First stage: Initial plan.]} Compute $S^*_{\nrtrucks,0}(C)=\{s^*_i:i\in\{1,\ldots,\nrtrucks\}\}$ with makespan $\tsp_{\nrtrucks,0}(C)$.
	      Assign one truck to each tour in $S^*_{\nrtrucks,0}(C)$ and distribute the drones as evenly as possible, so that each tour receives either
	      $\left\lceil \frac{\nrdrones}{\nrtrucks} \right\rceil$ or $\left\lfloor \frac{\nrdrones}{\nrtrucks} \right\rfloor$ drones.

	      Now split each tour $s^*_i=(v_{(0)}=\depot,v_{(1)},v_{(2)},\ldots,v_{(n_i)}=v_{0})$ into up to $\nrvehicles_i$ segments, which will determine the tours of the vehicles assigned to $s^*_i$ built in a makespan minimizing way; the truck is assigned the segment adjacent to the depot and the drones serve the remaining segments. Here, $n_i$ denotes the number of non-depot nodes in  tour $s^*_i$ and
	      $\nrvehicles_i$ denotes the total number of vehicles assigned to $s^*_i$.
	      Specifically, we partition the tour in up to $\nrvehicles_i$ segments $\segment_i^l:=(v_{(j_l)},\ldots, v_{(j_{l+1} -1)})$ and complete the segments to tours
	      $\segmenttour_i^1:=(\depot,v_{(1)},\ldots, v_{(j_{2}- 1)},\depot)$,
	      $\segmenttour_i^l:=(\depot,v_{(j_{l + 1} - 1)},\ldots, v_{(j_l)},\depot)$ for $l=2,\ldots, \nrvehicles_i-1$, and
	      $\segmenttour_i^{\nrvehicles_i}:=(\depot, v_{(n_i)}, \ldots, v_{(j_{k_i})},\depot)$,
	      such that the makespan of serving tour $\segmenttour_i^1$ by truck and the other tours by drones, i.e., $\max \{ c(\segmenttour_i^1), \max_{l=2,\ldots, \nrvehicles_i} \frac{ c(\segmenttour_i^l)}{\alpha} \}$, is minimum.

	\item \textit{[Second stage: Replanning]}. For each $i$, replanning occurs when the truck reaches the last node $v_{(j_1)}$ of its segment.
	      Let $\Ddrone_i$ denote the set of damaged nodes in  $s^*_i$ not yet visited by the truck, and $C^d_i$ the set of nodes in $s^*_i$ not yet visited by any vehicle.
	      Starting from $v_{(j_1)}$ truck $i$ then follows a minimum-length tour that visits all nodes in $\Ddrone_i\cup C^d_i$ and returns to the depot.
	      The drones complete their tours.
\end{itemize}

\begin{lemma}\label{lem-boundMultiTSP}
	For any RDP instance $I$, it holds
	\begin{align}
		\multitsphome(I) \le \tsp_{\nrtrucks,\nrdrones}(C) + \tsp_{\nrtrucks,0}(D). \label{eq:ub_multitsp}
	\end{align}
\end{lemma}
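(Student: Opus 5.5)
The bound follows directly from the makespan formula \eqref{multitsphome-makespanformula}, combined with the monotonicity property \eqref{eq:morenodes} of Lemma~\ref{lemma:basic}. The plan is to account for the two stages of \multitsphome\ separately and compare each to the corresponding term on the right-hand side of \eqref{eq:ub_multitsp}.

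\textbf{First stage.} All $\nrtrucks+\nrdrones$ vehicles follow the tours of $S^*_{\nrtrucks,\nrdrones}(C)$. By Definition~\ref{defi:tspmn}, the longest of these tours (truck tours at speed $1$, drone tours at speed $\alpha$) has duration $\tsp_{\nrtrucks,\nrdrones}(C)$. All vehicles are therefore back at the depot by that time; trucks that return earlier simply wait. Every node of $C$ has now been visited by some vehicle, so the status of every node is known. In particular, the set $D^d$ of damaged nodes not visited by any truck is fully determined at the start of the second stage.

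\textbf{Second stage.} The trucks execute $S^*_{\nrtrucks,0}(D^d)$, which takes $\tsp_{\nrtrucks,0}(D^d)$ time. This yields \eqref{multitsphome-makespanformula}. Feasibility should be noted explicitly: each node of $C$ is visited in stage one, and each damaged node is visited by a truck either in stage one or, if it lies in $D^d$, in stage two. If $D^d=\emptyset$, we adopt the convention $\tsp_{\nrtrucks,0}(\emptyset)=0$, since the trucks stay at the depot.

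\textbf{Comparison.} Since $D^d\subseteq D$ by construction, \eqref{eq:morenodes} of Lemma~\ref{lemma:basic} gives $\tsp_{\nrtrucks,0}(D^d)\le \tsp_{\nrtrucks,0}(D)$. Substituting this into the makespan formula gives \eqref{eq:ub_multitsp}.

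The only step needing care is the handling of edge cases. Specifically, the second-stage term must be well defined when $D^d$ or $D$ is empty, where the value $0$ is consistent with monotonicity. One must also confirm that no extra waiting time is incurred: the second stage starts exactly when the last first-stage vehicle returns, at time $\tsp_{\nrtrucks,\nrdrones}(C)$. Everything else is immediate.
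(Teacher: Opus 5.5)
Your proof is correct and matches the paper's, which likewise takes the bound immediately from the makespan formula \eqref{multitsphome-makespanformula}, using $D^d\subseteq D$ and the monotonicity \eqref{eq:morenodes}. Your extra remarks on feasibility, on the second stage starting at time $\tsp_{\nrtrucks,\nrdrones}(C)$, and on the convention $\tsp_{\nrtrucks,0}(\emptyset)=0$ simply spell out what the paper leaves implicit.
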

\begin{proof}
	Immediate, for \eqref{multitsphome-makespanformula} and \eqref{eq:morenodes}.
\end{proof}

\begin{lemma}\label{lem-singletsphome}
	For any RDP instance $I$, it holds
	\begin{align}
		\singletsphome(I)\le \tsp_{\nrtrucks,0}(C). \label{eq:ub_singletsp}
	\end{align}
\end{lemma}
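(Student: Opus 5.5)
Since the makespan of \singletsphome\ is the maximum over the trucks and drones of their completion times, and the vehicles assigned to different tours $s^*_i$ never interact, I would fix one tour $s^*_i=(v_{(0)}=\depot,v_{(1)},\ldots,v_{(n_i)}=\depot)$ of $S^*_{\nrtrucks,0}(C)$ and show that every vehicle assigned to it returns to the depot by time $c(s^*_i)\le \tsp_{\nrtrucks,0}(C)$. Taking the maximum over $i$ then gives \eqref{eq:ub_singletsp}.

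\emph{Drones.} Consider first the drone tours. The partition into segments minimizes $\max\{c(\segmenttour_i^1),\max_{l\ge 2} c(\segmenttour_i^l)/\alpha\}$. One admissible choice gives the whole tour to the truck and leaves the drone segments empty, which yields the value $c(s^*_i)$. Hence the optimum is at most $c(s^*_i)$, and every drone finishes by time $c(s^*_i)$. Empty segments are permitted because the tour is split into \emph{up to} $\nrvehicles_i$ segments and vehicles need not depart.

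\emph{Truck: setup.} The truck needs more care. It reaches $v_{(j_2-1)}$, the last node of its segment, at time $\tau=c(\depot,v_{(1)},\ldots,v_{(j_2-1)})$, which is the length of the prefix of $s^*_i$. At that moment the nodes of $s^*_i$ still requiring a truck visit, namely $\Ddrone_i\cup C^d_i$, all lie among $v_{(j_2)},\ldots,v_{(n_i-1)}$. Every earlier node has already been visited by the truck itself.

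\emph{Truck: bound.} The remaining suffix path $(v_{(j_2-1)},v_{(j_2)},\ldots,v_{(n_i-1)},\depot)$ visits all of these nodes and ends at the depot. The replanned tour is a minimum-length walk from $v_{(j_2-1)}$ that visits $\Ddrone_i\cup C^d_i$ and returns to $\depot$, so its length is at most the suffix length. By the shortest-path convention and the triangle inequality, skipping nodes does not lengthen the walk. Hence the truck's completion time is at most prefix plus suffix, which equals $c(s^*_i)$.

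\emph{Main obstacle.} The only delicate point is the precise definition of $\Ddrone_i\cup C^d_i$. I must make sure that nodes of $s^*_i$ covered by drones of \emph{other} tours, or nodes appearing on several tours, cause no trouble. The definitions restrict to nodes of $s^*_i$, and the suffix covers every node of $s^*_i$ outside the truck segment, so such nodes only shrink the set to be visited. The bound therefore still holds, using the monotonicity idea of \eqref{eq:morenodes}.
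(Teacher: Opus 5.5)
Your proof is correct and is simply the detailed unpacking of the paper's one-line argument that the bound is ``immediate from the definition of \singletsphome''. You bound each drone by comparing the optimal split with the trivial all-truck split (admissible because the tour is split into up to $\nrvehicles_i$ segments), and the truck by the prefix of $s^*_i$ plus its suffix, which is a feasible candidate for the replanned tour — exactly the details the paper leaves implicit.
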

\begin{proof}
	Immediate from the definition of \singletsphome.
\end{proof}

\begin{lemma}\label{lem-singlemultiple}
	For any RDP instance $I$ and any online deterministic policy, it holds that
	\begin{align}
		\ALG(I)\geq \max\{\tsp_{\nrtrucks,\nrdrones}(C),\tsp_{\nrtrucks,0}(D)\}. \label{eq:ub_singlemultitsp}
	\end{align}
\end{lemma}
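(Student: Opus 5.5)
The plan is to reduce the statement to Lemma~\ref{lem-boundOPT}. Any online deterministic policy $\ALG$, run on instance $I$, produces truck and drone tours in $G$. By the convention of Section~\ref{sec:basic_problem_UD}, these tours form a feasible solution of the full-information counterpart $I^*$, and $\ALG(I)$ is exactly the makespan of that solution. Hence $\ALG(I)\ge \OPThome(I^*)$, and Lemma~\ref{lem-boundOPT} gives $\OPThome(I^*)\ge\max\{\tsp_{\nrtrucks,\nrdrones}(C),\tsp_{\nrtrucks,0}(D)\}$. Chaining the two inequalities yields \eqref{eq:ub_singlemultitsp}.

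If one prefers not to rely on the appendix, the two terms can also be checked directly from feasibility.

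\emph{First term.} The $\nrtrucks$ truck tours and $\nrdrones$ drone tours realized by $\ALG$ all start and end at $\depot$. Together they visit every node of $C$, since $\ALG$ must inspect every village. So they form a feasible solution of \tspmnname\ with $m=\nrtrucks$, $n=\nrdrones$, $W=C$, whose makespan is $\ALG(I)$. Thus $\ALG(I)\ge\tsp_{\nrtrucks,\nrdrones}(C)$.

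\emph{Second term.} The realized truck tours alone start and end at $\depot$ and visit all of $D$, because every damaged node must be served by a truck. So they are feasible for \tspmnname\ with $m=\nrtrucks$, $n=0$, $W=D$. Their makespan is at most $\ALG(I)$, which gives $\ALG(I)\ge\tsp_{\nrtrucks,0}(D)$. If $D=\emptyset$, this term is $0$ under the natural convention and the bound is trivial.

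The only delicate point is one of modelling rather than mathematics. Waiting periods and vehicles that never leave the depot must not break the correspondence. Waiting only adds time, and a tour that never leaves the depot has length $0$, so both the direct argument and the reduction go through. No genuine obstacle is expected.
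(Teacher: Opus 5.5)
Your proposal is correct and takes the same route as the paper, whose proof is just ``Follows from Lemma~\ref{lem-boundOPT}'', i.e.\ $\ALG(I)\ge\OPThome(I^*)\ge\max\{\tsp_{\nrtrucks,\nrdrones}(C),\tsp_{\nrtrucks,0}(D)\}$. Your direct feasibility check of the two terms simply unpacks the one-line appendix proof of that lemma.
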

\begin{proof}
	Follows from Lemma~\ref{lem-boundOPT}.
\end{proof}

\subsection{Best-case drone-impact ratio analysis}
\label{sec-comparison_truck_bestcase}
Sections~\ref{sec:lowerbounds_omega_bestcase} and \ref{sec:upperbounds_omega_bestcase} establish the following lower and upper bounds on the best-case  drone-impact ratios:
\begin{align}
	\frac{1}{1+\alpha\left\lceil\frac{\nrdrones}{\nrtrucks}\right\rceil}\leq\underline{\omega}(\multitsphome)\leq \frac{1}{1+\alpha\frac{\nrdrones}{\nrtrucks}}  &                                                                \\
	\frac{1}{1+\alpha\left\lceil\frac{\nrdrones}{\nrtrucks}\right\rceil}\leq\underline{\omega}(\singletsphome)\leq \frac{1}{1+\alpha\frac{\nrdrones}{\nrtrucks}} & \text{\quad for }\frac{\nrdrones}{\nrtrucks}\in \mathbb{Z}^+_0
\end{align}

If the number of drones is a multiple of the number of trucks, i.e., $\frac{k^d}{k^t}\in \mathbb{Z}^+_0$, the best-case drone-impact ratios are tight:
\begin{align}
	\underline{\omega}(\multitsphome)= \underline{\omega}(\singletsphome)= \frac{1}{1+\alpha\left\lceil\frac{\nrdrones}{\nrtrucks}\right\rceil} \label{eq:tightbestratio_multitsp}
\end{align}

\subsubsection{Lower bounds on $\underline{\omega}(\ALG)$}
\label{sec:lowerbounds_omega_bestcase}
Lemma~\ref{lemma:lowerbound_omega_basecase_alg} establishes a lower bound for all deterministic online policies \ALG.

\begin{lemma}\label{lemma:lowerbound_omega_basecase_alg}
	For any $\alpha >0$, $\nrtrucks\in\mathbb{N}$, $\nrdrones\in\mathbb{N}$, and any deterministic online policy \ALG\ we have $\underline{\omega}(\ALG)\geq \frac{1}{1+\alpha\left\lceil\frac{\nrdrones}{\nrtrucks}\right\rceil}$.
\end{lemma}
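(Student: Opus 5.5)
We need, for every instance $I$, that $\ALG(I)\ge \tsp_{\nrtrucks,0}(C)/(1+\alpha\lceil \nrdrones/\nrtrucks\rceil)$. The plan is to chain two earlier results: first bound $\ALG(I)$ from below by the multi-vehicle TSP with all vehicles, then relate that quantity to the truck-only TSP via the reallocation inequality of Lemma~\ref{lemma:basic}.

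Step 1. By Lemma~\ref{lem-singlemultiple}, any deterministic online policy satisfies
\[
\ALG(I)\ge \tsp_{\nrtrucks,\nrdrones}(C).
\]
This holds because $\ALG$ produces a feasible RDP$^*$ solution, which visits all of $C$ with $\nrtrucks$ trucks and $\nrdrones$ drones.

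Step 2. Apply inequality~\eqref{eq:redistribution_to_trucks} with $W=C$, $m=\nrtrucks$, $n=\nrdrones$ and $m'=\nrtrucks$. Since $\lceil \nrtrucks/\nrtrucks\rceil=1$, this gives
\[
\tsp_{\nrtrucks,0}(C)\le \left(1+\alpha\left\lceil\frac{\nrdrones}{\nrtrucks}\right\rceil\right)\tsp_{\nrtrucks,\nrdrones}(C).
\]
Intuitively, each truck takes over its own tour plus the tours of at most $\lceil \nrdrones/\nrtrucks\rceil$ drones, driving each drone tour at $\alpha$ times the drone's flight time. The tours can be concatenated because they all start and end at $\depot$.

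Step 3. Combine the two bounds:
\[
\frac{\ALG(I)}{\tsp_{\nrtrucks,0}(C)}\ge \frac{\tsp_{\nrtrucks,\nrdrones}(C)}{\tsp_{\nrtrucks,0}(C)}\ge \frac{1}{1+\alpha\lceil \nrdrones/\nrtrucks\rceil}.
\]
The division is valid because $\tsp_{\nrtrucks,0}(C)>0$: $C$ is nonempty and at positive distance from $\depot$. Taking the infimum over all instances $I$ yields the lemma.

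There is no real obstacle, since the work is done by Lemma~\ref{lemma:basic}. The only thing to check is that the reallocation argument really gives the factor $1+\alpha\lceil \nrdrones/\nrtrucks\rceil$ and not something larger. This holds because drone tour durations scale by exactly $\alpha$ when the tour is driven by a truck.
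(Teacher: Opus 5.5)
Your proof is correct and follows the paper's argument: bound $\ALG(I)$ below by $\tsp_{\nrtrucks,\nrdrones}(C)$ (Lemma~\ref{lem-singlemultiple}), then apply the reallocation inequality~\eqref{eq:redistribution_to_trucks} with $m'=m=\nrtrucks$ and $n=\nrdrones$ to get $\tsp_{\nrtrucks,0}(C)\le\bigl(1+\alpha\lceil \nrdrones/\nrtrucks\rceil\bigr)\tsp_{\nrtrucks,\nrdrones}(C)$. Your remarks on the positivity of the denominator, and on drone tours scaling by exactly $\alpha$ when a truck drives them, are details the paper leaves implicit.
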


\begin{proof}Using Lemma~\ref{lemma:basic} we obtain
	\begin{align}
		\underline{\omega}(\ALG)
		\geq \inf_I\frac{\tsp_{\nrtrucks,\nrdrones}(C(I))}{\tsp_{\nrtrucks,0}(C(I))}
		\geq \inf_I \frac{\tsp_{\nrtrucks,\nrdrones}(C(I))}{(1+\alpha\left\lceil\frac{\nrdrones}{\nrtrucks}\right\rceil)\tsp_{\nrtrucks,\nrdrones}(C(I))}
		\geq  \frac{1}{1+\alpha\left\lceil\frac{\nrdrones}{\nrtrucks}\right\rceil}. \label{eq:bestroneimpact}
	\end{align}
\end{proof}

\subsubsection{Upper bounds on $\underline{\omega}(\multitsphome)$ and $\underline{\omega}(\singletsphome)$}
\label{sec:upperbounds_omega_bestcase}

\begin{lemma}\label{lemma:upperbound_omega_bestcase_multitsp}
	For any $\alpha >0$, $\nrtrucks\in\mathbb{N}$, $\nrdrones\in\mathbb{N}$, we have
	$\underline{\omega}(\multitsphome)\leq \frac{1}{1+\alpha\frac{\nrdrones}{\nrtrucks}}$.
\end{lemma}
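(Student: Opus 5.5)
The plan is to exhibit a single instance on which $\multitsphome$ attains exactly the ratio $\frac{1}{1+\alpha\frac{\nrdrones}{\nrtrucks}}$. Since $\underline{\omega}$ is an infimum over all instances, this instance immediately gives the upper bound. The natural candidate is the two-level star $\tilde{\mathcal{G}}(n_1,n_2,1,\alpha)$ with $n_1=\nrtrucks$ and $n_2=\nrdrones$, i.e.\ $f=1$ in Lemma~\ref{lemma:relations_graphs2}, and with no damaged nodes, $D=\emptyset$. This is a legitimate RDP instance because $D\subseteq C$ may be empty while $C=W_1\cup W_2$ is nonempty. The graph is also metric and planar.

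\emph{Step 1 (makespan of $\multitsphome$).} By \eqref{multitsphome-makespanformula}, $\multitsphome(I)=\tsp_{\nrtrucks,\nrdrones}(C)+\tsp_{\nrtrucks,0}(D^d)$. Since $D=\emptyset$, also $D^d=\emptyset$, so the second stage is empty and contributes $0$. By Lemma~\ref{lemma:relations_graphs2} with $f=1$, $\tsp_{\nrtrucks,\nrdrones}(C)=2$. Hence $\multitsphome(I)=2$, regardless of which optimal solution $S^*_{\nrtrucks,\nrdrones}(C)$ the policy selects.

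\emph{Step 2 (truck-only makespan).} I need the lower bound $\tsp_{\nrtrucks,0}(W_1\cup W_2)\ge 2\bigl(1+\alpha\frac{\nrdrones}{\nrtrucks}\bigr)$. I would argue exactly as in the proof of Lemma~\ref{lemma:relations_graphs2}. By the triangle inequality it suffices to consider solutions in which each leaf is visited by exactly one truck. By \eqref{eq:one_truck_star2}, the truck that visits $W_1'\cup W_2'$ has tour length at least $2|W_1'|+2\alpha|W_2'|$, because in a star every leaf visit requires traversing its depot edge twice. Summing over all trucks gives a total length of at least $2\nrtrucks+2\alpha\nrdrones$. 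The maximum tour length is at least the average, which is $2+2\alpha\frac{\nrdrones}{\nrtrucks}$.

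\emph{Step 3 (conclusion).} Combining the two steps,
\[
\underline{\omega}(\multitsphome)\le \frac{\multitsphome(I)}{\tsp_{\nrtrucks,0}(C(I))}\le \frac{2}{2\bigl(1+\alpha\frac{\nrdrones}{\nrtrucks}\bigr)}=\frac{1}{1+\alpha\frac{\nrdrones}{\nrtrucks}} .
\]
No step here is really an obstacle. The only point needing care is Step 2: one must justify the reduction to visiting each leaf once and use averaging rather than exact balancing. An exactly balanced split may be impossible when $\nrtrucks\nmid\nrdrones$, but this does not matter, since only a lower bound on the truck-only makespan is needed.
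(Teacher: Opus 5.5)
Your proof is correct and follows the paper's approach: a damage-free two-level star $\tilde{\mathcal{G}}(n_1,n_2,1,\alpha)$, Lemma~\ref{lemma:relations_graphs2} for the first-stage makespan, and an averaging bound for the truck-only makespan. The only difference is the choice of $f$: the paper takes $f=\nrtrucks$, i.e., $(\nrtrucks)^2$ level-1 and $\nrtrucks\nrdrones$ level-2 nodes, so that an even split exists and $\tsp_{\nrtrucks,0}(C)=2\nrtrucks+2\alpha\nrdrones$ holds with equality, whereas your $f=1$ instance only needs the averaging lower bound, which is all an upper bound on the infimum requires.
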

\begin{proof}
	Consider an instance $I$ with \textit{no damaged nodes} ($D=\emptyset$), in which the depot node $\depot$ and nodes $C$ form a graph $\tilde{\mathcal{G}}(\left(\nrtrucks\right)^2, \nrdrones\nrtrucks,1, \alpha)$, i.e., a graph with  $\left(\nrtrucks\right)^2$ level-1 nodes and $\nrdrones\nrtrucks$ level-2 nodes. From \eqref{eq:mtsp_two_levelstar},
	$\tsp_{\nrtrucks,\nrdrones}(C)= 2\nrtrucks$.

	We receive makespan of an optimal truck-only solution by evenly distributing the nodes among trucks and using \eqref{eq:one_truck_star2} (to recheck this argument, recall that the maximum over some numbers cannot be larger than the average over these numbers and compute the average tour length over the trucks): $\tsp_{\nrtrucks,0}(C)= 2\nrtrucks+2\alpha \nrdrones$.

	From these expressions for $\tsp_{\nrtrucks,\nrdrones}(C)$ and $\tsp_{\nrtrucks,0}(C)$, it follows:
	\begin{align}
		\underline{\omega}(\multitsphome)\leq\frac{\multitsphome(I)}{\tsp_{\nrtrucks,0}(C)}= \frac{\tsp_{\nrtrucks,\nrdrones}(C)}{\tsp_{\nrtrucks,0}(C)}=\frac{1}{1+\alpha\frac{\nrdrones}{\nrtrucks}}
	\end{align}
\end{proof}

\begin{restatable}{lemma}{lemmaupperboundimegabestcasesingletsp}\label{lemma:upperbound_omega_bestcase_singletsp}
	If $\frac{\nrdrones}{\nrtrucks}\in \mathbb{Z}^+_0$, for any $\alpha >0$, we have $\underline{\omega}(\singletsphome)\leq \frac{1}{1+\alpha\frac{\nrdrones}{\nrtrucks}}$.
\end{restatable}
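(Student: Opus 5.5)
We plan to exhibit a family of instances without damage whose ratio $\singletsphome(I)/\tsp_{\nrtrucks,0}(C)$ tends to $\frac{1}{1+\alpha q}$, where $q:=\frac{\nrdrones}{\nrtrucks}\in\mathbb{Z}^+_0$. Since $\underline{\omega}$ is an infimum, the limit suffices. This also lets us treat an arbitrary real $\alpha$ without integrality issues. For $L\in\mathbb{N}$, let $I_L$ have graph $\tilde{\mathcal{G}}(\nrtrucks L,1)$ and $D=\emptyset$. By \eqref{eq:truck_only_star}, $\tsp_{\nrtrucks,0}(C)=2L$. Because $q$ is an integer, $\singletsphome$ assigns exactly $q$ drones to every truck tour, so $\nrvehicles_i=1+q$ for all $i$.

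First we pin down the initial plan. The sum of all truck tour lengths in any feasible solution is at least $2\nrtrucks L$, by \eqref{eq:one_truck_star} and the triangle inequality. Hence in every optimal $S^*_{\nrtrucks,0}(C)$ each tour $s^*_i$ visits exactly $L$ leaves, each once, and costs $2L$. This holds no matter which optimal solution the algorithm picks. We then bound the value $M$ of the min-max split of $s^*_i$ into one truck segment of $a$ leaves and $q$ drone segments of $b_2,\dots,b_{1+q}$ leaves, with $a+\sum_l b_l=L$. The truck tour costs $2a$ and the drone tours cost $2b_l/\alpha$. Taking $a\approx L/(1+\alpha q)$ and $b_l\approx\alpha L/(1+\alpha q)$, suitably rounded, shows $M\le \frac{2L}{1+\alpha q}+K$ for a constant $K=K(\alpha,q)$ independent of $L$.

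The main obstacle is the replanning step. The truck reaches the last node of its segment at time $2a^{\text{alg}}-1\le M$, and at that moment some drone-segment leaves may still be unvisited; the truck then has to pick them up. We plan to control this with a local-exchange argument on the minimising split. If the truck's time $2a^{\text{alg}}$ were much smaller than some drone time $2b_l/\alpha$, moving one leaf from that drone segment to the truck would strictly lower the maximum, contradicting minimality. A drone's segment cannot always be shortened in isolation, since segments are contiguous, but a chain of shifts between adjacent segments works, and each shift changes costs by at most $\max\{2,2/\alpha\}$. This yields $\max_l 2b_l/\alpha-(2a^{\text{alg}}-1)\le K'$ with $K'$ depending only on $\alpha$ and $q$. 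The drones therefore have only $O(\alpha K')$ leaves left at replanning time. The truck's replanned tour visits these leaves, each at cost at most $2$ in the star, so it ends by time $M+K''$ with $K''$ independent of $L$.

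It follows that $\singletsphome(I_L)\le \frac{2L}{1+\alpha q}+K'''$, and hence
\[
\underline{\omega}(\singletsphome)\le \lim_{L\to\infty}\frac{\frac{2L}{1+\alpha q}+K'''}{2L}=\frac{1}{1+\alpha\frac{\nrdrones}{\nrtrucks}}.
\]
For $q=0$ the bound is $1$ and follows directly from Lemma~\ref{lem-singletsphome}. If the exchange argument turns out awkward, a fallback is to restrict first to rational $\alpha$ with $L$ chosen so that $a=L/(1+\alpha q)$ and $b=\alpha a$ are integers. In that case the natural split balances exactly, and one can check directly that each drone has visited its last leaf by time $2a-1$.
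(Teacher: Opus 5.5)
Your proof is correct, but it takes a different route from the paper. The paper uses a single instance: the two-level star $\tilde{\mathcal{G}}((\nrtrucks)^2,\nrdrones\nrtrucks,1,\alpha)$ with $D=\emptyset$, for which $\tsp_{\nrtrucks,0}(C)=2\nrtrucks+2\alpha\nrdrones$. The level-2 nodes sit at distance $\alpha$, so a drone round trip to one of them takes exactly as long as a truck round trip to a level-1 node. The truck takes the $\nrtrucks$ level-1 nodes of its tour, and each of its $\nrdrones/\nrtrucks$ drones takes $\nrtrucks$ level-2 nodes. Every drone then reaches its last node at time $2\nrtrucks-1$, exactly when the truck reaches its last node. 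Replanning has nothing left to do, so $\singletsphome(I)=2\nrtrucks$ and the bound holds with equality, with no limit needed.

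You instead use the homogeneous star $\gstar(\nrtrucks L,1)$, accept an additive $O(1)$ loss from rounding and from replanning pick-ups, and let $L\to\infty$. The paper's route is shorter and attains the bound on one instance. Yours does not depend on how the policy breaks ties. In your star every optimal truck tour has exactly $L$ leaves, and all visiting orders and all contiguous splits with given segment sizes are equivalent. So your bound holds for whatever $S^*_{\nrtrucks,0}(C)$, tour order and minimizing split are chosen. The paper's argument tacitly needs each computed tour to list its level-1 nodes as a prefix, since segments are contiguous and every order is optimal in a star. Its remark that segmentation is ``equivalent to solving \tspmnname'' glosses over this.

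Two details to fix.

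\emph{The exchange step.} Moving one leaf from a drone to the truck need not strictly lower the maximum when several drones tie at it, so the step fails as worded. You do not need exchanges at all, nor the chain of shifts, since leaves are interchangeable and contiguity is irrelevant. Any minimizer of value $M$ has $2b_l/\alpha\le M$ for all $l$. Hence $a\ge L-q\alpha M/2$, and so $2b_l/\alpha-2a\le(1+\alpha q)M-2L\le(1+\alpha q)K$. This is your $K'$, and it also shows $a>0$ for large $L$.

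\emph{The fallback.} Its claim is false for $\alpha>1$. With $b=\alpha a$, a drone reaches its last leaf at $(2b-1)/\alpha=2a-\frac{1}{\alpha}>2a-1$, so the truck still has to pick up some drone leaves at replanning. This is harmless in your limit argument. It is exactly the misalignment the paper avoids by placing the drone nodes at distance $\alpha$.
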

\begin{proof}
	See Online Appendix~\ref{proof:lemma:upperbound_omega_bestcase_singletsp}.
\end{proof}

\subsection{Worst-case drone-impact ratio analysis}
\label{sec-comparison_truck_worstcase}
Observe that for any online policy $\ALG$, $\bar{\omega}(\ALG)\geq 1$, since it takes at least $\tsp_{\nrtrucks,0}(C)$ to visit the damaged nodes for instances with all nodes damaged ($D=C$).

Sections~\ref{sec:upperbounds_omega_worstcase} and \ref{sec:lowerbounds_omega_worstcase} establish lower and upper bounds on the worst-case drone-impact ratios. Since $\bar{\omega}(\singletsphome)=1$ (Lemma~\ref{thm:upperbound_omega_worstcase_singletsp}), no online policy can achieve a better worst-case drone-impact ratio, and under \singletsphome\, the use of drones never degrades performance of emergency deliveries relative to truck-only operations.

For \multitsphome\ we show that
$\min\{1+\frac{1}{\alpha}, \frac{1+\alpha}{1+\epsilon}\}\le \bar{\omega}(\multitsphome)\leq \min\{2,\frac{1}{\alpha} \left\lceil\frac{\nrtrucks}{\nrdrones}\right\rceil+1\}$ for an arbitrarily small $\varepsilon>0$. The bound is \textit{tight} for fast drones ($\alpha>1$), if there are not too many trucks, e.g., $k^t\leq k^d$. For $\alpha=1$, the bound is nearly tight for all $k^t$ and $k^d$, in the sense that the worst-case instance $I$ in Lemma~\ref{lemma:lowerbound_omega_worstcase_multitsp_new} yields ratios $\frac{\multitsphome(I)}{\tsp_{\nrtrucks,0}(C)}$ arbitrarily close to the theoretical bound of 2.

\subsubsection{Upper bounds on $\bar{\omega}(\multitsphome)$ and $\bar{\omega}(\singletsphome)$}
\label{sec:upperbounds_omega_worstcase}

\begin{lemma}\label{thm:upperbound_omega_worstcase_multitsp}
	For any $\alpha >0$, $\nrtrucks\in\mathbb{N}$, $\nrdrones\in\mathbb{N}$, we have $\bar{\omega}(\multitsphome)\leq \min\{2,\frac{1}{\alpha} \left\lceil\frac{\nrtrucks}{\nrdrones}\right\rceil+1\}$.
\end{lemma}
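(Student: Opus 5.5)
We bound $\multitsphome(I)$ from above through Lemma~\ref{lem-boundMultiTSP},
\begin{align*}
\multitsphome(I)\le \tsp_{\nrtrucks,\nrdrones}(C)+\tsp_{\nrtrucks,0}(D),
\end{align*}
and compare each summand with $\tsp_{\nrtrucks,0}(C)$. For the first summand, relation \eqref{eq:moredrones} of Lemma~\ref{lemma:basic}, applied with $n=0$ and $n'=\nrdrones$, gives $\tsp_{\nrtrucks,\nrdrones}(C)\le\tsp_{\nrtrucks,0}(C)$. For the second summand, \eqref{eq:morenodes} with $D\subseteq C$ gives $\tsp_{\nrtrucks,0}(D)\le \tsp_{\nrtrucks,0}(C)$. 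Adding the two bounds yields $\multitsphome(I)\le 2\,\tsp_{\nrtrucks,0}(C)$ for every instance, and hence $\bar{\omega}(\multitsphome)\le 2$.

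For the second term of the minimum we need a sharper bound on the first summand in terms of $\tsp_{\nrtrucks,0}(C)$. Here we invoke the reallocation bound \eqref{eq:redistribution_to_drones} with $m=\nrtrucks$, $n=0$ and $n'=\nrdrones$:
\begin{align*}
\tsp_{0,\nrdrones}(C)\le \frac{1}{\alpha}\left\lceil\frac{\nrtrucks}{\nrdrones}\right\rceil\tsp_{\nrtrucks,0}(C).
\end{align*}
Since removing trucks cannot help, \eqref{eq:moretrucks} with $m=0$ and $m'=\nrtrucks$ gives $\tsp_{\nrtrucks,\nrdrones}(C)\le \tsp_{0,\nrdrones}(C)$. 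Combining this with the bound $\tsp_{\nrtrucks,0}(D)\le\tsp_{\nrtrucks,0}(C)$ from before, we obtain
\begin{align*}
\multitsphome(I)\le \left(\frac{1}{\alpha}\left\lceil\frac{\nrtrucks}{\nrdrones}\right\rceil+1\right)\tsp_{\nrtrucks,0}(C).
\end{align*}

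Since both bounds hold for every instance $I$, dividing by $\tsp_{\nrtrucks,0}(C(I))>0$ and taking the supremum over $I$ gives the minimum of the two constants. The only point that needs care is that Lemma~\ref{lemma:basic} permits $m=0$ or $n=0$: this is fine because it requires only $m+n\ge 1$ and $\nrtrucks,\nrdrones\ge 1$. I therefore see no real obstacle; the main step is choosing the correct instantiation of \eqref{eq:redistribution_to_drones}.
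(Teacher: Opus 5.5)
Your proof is correct and follows the paper's argument essentially verbatim. You bound the second-stage term by $\tsp_{\nrtrucks,0}(C)$ via \eqref{eq:morenodes}, and the first-stage term either by $\tsp_{\nrtrucks,0}(C)$ or by $\tsp_{0,\nrdrones}(C)\le\frac{1}{\alpha}\lceil\frac{\nrtrucks}{\nrdrones}\rceil\tsp_{\nrtrucks,0}(C)$ via \eqref{eq:moretrucks} and \eqref{eq:redistribution_to_drones}; your citation of \eqref{eq:moredrones} for $\tsp_{\nrtrucks,\nrdrones}(C)\le\tsp_{\nrtrucks,0}(C)$ is actually more accurate than the paper's reference to \eqref{eq:moretrucks} at that step.
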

\begin{proof}
	On the one hand, from the definition of \multitsphome, \eqref{eq:morenodes}, and \eqref{eq:moretrucks}, it follows
	\begin{align*}
		\bar{\omega}(\multitsphome) =\sup_I \frac{\multitsphome(I)}{\tsp_{\nrtrucks,0}(C)}
		= \frac{\tsp_{\nrtrucks,\nrdrones}(C) + \tsp_{\nrtrucks,0}(\Ddrone)}
		{ \tsp_{\nrtrucks,0}(C)}\leq \\
		\le  \frac{\tsp_{\nrtrucks,0}(C) +\tsp_{\nrtrucks,0}(C)}
		{ \tsp_{\nrtrucks,0}(C)}=2.
	\end{align*}

	On the other hand, from the definition of \multitsphome, \eqref{eq:morenodes}, \eqref{eq:moretrucks}, and \eqref{eq:redistribution_to_drones}, it follows
	\begin{align*}
		\bar{\omega}(\multitsphome) =\sup_I \frac{\multitsphome(I)}{\tsp_{\nrtrucks,0}(C)}
		= \frac{\tsp_{\nrtrucks,\nrdrones}(C) + \tsp_{\nrtrucks,0}(\Ddrone)}
		{ \tsp_{\nrtrucks,0}(C)}\leq \\
		\le  \frac{\tsp_{0,\nrdrones}(C) +\tsp_{\nrtrucks,0}(C)}
		{ \tsp_{\nrtrucks,0}(C)}\leq  \frac{\frac{1}{\alpha} \left\lceil\frac{\nrtrucks}{\nrdrones}\right\rceil\tsp_{\nrtrucks,0}(C) +\tsp_{\nrtrucks,0}(C)}
		{ \tsp_{\nrtrucks,0}(C)} =   \\
		=\frac{1}{\alpha} \left\lceil\frac{\nrtrucks}{\nrdrones}\right\rceil+1.
	\end{align*}
\end{proof}

\begin{lemma}\label{thm:upperbound_omega_worstcase_singletsp}
	For any $\alpha >0$, $\nrtrucks\in\mathbb{N}$, $\nrdrones\in\mathbb{N}$, we have $\bar{\omega}(\singletsphome)\leq 1$.
\end{lemma}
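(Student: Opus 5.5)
The plan is to combine Lemma~\ref{lem-singletsphome} with the definition of the worst-case drone-impact ratio. By Lemma~\ref{lem-singletsphome}, every RDP instance $I$ satisfies $\singletsphome(I)\le \tsp_{\nrtrucks,0}(C(I))$. Since $C$ is nonempty and all its nodes lie at positive distance from $\depot$, the denominator $\tsp_{\nrtrucks,0}(C(I))$ is strictly positive. Dividing gives $\singletsphome(I)/\tsp_{\nrtrucks,0}(C(I))\le 1$ for every $I$, and taking the supremum over $\mathcal{I}$ yields $\bar{\omega}(\singletsphome)\le 1$.

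The substance therefore lies in Lemma~\ref{lem-singletsphome}, whose proof was called ``immediate''. I would make that step explicit by fixing one truck tour $s^*_i$ of $S^*_{\nrtrucks,0}(C)$, of length at most $\tsp_{\nrtrucks,0}(C)$, and bounding the completion times of all vehicles assigned to $s^*_i$ separately.

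\emph{Truck.} When the truck replans at $v_{(j_1)}$, the set of nodes it still has to visit is $\Ddrone_i\cup C^d_i$. This set is contained in the remaining suffix $(v_{(j_1)},\ldots,v_{(n_i)}=\depot)$ of $s^*_i$: the nodes of the truck's own segment have already been visited by the truck. Following that suffix, and skipping nodes via shortest paths, is a feasible continuation. By the triangle inequality its length is at most the length of the suffix. Since the truck's replanned path has minimum length, the truck returns by time $c(s^*_i)\le \tsp_{\nrtrucks,0}(C)$.

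\emph{Drones.} Each drone completes its tour $\segmenttour_i^l$ within the minimized split makespan. I need to show this makespan is at most $c(s^*_i)$. The trivial split, in which the truck takes the whole tour and every drone segment is empty, is feasible and has makespan $c(s^*_i)$. Because the split is chosen to minimize the makespan, its value is at most $c(s^*_i)$, so every drone tour takes at most $c(s^*_i)$.

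Feasibility of the resulting solution should also be confirmed. Every node in $C$ is visited, since it lies either on a drone segment or in $C^d_i$. Every damaged node is visited by a truck: one on the truck segment is visited before replanning, and any other lies in $\Ddrone_i$ at replanning time.

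The main obstacle I expect is the timing subtlety in the truck bound. A node in $C^d_i$ may be reached by a drone after the truck's replanning moment, and a damaged node on a drone segment is only discovered when the drone arrives. Since the truck's replanned tour is fixed at replanning time, I would interpret $\Ddrone_i\cup C^d_i$ at that moment conservatively as ``all of $s^*_i$ not yet visited by the truck, minus nodes already certified undamaged.'' This set is still contained in the suffix, so the suffix bound above is unaffected.
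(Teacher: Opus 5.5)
Your proposal is correct and takes the paper's route: the paper obtains the bound directly from Lemma~\ref{lem-singletsphome}, which it calls immediate from the definition of \singletsphome. Your two explicit arguments fill in why that lemma holds, namely the prefix-plus-suffix argument for the truck (via the triangle inequality and minimality of the replanned tour) and the trivial-split argument for the drones (which relies on ``up to $k_i$ segments''). Your feasibility remark forgets that nodes on the truck's own segment are covered by the truck, but this is cosmetic and not needed for the bound.
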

\begin{proof}
	Immediately follows from Lemma~\ref{lem-singletsphome}.
\end{proof}

\subsubsection{Lower bounds on $\bar{\omega}(\multitsphome)$ and $\bar{\omega}(\singletsphome)$}
\label{sec:lowerbounds_omega_worstcase}

\begin{restatable}{lemma}{lemmalowerboundomegaworstcasemultitspnew}\label{lemma:lowerbound_omega_worstcase_multitsp_new}
	For any $\alpha>0$, $\nrtrucks\in\mathbb{N}$, $\nrdrones\in\mathbb{N}$, there is an instance $I$ with
	$\frac{\multitsphome(I)}{\tsp_{\nrtrucks,0}(C)}=\min\{1+\frac{1}{\alpha}, \frac{1+\alpha}{1+\epsilon}\}$.
\end{restatable}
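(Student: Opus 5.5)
The plan is to split into the two regimes that determine which term attains the minimum, and in each regime to exhibit an explicit star-type instance in which an adversary declares damaged exactly the nodes that the drone tours of the \initial-stage solution $S^*_{\nrtrucks,\nrdrones}(C)$ computed by \multitsphome\ visit. Since \multitsphome\ is deterministic, this choice of $D$ is legitimate, and by \eqref{multitsphome-makespanformula} we get $\multitsphome(I)=\tsp_{\nrtrucks,\nrdrones}(C)+\tsp_{\nrtrucks,0}(\Ddrone)$ with $\Ddrone=D$. The comparison value $\tsp_{\nrtrucks,0}(C)$ is computed in each case from Lemma~\ref{lemma:relations_graphs}.

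\emph{Regime $\alpha\ge 1$ (the minimum is $1+\frac1\alpha$ once $\varepsilon$ is small).} I would take $\tilde{\mathcal{G}}(1,1)$, i.e.\ $C=\{v\}$ with $c(\depot,v)=1$, and arbitrary $\nrtrucks,\nrdrones$. For $\alpha>1$, every optimal solution of \tspmnname\ must serve $v$ by a drone, because a truck needs $2$ while a drone needs $2/\alpha<2$ by \eqref{eq:one_truck_star}--\eqref{eq:one_drone_star}. Hence $\tsp_{\nrtrucks,\nrdrones}(C)=2/\alpha$, and with $D=\{v\}=\Ddrone$ the \secondary\ stage costs $2$. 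This gives $\multitsphome(I)=2+2/\alpha$. Since $\tsp_{\nrtrucks,0}(C)=2$, the ratio is $1+\frac1\alpha$. For $\alpha=1$ there is a tie between truck and drone, so I would perturb by moving $v$ slightly: either rely on the $\varepsilon$ in the statement, or note that $1+\frac1\alpha=2=\frac{1+\alpha}{1}$ and cover $\alpha=1$ by the construction of the second regime.

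\emph{Regime $\alpha<1$ (the minimum is $\frac{1+\alpha}{1+\varepsilon}$).} Now a single node no longer works, because trucks are faster and the drone is used only to parallelize. The target is an instance where the \initial\ stage already costs essentially the full truck-only makespan $T=\tsp_{\nrtrucks,0}(C)$, while the drone-covered part costs about $\alpha T$ for trucks to revisit. The plan is to use a two-level star $\tilde{\mathcal{G}}(n_1,n_2,1,d_2)$ with $n_1$ level-1 nodes at distance $1$ and a small number of drone-bait nodes at distance $d_2$. The parameters are tuned so that three things hold. First, serving the bait nodes by trucks would force some truck to exceed $T(1+\varepsilon)$, using \eqref{eq:one_truck_star2}. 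Second, a drone can serve them within the makespan set by the trucks, using \eqref{eq:one_drone_star2}. Third, the bait nodes sit so that the trucks' load on $W_1$ equals $T/(1+\varepsilon)$ by the rounding in $\lceil n_1/\nrtrucks\rceil$. The lower bound on $\tsp_{\nrtrucks,\nrdrones}$ would be proven by the averaging/contradiction argument of Lemma~\ref{lemma:relations_graphs2}, and the upper bound by an explicit split. The \secondary\ stage then costs $2d_2\lceil n_2/\nrtrucks\rceil\approx\alpha T/(1+\varepsilon)$.

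The main obstacle is this second regime. In the naive two-level star the truck-only optimum itself absorbs the bait nodes cheaply, so the extra \secondary-stage cost merely reproduces work that $\tsp_{\nrtrucks,0}(C)$ already pays for, and the ratio collapses to $1$. I would first try to exploit the integrality gap of $\lceil\cdot\rceil$. The idea is to choose $n_1=\nrtrucks q$ so that trucks are exactly saturated, and then add bait that would push one truck to a new round in truck-only mode but is absorbed by drones for free, with the $\varepsilon$ absorbing the slack needed to make the drone assignment strictly optimal and hence forced. A further point to check is that any tie among optimal \initial-stage solutions is harmless, since the adversary reads off whichever solution the deterministic policy computes.
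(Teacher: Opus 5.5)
\paragraph{Verdict.} Your case $\alpha>1$ is correct and is essentially the paper's argument. The paper uses $\min\{\nrtrucks,\nrdrones\}$ unit-distance nodes, all damaged. One node works just as well, since a truck tour to it costs $2>2/\alpha$, so the drone visit is forced.

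\paragraph{The gap: $\alpha\le 1$.} For this regime you only sketch a plan, and the plan cannot succeed because star graphs are the wrong family. In a star, tour costs are additive by \eqref{eq:one_truck_star2}--\eqref{eq:one_drone_star2}, and already for $\nrtrucks=\nrdrones=1$ a swap argument caps the ratio. Let $(A,S)$ be the truck/drone split of the \initial\ stage. Let $w(\cdot)$ be the sum of $2c(\depot,v)$, $W=w(C)=\tsp_{1,0}(C)$, $M=\tsp_{1,1}(C)$, and $s=w(S)/W$. Comparing with the all-truck solution and with the swapped split gives $M\le W$ and $M\le\max\{w(S),w(A)/\alpha\}$. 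For $\alpha<1$ and $S\neq\emptyset$ we also have $M\ge w(S)/\alpha>w(S)$. Together these give $s\le\alpha$ and $M\le(1-s)W/\alpha$, hence
\[
\frac{\multitsphome(I)}{\tsp_{1,0}(C)}\le\frac{M+w(S)}{W}\le\min\Bigl\{1+s,\ \frac{1-s}{\alpha}+s\Bigr\}\le 1+\min\{\alpha,1-\alpha\}.
\]
For $\alpha\in(\frac12,1)$ this equals $2-\alpha<\frac{1+\alpha}{1+\varepsilon}$ once $\varepsilon$ is small. So no tuning of $n_1,n_2,d_2$ helps, and the rounding trick in $\lceil n_1/\nrtrucks\rceil$ is vacuous for $\nrtrucks=1$.

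At $\alpha=1$ the swapped split ties with the chosen one. A star therefore yields ratio $1$ whenever \multitsphome\ gives the heavier part to the truck, and you cannot rule that out. For the same reason, moving $v$ in your first construction does not break the truck/drone tie at $\alpha=1$: both vehicle types then need identical time on every tour.

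\paragraph{The missing idea.} You need a non-star graph in which the truck-only tour uses a shortcut that no split \initial\ stage can exploit. The paper takes $D=C$, with:
\begin{itemize}
\item $\nrtrucks$ proximate nodes $p$ with $c(\depot,p)=1$;
\item $\min\{\nrtrucks,\nrdrones\}$ far nodes $f$ with $c(\depot,f)=\alpha$;
\item an edge joining each $f$ to its own $p$, of length $1-\alpha+2\varepsilon$.
\end{itemize}
Then $(\depot,p,f,\depot)$ costs $2+2\varepsilon$ and $\tsp_{\nrtrucks,0}(C)=2+2\varepsilon$. The appendix writes the edge length as $\alpha-1+\varepsilon$, which must be read this way.

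For $\alpha<1$ the \initial\ stage is forced. Every $p$ must be served by a truck, since a drone needs $2/\alpha>2$. Two $p$'s cost $4$ and $|P|=\nrtrucks$, so each truck serves exactly one $p$. Adding any further node costs at least $2+2\varepsilon$ (for $\varepsilon\le\alpha$). Hence the drones serve all of $F$, and the makespan is $2$. The trucks must then revisit $F$ in time $2\alpha$, giving the ratio $\frac{2+2\alpha}{2+2\varepsilon}$.

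At $\alpha=1$ the same instance needs no tie-breaking assumption. In any makespan-$2$ solution every vehicle serves at most one node. So at least $\min\{\nrtrucks,\nrdrones\}\ge 1$ damaged nodes go to drones, and the \secondary\ stage costs at least $2$.
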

\begin{proof}
	See Online Appendix~\ref{proof:lemma:lowerbound_omega_worstcase_multitsp_new}.
\end{proof}

\begin{lemma}\label{lemma:lowerbound_omega_worstcase_singletsp}
	For any $\alpha >0$, $\nrtrucks\in\mathbb{N}$, $\nrdrones\in\mathbb{N}$, we have $\bar{\omega}(\singletsphome)= 1$.
\end{lemma}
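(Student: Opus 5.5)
The equality splits into two inequalities. The upper bound $\bar{\omega}(\singletsphome)\le 1$ is already established in Lemma~\ref{thm:upperbound_omega_worstcase_singletsp}, which follows from Lemma~\ref{lem-singletsphome}. It therefore remains to show $\bar{\omega}(\singletsphome)\ge 1$. For this it suffices to exhibit, for every $\alpha>0$, $\nrtrucks\in\mathbb{N}$ and $\nrdrones\in\mathbb{N}$, one instance $I$ with $\singletsphome(I)\ge \tsp_{\nrtrucks,0}(C)$.

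I would use the observation made at the start of Section~\ref{sec-comparison_truck_worstcase}. Take any graph, for concreteness a star graph $\tilde{\mathcal{G}}(n,1)$, and let all nodes be damaged, i.e.\ $D=C$. By Lemma~\ref{lem-singlemultiple}, applied to $\singletsphome$ as a deterministic online policy,
\begin{align*}
\singletsphome(I)\ge \max\{\tsp_{\nrtrucks,\nrdrones}(C),\tsp_{\nrtrucks,0}(D)\}\ge \tsp_{\nrtrucks,0}(D)=\tsp_{\nrtrucks,0}(C).
\end{align*}
The intuition is that every node in $D=C$ must be visited by some truck, so the truck tours alone form a feasible \tspmnname\ solution for $C$ with $\nrtrucks$ trucks.

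Combining the two inequalities gives $\frac{\singletsphome(I)}{\tsp_{\nrtrucks,0}(C)}\ge 1$ for this instance. Hence $\bar{\omega}(\singletsphome)\ge 1$, and with the upper bound, equality holds.

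There is no real obstacle here. The only point to check is that Lemma~\ref{lem-singlemultiple} (equivalently Lemma~\ref{lem-boundOPT}) applies to the realized tours of $\singletsphome$. It does, because $\singletsphome$ always produces a feasible solution of $I^*$, and every feasible solution has makespan at least $\OPThome(I^*)$.
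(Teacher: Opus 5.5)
Your proposal is correct and follows the paper's argument. The paper also takes the upper bound from Lemma~\ref{lem-singletsphome} and the lower bound from instances with $D=C$, where every node must be served by a truck, so the ratio is exactly $1$. Your appeal to Lemma~\ref{lem-singlemultiple} simply makes explicit the step the paper states without justification.
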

\begin{proof}
	For instances $I$ with $D=C$ (all nodes are damaged), $\frac{\singletsphome(I)}{\tsp_{\nrtrucks,0}(C)}=1$. Lemma \eqref{lem-singletsphome} completes the proof.
\end{proof}

\subsection{Competitive ratio analysis}\label{sec:competitive}
Section~\ref{sec:upperboundsigma} provides upper bounds on the competitive ratios, while Sections~\ref{sec:worstcase}, \ref{sec-lowerbounds_singletsp}, and \ref{sec-lowerbounds} establish lower bounds for $\multitsphome$, $\singletsphome$, and any deterministic online policy \ALG, respectively.

Lemmata~\ref{cor-upperbounds} and \ref{lem:alpha=1var2}-\ref{lemma:wc-ratiosmaller2new},  establish the following \textit{tight} competitive ratios for $\multitsphome$:
\begin{align}
	\sigma(\multitsphome)= \min\{2,1+\left\lceil \frac{\nrdrones}{\nrtrucks}\right\rceil \alpha\} \label{eq:tightsigma_multitsp}
\end{align}

No online deterministic policy can achieve a better competitive ratio than $\multitsphome$, if trucks and drones have the same speed or if the truck speed is an integer multiple of the drone speed (Lemma~\ref{lem-1/alphainN}).
Moreover, if the  drone speed is an integer multiple of the truck speed, the competitive-ratio lower bound for any deterministic policy is $2-\frac{2}{\alpha}$; for increasing drone speed,  this bound approaches the competitive ratio of $\multitsphome$.

For \singletsphome\ we are able to establish tightness of the competitive ratio $\sigma(\singletsphome)$ $= 1+\left\lceil \frac{\nrdrones}{\nrtrucks}\right\rceil \alpha$,  if $\alpha=1$ and $\nrtrucks\ge \alpha\left\lceil\frac{\nrdrones}{\nrtrucks}\right\rceil$ (Lemma~\ref{lemma:wc-alpha1}) or if $\frac{1}{\alpha}$ is integer and $\nrtrucks\ge \left\lceil\frac{\nrdrones}{\nrtrucks}\right\rceil+b-1$ (Lemma~\ref{lemma:wc-bigalpha1-1}).
For $\alpha\in \mathbb{N}$ and $\nrtrucks\ge \left\lceil\frac{\alpha\nrdrones}{\nrtrucks}\right\rceil$, we prove a slightly weaker result  $\sigma(\singletsphome)\ge 1+\left\lceil \frac{\alpha\nrdrones}{\nrtrucks}\right\rceil$ (Lemma~\ref{lemma:wc-bigalpha1part2-singletsp}).

\subsubsection{Upper bounds on $\sigma(\multitsphome)$ and $\sigma(\singletsphome)$}\label{sec:upperboundsigma}

\begin{restatable}{lemma}{lemmacorupperbounds}\label{cor-upperbounds}
	For any $\alpha >0$, $\nrtrucks\in\mathbb{N}$, $\nrdrones\in\mathbb{N}$, we have $\sigma(\multitsphome)\le \min\{2,1+\left\lceil \frac{\nrdrones}{\nrtrucks}\right\rceil \alpha\}$.
\end{restatable}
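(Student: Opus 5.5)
We bound the two terms in the minimum separately, using the makespan formula \eqref{multitsphome-makespanformula} (or directly Lemma~\ref{lem-boundMultiTSP}) together with the lower bound on the optimum from Lemma~\ref{lem-boundOPT}. Fix an arbitrary RDP instance $I$ with full-information counterpart $I^*$. By Lemma~\ref{lem-boundMultiTSP},
\begin{align*}
\multitsphome(I)\le \tsp_{\nrtrucks,\nrdrones}(C)+\tsp_{\nrtrucks,0}(D),
\end{align*}
and by Lemma~\ref{lem-boundOPT}, $\OPThome(I^*)\ge \max\{\tsp_{\nrtrucks,\nrdrones}(C),\tsp_{\nrtrucks,0}(D)\}$.

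\textbf{The bound $2$.} Each of the two summands is at most $\OPThome(I^*)$, so $\multitsphome(I)\le 2\,\OPThome(I^*)$. Taking the supremum over $I$ gives $\sigma(\multitsphome)\le 2$.

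\textbf{The bound $1+\lceil \nrdrones/\nrtrucks\rceil\alpha$.} We keep $\tsp_{\nrtrucks,\nrdrones}(C)\le \OPThome(I^*)$ for the first summand. For the second summand, the key step is to bound $\tsp_{\nrtrucks,0}(D)$ by a multiple of $\tsp_{\nrtrucks,\nrdrones}(C)$ rather than by $\OPThome(I^*)$.

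First, since $D\subseteq C$, \eqref{eq:morenodes} gives $\tsp_{\nrtrucks,0}(D)\le \tsp_{\nrtrucks,0}(C)$. Next, we apply the reallocation inequality \eqref{eq:redistribution_to_trucks} with $m=\nrtrucks$, $n=\nrdrones$ and $m'=\nrtrucks$. This gives
\begin{align*}
\tsp_{\nrtrucks,0}(C)\le \left(1+\alpha\left\lceil \tfrac{\nrdrones}{\nrtrucks}\right\rceil\right)\tsp_{\nrtrucks,\nrdrones}(C).
\end{align*}
Combining the two, the second summand is at most $\alpha\lceil \nrdrones/\nrtrucks\rceil\,\tsp_{\nrtrucks,\nrdrones}(C)$ plus one further copy of $\tsp_{\nrtrucks,\nrdrones}(C)$. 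Adding the first summand then yields only $2+\alpha\lceil\nrdrones/\nrtrucks\rceil$, which is too weak by one copy of $\tsp_{\nrtrucks,\nrdrones}(C)$.

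\textbf{Refining the argument.} The loss comes from bounding the set $D$ instead of the set $D^d$ that actually appears in \eqref{multitsphome-makespanformula}. The set $D^d$ consists only of damaged nodes visited by drones in $S^*_{\nrtrucks,\nrdrones}(C)$. The drone tours of that solution each have drone length at most $\tsp_{\nrtrucks,\nrdrones}(C)$, hence truck length at most $\alpha\,\tsp_{\nrtrucks,\nrdrones}(C)$.

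We distribute the $\nrdrones$ drone tours as evenly as possible among the $\nrtrucks$ trucks, so that each truck gets at most $\lceil \nrdrones/\nrtrucks\rceil$ of them. Each truck performs its assigned tours consecutively, returning to the depot in between. This is a feasible truck-only solution covering $D^d$, so
\begin{align*}
\tsp_{\nrtrucks,0}(D^d)\le \alpha\left\lceil \tfrac{\nrdrones}{\nrtrucks}\right\rceil\tsp_{\nrtrucks,\nrdrones}(C).
\end{align*}
Hence
\begin{align*}
\multitsphome(I)\le \left(1+\alpha\left\lceil \tfrac{\nrdrones}{\nrtrucks}\right\rceil\right)\tsp_{\nrtrucks,\nrdrones}(C)\le \left(1+\alpha\left\lceil \tfrac{\nrdrones}{\nrtrucks}\right\rceil\right)\OPThome(I^*).
\end{align*}
Taking the supremum over $I$ and combining with the bound $2$ gives the claimed minimum.

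The only delicate step is this refinement, namely using $D^d$ and the drone tours of $S^*_{\nrtrucks,\nrdrones}(C)$ directly, rather than the cruder chain through $\tsp_{\nrtrucks,0}(D)$.
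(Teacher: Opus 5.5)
Your proof is correct and has the same two-part structure as the paper's. The bound $2$ is obtained the same way. The bound $1+\alpha\lceil \nrdrones/\nrtrucks\rceil$ comes, as in the paper, from working with $D^d$ instead of $D$ and shifting the drone-covered damaged nodes onto the trucks at cost factor $\alpha\lceil \nrdrones/\nrtrucks\rceil$.

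Where you differ is in how the key inequality is justified, and your version is the sound one. The paper's chain has three steps:
\begin{itemize}
\item It uses $\tsp_{\nrtrucks,0}(D^d)\le \alpha\lceil \nrdrones/\nrtrucks\rceil\,\tsp_{0,\nrdrones}(D^d)$, from \eqref{eq:redistribution_to_trucks} with $m=0$.
\item It enlarges $D^d$ to $C$ via \eqref{eq:morenodes}.
\item It divides by $\tsp_{\nrtrucks,\nrdrones}(C)$ and concludes $\le 1+\alpha\lceil \nrdrones/\nrtrucks\rceil$.
\end{itemize}
That last step tacitly needs $\tsp_{0,\nrdrones}(C)\le \tsp_{\nrtrucks,\nrdrones}(C)$. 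This is the reverse of \eqref{eq:moretrucks} and fails in general. For example, on $\tilde{\mathcal{G}}(1,1)$ with one truck, one drone and $\alpha<1$, we have $\tsp_{1,1}(C)=2$ but $\tsp_{0,1}(C)=2/\alpha$.

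Your observation is exactly what is needed to close this step. The drone tours of $S^*_{\nrtrucks,\nrdrones}(C)$ already cover $D^d$, and each has truck length at most $\alpha\,\tsp_{\nrtrucks,\nrdrones}(C)$. Equivalently, $\tsp_{0,\nrdrones}(D^d)\le \tsp_{\nrtrucks,\nrdrones}(C)$, which is the correct replacement for the paper's detour through $\tsp_{0,\nrdrones}(C)$.

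In a final write-up you can simply drop your exploratory chain through $\tsp_{\nrtrucks,0}(D)$. It only shows why the refinement is needed.
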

\begin{proof}
	See Online Appendix~\ref{proof:cor-upperbounds}.
\end{proof}

\begin{restatable}{lemma}{lemmacorupperboundssingletsp}\label{cor-upperbounds_singletsp}
	For any $\alpha >0$, $\nrtrucks\in\mathbb{N}$, $\nrdrones\in\mathbb{N}$, we have $\sigma(\singletsphome)\le 1+\left\lceil \frac{\nrdrones}{\nrtrucks}\right\rceil \alpha$.
\end{restatable}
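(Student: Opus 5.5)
The plan is to chain three earlier results: the trivial upper bound on \singletsphome\ (Lemma~\ref{lem-singletsphome}), the reallocation inequality \eqref{eq:redistribution_to_trucks} of Lemma~\ref{lemma:basic}, and the lower bound on the optimum (Lemma~\ref{lem-boundOPT}). No case analysis is needed; the bound holds instance by instance.

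First, fix an arbitrary RDP instance $I=(\nrtrucks,\nrdrones,\alpha,G,D)$. By Lemma~\ref{lem-singletsphome} we have $\singletsphome(I)\le \tsp_{\nrtrucks,0}(C)$.

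Second, bound the truck-only makespan by the mixed \tspmnname\ value. Apply \eqref{eq:redistribution_to_trucks} with $m=\nrtrucks$, $n=\nrdrones$, $m'=\nrtrucks$ and $W=C$. This gives
\begin{align*}
\tsp_{\nrtrucks,0}(C)\le \left(\left\lceil \tfrac{\nrtrucks}{\nrtrucks}\right\rceil+\alpha\left\lceil \tfrac{\nrdrones}{\nrtrucks}\right\rceil\right)\tsp_{\nrtrucks,\nrdrones}(C)=\left(1+\alpha\left\lceil \tfrac{\nrdrones}{\nrtrucks}\right\rceil\right)\tsp_{\nrtrucks,\nrdrones}(C).
\end{align*}
Intuitively, each truck takes over its own tour and the tours of at most $\lceil \nrdrones/\nrtrucks\rceil$ drones, each of which is at most $\alpha$ times longer when driven by a truck.

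Third, Lemma~\ref{lem-boundOPT} gives $\tsp_{\nrtrucks,\nrdrones}(C)\le \OPT(I^*)$. Combining the three inequalities yields
\begin{align*}
\singletsphome(I)\le \left(1+\alpha\left\lceil \tfrac{\nrdrones}{\nrtrucks}\right\rceil\right)\OPT(I^*).
\end{align*}
The denominator $\OPT(I^*)$ is positive because $C$ is nonempty and all nodes of $C$ lie at positive distance from $\depot$. Taking the supremum over all instances $I$ then gives the claim.

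There is essentially no obstacle here. The only point worth checking is that \eqref{eq:redistribution_to_trucks} is stated for $m'\ge 1$ and general $m,n$, so the choice $m'=m=\nrtrucks$ is legitimate, with $\lceil \nrtrucks/\nrtrucks\rceil=1$.
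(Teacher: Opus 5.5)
Your proof is correct and matches the paper's own proof: it chains Lemma~\ref{lem-singletsphome}, the reallocation inequality \eqref{eq:redistribution_to_trucks} with $m'=m=\nrtrucks$, and the bound $\OPT(I^*)\ge\tsp_{\nrtrucks,\nrdrones}(C)$ from Lemma~\ref{lem-boundOPT}. The only cosmetic difference is that the paper writes the denominator as $\max\{\tsp_{\nrtrucks,\nrdrones}(C),\tsp_{\nrtrucks,0}(D)\}$ and then keeps only the first term.
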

\begin{proof}
	See Online Appendix~\ref{proof:cor-upperbounds_singletsp}.
\end{proof}

\subsubsection{Lower bounds on $\sigma(\multitsphome$)}\label{sec:worstcase}

In this section, Lemmata~\ref{lem:alpha=1var2}-\ref{lemma:wc-ratiosmaller2new} construct  worst-case instances for \textit{all possible} values of $\alpha, k^t$, and $k^d$, and illustrate that the bounds from Lemma~\ref{cor-upperbounds} are tight. The ratios of Lemma~\ref{cor-upperbounds} are $\min\{2,1+\left\lceil \frac{\nrdrones}{\nrtrucks}\right\rceil \alpha\}$. The cases consider $\alpha \left\lceil\frac{\nrdrones}{\nrtrucks} \right\rceil< 1$ (Lemma~\ref{lemma:wc-ratiosmaller2new}); and $\alpha<1$  and $\alpha \left\lceil\frac{\nrdrones}{\nrtrucks} \right\rceil\geq 1$ (Lemma~\ref{lemma:wc-bigalpha1-2}). The cases of $\alpha \left\lceil\frac{\nrdrones}{\nrtrucks} \right\rceil\geq 1$ and $\alpha=1$ as well $\alpha \left\lceil\frac{\nrdrones}{\nrtrucks} \right\rceil\geq 1$ and $\alpha>1$ are covered by Lemmata~\ref{lem:alpha=1var2}  and \ref{lemma:kdkv_LB_2new2}, respectively.

\begin{lemma}\label{lem:alpha=1var2}
	For $\alpha= 1$, $\nrdrones, \nrtrucks\in \mathbb{N}$, there is an instance $I$ with $\frac{\multitsphome(I)}{\OPT(I^*)}=2$.
\end{lemma}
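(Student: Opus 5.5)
We construct an adversarial instance on the star graph $\tilde{\mathcal{G}}(\nrtrucks+\nrdrones,1)$ with $\alpha=1$. The damaged set is chosen after fixing the deterministic first-stage plan of \multitsphome, so that the only damaged node is one that a drone inspects. Then \multitsphome\ must send a truck out again in a second stage, while the offline optimum simply lets a truck take that node from the start.

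\emph{Step 1: the instance and $\tsp_{\nrtrucks,\nrdrones}(C)$.} Let $C$ consist of $n=\nrtrucks+\nrdrones$ nodes, each at distance $1$ from $\depot$. Since $\alpha=1$, trucks and drones are interchangeable for the first stage.
\begin{itemize}
\item Assigning one node to each vehicle gives makespan $2$.
\item Conversely, any feasible solution uses tours that visit nodes of $C$ only through $\depot$, and any tour visiting $j$ nodes has length $2j$ by \eqref{eq:one_truck_star} and \eqref{eq:one_drone_star}. As there are $n$ nodes and $n$ vehicles, makespan below $4$ forces every vehicle to visit at most one node, hence exactly one.
\end{itemize}
Thus $\tsp_{\nrtrucks,\nrdrones}(C)=2$, and every optimal solution $S^*_{\nrtrucks,\nrdrones}(C)$ assigns exactly one node to each vehicle. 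In particular, since $\nrdrones\ge 1$, some node $v$ is visited only by a drone in the solution that the deterministic policy \multitsphome\ computes. The first stage does not depend on $D$, because nothing about $D$ is known before the plan is fixed. We therefore set $D=\{v\}$.

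\emph{Step 2: cost of \multitsphome.} By construction $D^d=\{v\}$. From \eqref{multitsphome-makespanformula} and \eqref{eq:one_truck_star} (or \eqref{eq:truck_only_star} with $n=1$) we get
\[
\multitsphome(I)=2+\tsp_{\nrtrucks,0}(\{v\})=2+2=4.
\]

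\emph{Step 3: cost of the optimum.} In $I^*$, exhibit the solution in which some truck takes $v$ and the remaining $n-1$ nodes are distributed one per remaining vehicle. This is feasible because $\nrtrucks\ge1$, it respects the requirement that damaged nodes be truck-visited, and it has makespan $2$. Lemma~\ref{lem-boundOPT} gives $\OPT(I^*)\ge\tsp_{\nrtrucks,\nrdrones}(C)=2$. Hence $\OPT(I^*)=2$ and the ratio equals $2$.

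The only delicate point is Step 1: the argument needs that \emph{every} optimal first-stage solution leaves some node to a drone alone, since we cannot control which optimum the policy picks. The counting argument above handles this. The graph is also planar and metric, consistent with the paper's remarks.
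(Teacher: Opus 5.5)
Your proof is correct and takes essentially the same approach as the paper. Both use the star graph $\tilde{\mathcal{G}}(\nrtrucks+\nrdrones,1)$ with a single damaged node placed adversarially at a node that a drone visits in \multitsphome's first-stage plan, which gives $\multitsphome(I)=4$ against $\OPT(I^*)=2$; your counting argument that every optimal first-stage solution assigns exactly one node to each vehicle makes explicit what the paper only asserts.
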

\begin{proof}
	Consider an instance with one damaged node, in which the depot node $\depot$ and nodes $C$ form a graph $\tilde{\mathcal{G}}(\nrtrucks+\nrdrones,1)$. In $\multitsphome$, each vehicle first performs round tours from the depot $(v_0, v, v_0), v\in C$ with the makespan of 2.
	Observe that notwithstanding which nodes  $\multitsphome$ decides to be visited by the trucks, there is an instance in which the damaged node is visited by the drone.
	For this instance, $\multitsphome(I)=4$ as a truck has to revisit the damaged node in the \secondary\ stage.

	Observe that $OPT(I^*)=2$, because a truck visit the damaged node, and $\frac{\multitsphome(I)}{\OPT(I^*)}=2$.

	Figure~\ref{fig:star1} illustrates such an instance with $k^t=1$ and $k^d=2$. Regardless of which nodes $v \in \{v_1, v_2, v_3\}$ are visited initially by the drones in $\multitsphome$, the oracle can always select an instance with the damage placed at one of these nodes.
\end{proof}

\begin{restatable}{lemma}{lemmakdkvlbnew}\label{lemma:kdkv_LB_2new2}
	For $\alpha\neq 1$, $\nrdrones, \nrtrucks \in \mathbb{N}$, there is an instance $I$ with $\frac{\multitsphome(I)}{\OPT(I^*)}= \min \{2,1+\alpha\}$.
\end{restatable}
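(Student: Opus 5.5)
We build a star instance for each of the two regimes and let the adversary place damage after seeing which nodes the deterministic policy \multitsphome\ assigns to drones in its first stage. This is the same device as in Lemma~\ref{lem:alpha=1var2}. In both regimes we design the graph so that $\tsp_{\nrtrucks,\nrdrones}(C)=2$ and so that \emph{every} optimal first-stage solution $S^*_{\nrtrucks,\nrdrones}(C)$ makes some drone visit a node at truck distance $\min\{1,\alpha\}$. We then damage exactly that one node. By \eqref{multitsphome-makespanformula} and \eqref{eq:one_truck_star2}, this gives $\multitsphome(I)=2+2\min\{1,\alpha\}$. The remaining task is to exhibit a full-information solution of makespan $2$. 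Since $\OPT(I^*)\ge\tsp_{\nrtrucks,\nrdrones}(C)=2$ by Lemma~\ref{lem-boundOPT}, the ratio is then exactly $1+\min\{1,\alpha\}=\min\{2,1+\alpha\}$.

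\emph{Case $\alpha>1$.} Take $\tilde{\mathcal{G}}(\nrtrucks+\nrdrones,1)$, with all nodes at distance $1$, and let $D$ be a single node. All vehicles can be used in parallel, so $\tsp_{\nrtrucks,\nrdrones}(C)=2$. In any solution of makespan $2$, each truck visits at most one node, because two nodes already cost $4$ by \eqref{eq:one_truck_star}. Hence drones visit at least $\nrdrones\ge1$ nodes, and the adversary damages one of them. The second stage then costs $2$, so \multitsphome\ has makespan $4$. For OPT, a truck visits the damaged node and the remaining nodes are split among the other trucks and the drones, one node per vehicle. This has makespan $\max\{2,2/\alpha\}=2$, which gives ratio $2$.

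\emph{Case $\alpha<1$.} Take $\tilde{\mathcal{G}}(\nrtrucks-1,\nrdrones+\lfloor 1/\alpha\rfloor,1,\alpha)$. Level-1 nodes lie at distance $1$ and level-2 nodes at distance $\alpha$; if $\nrtrucks=1$, there are no level-1 nodes. A feasible first-stage solution of makespan $2$ is the following. The first $\nrtrucks-1$ trucks each take one level-1 node. The last truck takes $\lfloor1/\alpha\rfloor$ level-2 nodes, at cost $2\alpha\lfloor1/\alpha\rfloor\le2$. Each drone takes one level-2 node, at cost $2\alpha/\alpha=2$. The matching lower bound $2$ holds because any tour has length at least $2\cdot\min$ distance and drones are slower.

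The key structural claim is that every makespan-$2$ solution has some drone visiting a level-2 node. We argue as follows.
\begin{itemize}
\item A drone cannot visit a level-1 node, since that costs $2/\alpha>2$.
\item Hence all $\nrtrucks-1$ level-1 nodes are visited by trucks.
\item A truck that visits a level-1 node cannot visit any further node, since that costs at least $2+2\alpha$.
\item So at most one truck is free for level-2 nodes, and by \eqref{eq:one_truck_star2} it covers at most $\lfloor1/\alpha\rfloor$ of them.
\end{itemize}
At least $\nrdrones\ge1$ level-2 nodes therefore fall to drones, and the adversary damages one of them. The second stage then costs $2\alpha$, so \multitsphome\ has makespan $2+2\alpha$. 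For OPT, the free truck takes the damaged node together with $\lfloor1/\alpha\rfloor-1$ other level-2 nodes, and the drones take the remaining $\nrdrones$ level-2 nodes. This has makespan $2$, which gives ratio $1+\alpha$.

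The main obstacle is this structural claim: the construction must be robust to whatever optimal tie-breaking \multitsphome\ uses, so that a drone-visited node whose truck cost is $2\min\{1,\alpha\}$ always exists. This is exactly why the number of level-2 nodes is padded with $\lfloor1/\alpha\rfloor$. The counting argument via \eqref{eq:one_truck_star2} and \eqref{eq:one_drone_star2} should settle it, but it needs careful checking, including the edge case $\nrtrucks=1$.
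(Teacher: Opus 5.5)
Your case $\alpha<1$ is sound, and it is simpler than the paper's proximate/medium/far construction. Your case $\alpha>1$, however, breaks down for fast drones. On $\gstar(\nrtrucks+\nrdrones,1)$ a drone covers $j$ nodes in time $2j/\alpha$. So the claim ``all vehicles can be used in parallel, so $\tsp_{\nrtrucks,\nrdrones}(C)=2$'' is false as soon as $\alpha>1+\lceil \nrtrucks/\nrdrones\rceil$. In that regime the drones alone cover all $\nrtrucks+\nrdrones$ nodes in time $\frac{2}{\alpha}\lceil\frac{\nrtrucks+\nrdrones}{\nrdrones}\rceil<2$, while every truck visit costs $2$. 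Hence the first stage of \multitsphome\ uses no truck at all, and your instance only yields the ratio $1+\frac{1}{\alpha}\lceil\frac{\nrtrucks+\nrdrones}{\nrdrones}\rceil<2$. For example, take $\nrtrucks=\nrdrones=1$ and $\alpha=3$: the drone visits both nodes in time $4/3$, the truck then serves the damaged node in time $2$, and the ratio is $5/3$. Choosing $D$ differently does not rescue this graph. If $D\neq\emptyset$, the second stage costs at most $\tsp_{\nrtrucks,0}(D)\le\OPT(I^*)$, and the first stage costs strictly less than $2\le\OPT(I^*)$. Your observation that each truck visits at most one node in a makespan-$2$ solution is true but irrelevant, because the optimum lies below $2$.

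What is missing is padding that saturates the fast vehicles, so that they cannot absorb the unit-distance nodes. This is exactly what you did for the trucks when $\alpha<1$. It is also what the paper's far-distance nodes $F$ and medium-distance nodes $M$ achieve: each fast vehicle gets a tour of length exactly $T$.

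The mirror image of your $\alpha<1$ instance repairs the argument. Take $\tilde{\mathcal{G}}(\nrtrucks+\lfloor\alpha\rfloor,\nrdrones-1,1,\alpha)$. Level-2 nodes cost a drone exactly $2$ and a truck $2\alpha>2$. So in a makespan-$2$ solution, $\nrdrones-1$ drones are each pinned to a single level-2 node, and every truck visits at most one level-1 node. Hence at least $\lfloor\alpha\rfloor\ge1$ level-1 nodes go to the one free drone. The bound $\tsp_{\nrtrucks,\nrdrones}(C)\ge2$ holds because level-2 nodes exist when $\nrdrones\ge2$, and when $\nrdrones=1$ a lone drone would need $2(\nrtrucks+\lfloor\alpha\rfloor)/\alpha>2$. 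Damaging one drone-visited level-1 node then gives $\multitsphome(I)=4$ and $\OPT(I^*)=2$.

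Finally, in your case $\alpha<1$ the stated reason for the lower bound $2$ (``$2\cdot$min distance, drones are slower'') does not suffice, since a truck serves a level-2 node in $2\alpha<2$. The correct argument is by counting. In time $<2$ no drone visits anything. So either some truck visits a level-1 node, at cost $2$, or, when $\nrtrucks=1$, the single truck covers all level-2 nodes at cost $2\alpha(\nrdrones+\lfloor1/\alpha\rfloor)>2$.
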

\begin{proof}
	See Online Appendix~\ref{proof:lemma:kdkv_LB_2new2}.
\end{proof}

\begin{restatable}{lemma}{lemmawcbigalphaoptimistic}\label{lemma:wc-bigalpha1-2}
	For any $\alpha<1,\nrtrucks,\nrdrones$ such that $\alpha \left\lceil\frac{\nrdrones}{\nrtrucks} \right\rceil\ge 1$, there exists and instance $I$ with
	$\frac{\multitsphome(I)}{\OPT(I^*)}=2$.
\end{restatable}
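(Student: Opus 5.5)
The plan is to build a two-level star instance in the spirit of Lemma~\ref{lemma:relations_graphs2}, and to place the damage adversarially. Let $q=\left\lceil\frac{\nrdrones}{\nrtrucks}\right\rceil$. The instance should make \multitsphome\ hand to the drones, in its \initial\ stage, exactly the nodes that a truck must later serve. The offline optimum should instead swap roles: the trucks visit the damaged nodes, and the drones visit the nodes the trucks covered in \multitsphome. The hypothesis $\alpha q\ge 1$ is what should make this swap cost-neutral. A group of $q$ slow drones, flying in parallel, covers in time $T$ at least as much as one truck does, since $q\cdot\alpha T\ge T$ in total distance. Here $\alpha<1$ guarantees that, in the \initial\ stage, trucks and drones are not interchangeable node by node.

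\textbf{Step 1 (first stage).} I would take a graph $\tilde{\mathcal{G}}(n_1,n_2,d_1,d_2)$ with level-1 nodes $W_1$ (to be truck nodes) and level-2 nodes $W_2$ (to be drone nodes). I would tune the parameters so that $\tsp_{\nrtrucks,\nrdrones}(C)=T$, and so that in \emph{every} optimal solution $S^*_{\nrtrucks,\nrdrones}(C)$ all of $W_2$ is visited by drones only. The averaging argument from the proof of Lemma~\ref{lemma:relations_graphs2} gives the lower bound. The observation that every optimal tour must then have length exactly $T$ fixes the allocation. For instance, a truck picking up a level-2 node on top of its level-1 load exceeds $T$, and a drone cannot absorb a level-1 node within $T$ because $\alpha<1$. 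This independence from tie-breaking is essential, because \multitsphome\ is an arbitrary deterministic choice among optimal solutions.

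\textbf{Step 2 (damage and second stage).} I would set $D=W_2$, or a suitable subset of it chosen so that $D^d=D$. Then \multitsphome$(I)=T+\tsp_{\nrtrucks,0}(D)$ by \eqref{multitsphome-makespanformula}, and Lemma~\ref{lemma:relations_graphs} evaluates $\tsp_{\nrtrucks,0}(D)$ on the star. The target is $\tsp_{\nrtrucks,0}(D)=T$.

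\textbf{Step 3 (offline optimum).} I would exhibit a full-information solution of makespan $T$: each truck serves about $|D|/\nrtrucks$ damaged level-2 nodes, and the drones, $q$ or $q-1$ per truck, share the level-1 nodes. Lemma~\ref{lem-boundOPT} then gives $\OPT(I^*)\ge \max\{\tsp_{\nrtrucks,\nrdrones}(C),\tsp_{\nrtrucks,0}(D)\}=T$, so $\OPT(I^*)=T$ and the ratio is exactly $2$.

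\textbf{Main obstacle.} The hard part is choosing the distances $d_1,d_2$ and the counts $n_1,n_2$ so that three equalities hold exactly and simultaneously for arbitrary real $\alpha<1$: first-stage makespan $T$, second-stage makespan $T$, and an offline schedule of makespan $T$. Integrality is the problem, since truck and drone loads come in integer numbers of star nodes. I would first try making every level-1 node its own distance class, one per truck, at distance $d_1$ with $2d_1=T$. I would then set the level-2 distances so that a drone's single round trip $\frac{2d_2}{\alpha}=T$, while a truck serves $D$ in $\tsp_{\nrtrucks,0}(D)=2d_2\lceil|D|/\nrtrucks\rceil=T$. For the offline schedule, I would replace each level-1 node by a cluster of $q$ nodes at distance $\alpha T/2$, one per drone. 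The condition $\alpha q\ge1$ is then exactly what keeps a single truck from covering such a cluster more cheaply in the \initial\ stage. If exact equality fails for irrational $\alpha$, I would fall back on unequal level-2 distances within each drone's share, or on multiplying all node counts by a common factor $f$ as in \eqref{eq:mtsp_two_levelstar}.
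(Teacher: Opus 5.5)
\textbf{Genuine gap: Step 1 contradicts Step 3.} Step~1 requires that \emph{every} optimal $S^*_{\nrtrucks,\nrdrones}(C)$ gives $W_2$ to drones only. Step~3 requires $\OPT(I^*)=T$ with trucks serving $D\subseteq W_2$. These cannot both hold. Since $\multitsphome(I)=\tsp_{\nrtrucks,\nrdrones}(C)+\tsp_{\nrtrucks,0}(D^d)$, and both summands are at most $\OPT(I^*)$ (Lemma~\ref{lem-boundOPT} and \eqref{eq:morenodes}), a ratio of exactly $2$ forces $\OPT(I^*)=\tsp_{\nrtrucks,\nrdrones}(C)$. But then your Step~3 offline schedule is a feasible, hence optimal, \tspmnname\ solution on $C$. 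In it, trucks visit the nonempty set $D\subseteq W_2$ (if $D$ were empty the ratio would be at most $1$). That is exactly what Step~1 says no optimal solution does. So any instance whose first-stage allocation is independent of tie-breaking yields a ratio strictly below $2$.

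Your ``first try'' shows this concretely. A drone needs $T/\alpha>T$ for a level-1 node at distance $T/2$, so $\OPT(I^*)\ge\min\{(1+\alpha)T,T/\alpha\}>T$. The cluster fallback puts all nodes at distance $\alpha T/2$, which silently abandons Step~1, so ``$D=W_2$'' no longer identifies drone-visited nodes. Its counts also fail: with $q\nrtrucks+\nrdrones$ such nodes, a makespan-$T$ first stage needs $q\le\lfloor 1/\alpha\rfloor$, which together with $\alpha q\ge 1$ forces $q=1/\alpha$.

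\textbf{The missing idea.} It is the reverse of your tie-breaking concern. \multitsphome\ is deterministic and its first stage does not depend on $D$, so the adversary may choose $D$ \emph{after} seeing which nodes the policy handed to drones. The paper uses the symmetric star $\tilde{\mathcal{G}}(\nrdrones+\lceil 1/\alpha\rceil\nrtrucks,1)$, where only the \emph{counts} are forced. Since $4/\alpha>2\lceil 1/\alpha\rceil$ for $\alpha<1$, each drone takes exactly one node and each truck exactly $\lceil 1/\alpha\rceil$ nodes, so $\tsp_{\nrtrucks,\nrdrones}(C)=2\lceil 1/\alpha\rceil$. It then damages $n'=\min\{\nrdrones,\lceil 1/\alpha\rceil\nrtrucks\}$ of the drone-visited nodes. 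In the second stage some truck must revisit at least $\lceil 1/\alpha\rceil$ nodes; when $n'=\nrdrones$, this is exactly where $\lceil\nrdrones/\nrtrucks\rceil\ge 1/\alpha$ is used. Offline, the trucks take the damaged nodes (at most $\lceil 1/\alpha\rceil$ each, padded with undamaged ones) and the drones take the rest, one each, for makespan $2\lceil 1/\alpha\rceil$. This also removes your integrality obstacle: drone trips need not be tight, and $2/\alpha\le 2\lceil 1/\alpha\rceil$ is all that is required.
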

\begin{proof}
	See Online Appendix~\ref{proof:lemma:wc-bigalpha1optimistic}.
\end{proof}

\begin{restatable}{lemma}{lammawcratiosmaller}\label{lemma:wc-ratiosmaller2new}
	For any $\alpha>0 $ and $\nrtrucks, \nrdrones \in \mathbb{N} $ with $\alpha\left\lceil \frac{\nrdrones}{\nrtrucks}\right \rceil<1$,
	there exists an instance $I$ with $\frac{\multitsphome(I)}{\OPT(I^*)} = 1+\left \lceil \frac{\nrdrones}{\nrtrucks}\right \rceil\alpha$.
\end{restatable}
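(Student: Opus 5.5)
The plan is to use a one-level star $\tilde{\mathcal{G}}(n,\alpha)$, i.e., all edge lengths equal to $\alpha$, and to let the adversary damage exactly the nodes that the drones inspect in the \initial\ stage of \multitsphome. On this graph a truck round trip $(\depot,v,\depot)$ costs $2\alpha$ and a drone round trip costs $2$. Assume first that $r:=\frac{1}{\alpha}\in\mathbb{N}$, and take $n=r\nrtrucks+\nrdrones$ nodes.

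First I compute $\tsp_{\nrtrucks,\nrdrones}(C)=2$.
\begin{itemize}
\item \emph{Upper bound.} Each truck visits $r$ nodes, at cost $2\alpha r=2$, and each drone visits one node, at cost $2$.
\item \emph{Lower bound.} I reuse the averaging argument of Lemma~\ref{lemma:relations_graphs2}, with \eqref{eq:one_truck_star} and \eqref{eq:one_drone_star}. Suppose the makespan were $<2$. Then each truck covers fewer than $r$ nodes and each drone covers fewer than $1$ node, i.e., none. So fewer than $r\nrtrucks+\nrdrones$ nodes are visited, a contradiction.
\end{itemize}

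Next I show the adversary can force $|\Ddrone|=\nrdrones$, which is the delicate step. \multitsphome\ fixes some optimal $S^*_{\nrtrucks,\nrdrones}(C)$. Since truck tours have makespan $2$, they visit at most $r\nrtrucks$ nodes in total. The other $\ge \nrdrones$ nodes are therefore visited only by drones, and each drone visits at most one node. Hence exactly $\nrdrones$ nodes are visited by drones alone. The adversary declares exactly these nodes damaged, so $\Ddrone=D$ with $|D|=\nrdrones$. By \eqref{eq:truck_only_star}, the \secondary\ stage costs $\tsp_{\nrtrucks,0}(\Ddrone)=2\alpha\lceil\frac{\nrdrones}{\nrtrucks}\rceil$. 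Thus, by \eqref{multitsphome-makespanformula},
\[
\multitsphome(I)=2+2\alpha\left\lceil\frac{\nrdrones}{\nrtrucks}\right\rceil .
\]

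Then I show $\OPT(I^*)=2$. The lower bound follows from Lemma~\ref{lem-boundOPT}. For the upper bound:
\begin{itemize}
\item Distribute the $\nrdrones$ damaged nodes over the trucks, at most $q:=\lceil\frac{\nrdrones}{\nrtrucks}\rceil$ per truck. This is feasible because $q\alpha<1$ gives $q\le r$.
\item Let the $\nrdrones$ drones each take one undamaged node.
\item That leaves $r\nrtrucks+\nrdrones-\nrdrones-\nrdrones=r\nrtrucks-\nrdrones$ undamaged nodes. Fill these into the free truck slots. Each truck has $r$ slots in total, so there are exactly $r\nrtrucks-\nrdrones$ free slots.
\end{itemize}
Every truck then visits exactly $r$ nodes, at cost $2$, so the ratio is $1+q\alpha$.

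The remaining obstacle is non-integral $\frac{1}{\alpha}$. I would first try to put the nodes at distance $d=\frac{1}{M}$ for a large integer $M$, with $n=\lfloor M/\alpha\rfloor\nrtrucks+\nrdrones\cdot M$ nodes. The goal is to keep the first-stage makespan exactly $2$ and the damaged drone nodes at $M$ per drone, and then recheck that the truck capacity $\lfloor M/\alpha\rfloor$ still absorbs $qM$ damaged nodes plus the filler. Should exact equality fail there, the fallback is a two-level star $\tilde{\mathcal{G}}(n_1,n_2,d_1,d_2)$ with one level at a truck-convenient distance and one at a drone-convenient distance, tuned via Lemma~\ref{lemma:relations_graphs} so that every truck and every drone tour is tight at makespan $2$. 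The adversarial damage argument would be repeated unchanged.
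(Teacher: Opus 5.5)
Your argument is correct when $\frac{1}{\alpha}\in\mathbb{N}$; it is the paper's instance with all edge lengths multiplied by $\alpha$. The lemma, however, is claimed for every $\alpha$ with $\alpha\lceil\frac{\nrdrones}{\nrtrucks}\rceil<1$ (e.g.\ $\alpha=0.4$, $\nrtrucks=\nrdrones=1$). For non-integral $\frac{1}{\alpha}$ you only sketch fallbacks, so as written the proof does not cover the statement.

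Your first fallback, with large $M$, would actually fail. If every drone inspects $M$ identical small nodes, the trucks can split the $M\nrdrones$ damaged drone-only nodes almost evenly in the \secondary\ stage. That instance family therefore yields at most the ratio $1+\alpha\lceil M\nrdrones/\nrtrucks\rceil/M$. This is strictly below $1+\alpha\lceil\frac{\nrdrones}{\nrtrucks}\rceil$ as soon as $M\ge\nrtrucks$ and $\nrtrucks\nmid\nrdrones$. The ceiling in the target ratio comes precisely from each drone's inspected load being one indivisible node, so refining the nodes destroys it. The two-level-star fallback is not worked out.

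The missing observation is that the truck tours never need to be tight, because the drone round trip alone pins the \initial-stage makespan. Keep your star with edge length $\alpha$, but take $r:=\lfloor\frac{1}{\alpha}\rfloor$ and $n=r\nrtrucks+\nrdrones$; up to scaling, this is your first fallback with $M=1$.
\begin{itemize}
\item Since $2\alpha j\le 2$ forces $j\le\frac{1}{\alpha}$, any truck tour of length at most $2$ visits at most $r$ nodes.
\item A drone tour of length below $2$ visits no node, and one of length at most $2$ visits at most one.
\end{itemize}
Hence both bounds on $\tsp_{\nrtrucks,\nrdrones}(C)$ go through verbatim: $\ge 2$, and $\le 2$ because truck tours now cost $2\alpha r\le 2$. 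So does your forcing step: $\Ddrone$ consists of exactly $\nrdrones$ drone-only nodes. Finally, $q\le r$ follows from $q<\frac{1}{\alpha}$ and the integrality of $q$, so your slot-filling still gives $\OPT(I^*)=2$ and the ratio $1+q\alpha$. This is precisely the paper's proof, which uses $\tilde{\mathcal{G}}(\nrdrones+\lfloor\frac{1}{\alpha}\rfloor\nrtrucks,1)$ with \initial-stage makespan $\frac{2}{\alpha}$.
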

\begin{proof}
	See Online Appendix~\ref{proof:lemma:wc-ratiosmaller2new}.
\end{proof}

\subsubsection{Lower bounds on $\sigma(\singletsphome)$}\label{sec-lowerbounds_singletsp}

\begin{lemma}\label{lemma:wc-alpha1}
	For $\alpha=1$, $\nrdrones\in \mathbb{N}$ and $\nrtrucks\ge \alpha\left\lceil\frac{\nrdrones}{\nrtrucks}\right\rceil$
	there exists an instance $I$ with $\frac{\singletsphome(I)}{\OPT(I^*)}=1+\alpha \left\lceil \frac{\nrdrones}{\nrtrucks}\right\rceil$.
\end{lemma}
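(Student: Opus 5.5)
We would construct the extreme instance on a star graph $\tilde{\mathcal{G}}(n,1)$ with $\alpha=1$. Write $q:=\left\lceil\frac{\nrdrones}{\nrtrucks}\right\rceil$. The idea, following the mechanism described in Section~\ref{sec:overview}, is to concentrate the damage on a single truck tour of the initial plan of \singletsphome. Then no node of that tour can be skipped, and the truck must revisit every leaf that its drones inspected. The offline optimum, by contrast, spreads these damaged leaves over $q\le \nrtrucks$ different trucks; this is exactly where the hypothesis $\nrtrucks\ge q$ is used.

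The steps, in order, are as follows.
\begin{enumerate}
\item Fix the graph. We choose the number of leaves so that every tour in $S^*_{\nrtrucks,0}(C)$ contains exactly $1+q$ leaves, or at least so that some tour receiving $q$ drones contains $1+q$ leaves. By \eqref{eq:truck_only_star} this gives $\tsp_{\nrtrucks,0}(C)=2(1+q)$.
\item Describe the first stage on that tour $s^*_i$. The makespan-minimal split gives the truck one leaf and each of the $q$ drones one leaf, each as a round trip of length $2$. Since $\alpha=1$, at time $1$ the truck reaches its leaf and, at the same moment, every drone reaches its leaf and reveals its status.
\item Choose the damage adversarially after observing the deterministic split: $D$ consists of the $q$ drone-inspected leaves of $s^*_i$. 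At replanning, the truck must visit these $q$ leaves by itself. It needs $1$ to return to $\depot$ and then $2q$ for the star round trips, so
\begin{equation*}
\singletsphome(I)=2+2q .
\end{equation*}
This matches the upper bound of Lemma~\ref{lem-singletsphome} and hence the bound $2(1+q)$.
\item Show $\OPT(I^*)=2$. The $q\le\nrtrucks$ damaged leaves go to $q$ distinct trucks as single round trips. The remaining leaves must be covered with makespan $2$ by the other trucks and all drones, and Lemma~\ref{lem-boundOPT} gives the matching lower bound $2$. The ratio is then $\frac{2+2q}{2}=1+q=1+\alpha q$.
\end{enumerate}

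The main obstacle is reconciling steps 1 and 4. If every tour of the truck-only optimum has $1+q$ unit leaves, there are $\nrtrucks(1+q)$ leaves in total. Covering the undamaged ones within makespan $2$ then needs $\nrtrucks(1+q)-q$ vehicles, which exceeds $\nrtrucks+\nrdrones-q$ whenever $\nrtrucks\nmid\nrdrones$. Taking exactly $\nrtrucks+\nrdrones$ leaves makes $\OPT(I^*)=2$ easy. In that case, however, the optimal truck-only tours need not be balanced, and the tour carrying $q$ drones might have only $q$ leaves, leaving a drone idle.

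To close this gap, our first attempt would be to keep $\nrtrucks+\nrdrones$ unit leaves and add padding leaves at a tiny distance $\varepsilon$ from $\depot$. Their purpose is to force the truck-only optimum to be balanced in the number of unit leaves per truck. We would then exploit that the adversary may choose, after seeing the plan, a tour that has $q$ drones and $1+q$ unit leaves. Such a tour exists by a counting argument: the drones are distributed evenly, and the number of unit leaves per tour is at least what is needed. Finally, we would let $\varepsilon\to 0$, or absorb the $\varepsilon$-terms into both makespans so that the ratio is exactly $1+q$. If this counting fails for some residues of $\nrdrones$ modulo $\nrtrucks$, we would instead tune the leaf distances, using the two-level star $\tilde{\mathcal{G}}(n_1,n_2,d_1,d_2)$. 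The aim is that the drone segments of the chosen tour have total truck-length exactly $2q$ while the truck segment has length $2$, keeping $\OPT=2$ via Lemma~\ref{lemma:relations_graphs2}-type averaging.
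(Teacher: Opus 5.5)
Your proposal has a genuine gap, and it is the one you flag yourself. Steps 1--3 need a tour of the initial plan that both contains $1+q$ unit leaves and carries $q$ drones, and neither of your repairs establishes that such a tour exists. The plan $S^*_{\nrtrucks,0}(C)$ and the ``as evenly as possible'' drone assignment are chosen by the policy, not by the adversary.

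Take $\nrtrucks=2$, $\nrdrones=1$, so $q=1$. With $\nrtrucks+\nrdrones=3$ unit leaves, any optimal truck-only plan splits them $2/1$, and $\varepsilon$-padding cannot change that. The policy may then give the single drone to the tour with one unit leaf, so your counting argument is false in general. In addition, letting $\varepsilon\to 0$ only yields a supremum, whereas the lemma asserts an instance with ratio exactly $1+q$. The two-level-star fallback is not worked out. As written, your argument is complete only when $\nrtrucks\mid\nrdrones$, where every tour has exactly $1+q$ leaves and exactly $q$ drones.

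The missing observation is that you do not need the drones on the chosen tour at all. By definition, the truck's segment $\segmenttour_i^1$ always begins with $v_{(1)}$. After replanning, the truck must visit every damaged node of $s^*_i$ that it has not yet visited.

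So keep exactly $\gstar(\nrtrucks+\nrdrones,1)$. Then $\tsp_{\nrtrucks,0}(C)=2(1+q)$, and by pigeonhole some tour $s^*_i$ contains exactly $1+q$ leaves. Declare damaged all leaves of $s^*_i$ except $v_{(1)}$.

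However many drones $s^*_i$ received and however it was split, truck $i$ then visits all $1+q$ leaves. Each is a round trip of length $2$ through $\depot$, so $\singletsphome(I)\ge 2(1+q)$, with equality by Lemma~\ref{lem-singletsphome}. Since $|D|=q\le\nrtrucks$, we get $\OPT(I^*)=2$. This is exactly the paper's construction, and it removes your obstacle entirely.
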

\begin{proof}
	Consider an instance $I$ with depot node $v_0$ and nodes $C$ forming a star graph $\gstar(\nrtrucks+ \nrdrones,1)$.
	Observe that if up to $\nrtrucks$ nodes are damaged, using \eqref{eq:mtsp_two_levelstar} we have $\OPT(I^*)=2$.

	In the \initial\ phase of $\singletsphome$, truck tours that visit all nodes are built. Each of these tours contains either $1+\alpha\left\lfloor\frac{\nrdrones}{\nrtrucks}\right\rfloor$ or $1+\alpha\left\lceil\frac{\nrdrones}{\nrtrucks}\right\rceil$ non-depot nodes, and there is at least one tour with exactly $1+\alpha\left\lceil\frac{\nrdrones}{\nrtrucks}\right\rceil$ non-depot nodes and makespan $\tsp_{\nrtrucks,0}(C)=2(1+\alpha\left\lceil\frac{\nrdrones}{\nrtrucks}\right\rceil)$.
	Consider an instance $I$, in which all but the first non-depot node
	initially assigned to one such truck (and its supporting drones) are damaged. Then, $\singletsphome= 2(1+\left\lceil\frac{\nrdrones}{\nrtrucks}\right\rceil)$, and $\frac{\singletsphome(I)}{\OPT(I^*)}=1+\alpha \left\lceil \frac{\nrdrones}{\nrtrucks}\right\rceil$.
\end{proof}

\begin{restatable}{lemma}{lemmawcbigalpha}\label{lemma:wc-bigalpha1-1}
	For any $\alpha=\frac{1}{b}, b\in \mathbb{N},\nrtrucks,\nrdrones\in\mathbb{N}$ with $\nrtrucks\ge \left\lceil\frac{\nrdrones}{\nrtrucks}\right\rceil+b-1$,
	there exists an instance $I$ with $\frac{\singletsphome(I)}{\OPT(I^*)}=1+\alpha \left\lceil \frac{\nrdrones}{\nrtrucks}\right\rceil$.
\end{restatable}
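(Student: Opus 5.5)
The plan is to reuse the idea of Lemma~\ref{lemma:wc-alpha1}, but with a graph in which slow drones ($\alpha=\frac1b$) can be used at full efficiency by the optimum. Write $q:=\left\lceil\frac{\nrdrones}{\nrtrucks}\right\rceil$, so the target ratio is $1+\frac qb$. I will take a graph where the optimal makespan is pinned to $2b$ and a truck tour of \singletsphome\ can be forced to last $2(b+q)$.

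\emph{Candidate instance.} I would first try a star $\gstar(n,1)$ with unit spokes. Here a truck serves one node in time $2$ and a drone in time $2b$. In time $2b$ the fleet covers at most $\nrtrucks b+\nrdrones$ nodes. Take $n=\nrtrucks b+\nrdrones$, which gives $\tsp_{\nrtrucks,\nrdrones}(C)=2b$: trucks take $b$ nodes each and drones one each. The average-length argument of Lemma~\ref{lemma:relations_graphs2} gives the matching lower bound.

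If this star cannot be made to work (see the obstacle below), the fallback is the two-level star $\gstar(\nrtrucks b,\nrdrones b,1,\alpha)$. For it, \eqref{eq:mtsp_two_levelstar} with $f=b$ gives $\tsp_{\nrtrucks,\nrdrones}(C)=2b$ directly. Its finer granularity (level-2 spokes cost a truck only $2/b$) gives more freedom in shaping tour lengths.

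\emph{Run of \singletsphome.} By \eqref{eq:truck_only_star}, the initial plan consists of $\nrtrucks$ truck tours with at most $\lceil n/\nrtrucks\rceil=b+q$ nodes each. Some tour $s^*_i$ receives $q$ drones. In the makespan-minimizing split of $s^*_i$, the truck takes a segment of about $b$ nodes (time $\approx 2b$) and each drone takes one node (time $2b$).

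The adversary declares damaged all nodes of $s^*_i$ that are assigned to its drones, and all nodes of $s^*_i$ not reached before replanning. At replanning (time $\approx 2b$) the truck must therefore still visit the remaining $q$ nodes, finishing at $2(b+q)$. All other nodes are undamaged.

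\emph{Bounding $\OPT(I^*)$.} I then show $\OPT(I^*)=2b$. The damaged set has at most $b+q$ nodes and must be spread over trucks so that each truck serves at most $b$ nodes. The remaining capacity of trucks and drones must then absorb the undamaged nodes within time $2b$. I expect the hypothesis $\nrtrucks\ge q+b-1$ to be exactly what makes this reallocation possible: enough trucks exist to take the damaged nodes of the sacrificed tour, while the drones take undamaged nodes. The count is again by averaging, as in Lemma~\ref{lemma:relations_graphs2}. Lemma~\ref{lem-boundOPT} supplies the lower bound $2b$. The ratio is then $\frac{2(b+q)}{2b}=1+\alpha q$.

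\emph{Main obstacle.} The hard step is guaranteeing that the tour receiving $q$ drones really has $b+q$ nodes. The initial plan $S^*_{\nrtrucks,0}(C)$ is an arbitrary optimal solution, and the $q$ drones might be attached to a shorter tour with $b+q-1$ nodes. My first attempt is to choose $n$ so that every truck tour has exactly $b+q$ nodes, and then recheck the $\OPT$ bound using the slack provided by $\nrtrucks\ge q+b-1$. If that fails, I would switch to the two-level star, where balanced tours can be enforced. A second, related check is the exact form of the segment split: I must confirm that ties in the minimum-makespan split still leave $q$ undiscovered damaged nodes for the truck at replanning.
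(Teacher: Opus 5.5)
Your instance $\gstar(b\nrtrucks+\nrdrones,1)$ and the argument for $\OPT(I^*)=\tsp_{\nrtrucks,\nrdrones}(C)=2b$ match the paper. However, the step you flag as the main obstacle is a real gap, and neither remedy closes it. Your argument is complete when $\nrtrucks\mid\nrdrones$, because then every tour has exactly $b+q$ nodes and $q$ drones. Otherwise, the tour that receives $q$ drones need not contain $b+q$ nodes.

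Take $\nrtrucks=3$, $\nrdrones=4$, $b=2$, so $q=2$ and the hypothesis holds with equality. The initial plan may have tours with $4,3,3$ nodes, with the two drones attached to a $3$-node tour. Damaging that tour's drone nodes makes its truck return at $6$. The $4$-node tour with one drone also finishes at $6$ if undamaged. So you get ratio $6/4$ instead of $2$.

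Enlarging $n$ to $\nrtrucks(b+q)$ destroys $\OPT(I^*)=2b$. Any solution with makespan below $2b+2\le 4b$ lets each truck cover at most $b$ nodes and each drone at most one. That is at most $b\nrtrucks+\nrdrones<\nrtrucks(b+q)$ nodes, so $\OPT(I^*)\ge 2b+2$ and the ratio is at most $\frac{b+q}{b+1}$.

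The two-level star fails as well, because there $\tsp_{\nrtrucks,0}(C)$ can be strictly below $2(b+q)$. For $\nrtrucks=2$, $\nrdrones=1$, $b=2$ it equals $5<6$, so the ratio is at most $5/4<3/2$.

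The missing idea, which is exactly the paper's proof, is to choose the sacrificed tour by its size rather than its drone count.
\begin{itemize}
\item Since $\tsp_{\nrtrucks,0}(C)=2\lceil n/\nrtrucks\rceil=2(b+q)$, some tour $s^*_i$ of the initial plan has exactly $b+q$ nodes.
\item The paper declares damaged every node of $s^*_i$ except its first one, which lies in the truck's segment.
\item The truck of $s^*_i$ must then visit all $b+q$ nodes of $s^*_i$. It covers its own segment before replanning. Every other node is either damaged and seen only by a drone, or not yet visited, so it lies in $\Ddrone_i\cup C^d_i$.
\item On a unit star each visit costs exactly $2$ regardless of order, by \eqref{eq:one_truck_star}. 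So this truck returns at $2(b+q)$ whether $s^*_i$ received $q$, $q-1$ or no drones, and wherever the split falls. This also removes your second worry about ties in the split.
\item The damaged set has $b+q-1$ nodes. The hypothesis $\nrtrucks\ge q+b-1$ gives $b+q-1\le b\nrtrucks$, so the trucks can absorb all damaged nodes within time $2b$ and $\OPT(I^*)=2b$. This is where the hypothesis is used.
\end{itemize}
The ratio $\frac{2(b+q)}{2b}=1+\alpha q$ follows.
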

\begin{proof}
	See Online Appendix~\ref{proof:lemma:wc-bigalpha1-1}.
\end{proof}

\begin{restatable}{lemma}{lemmawcbigalphasingletsp}\label{lemma:wc-bigalpha1part2-singletsp}
	For any $\alpha\in \mathbb{N},\nrtrucks,\nrdrones\in\mathbb{N}$ with $\nrtrucks\ge \left\lceil\frac{\alpha\nrdrones}{\nrtrucks}\right\rceil$,
	there exists an instance $I$ with $\frac{\singletsphome(I)}{\OPT(I^*)}=1+\left\lceil \frac{\alpha\nrdrones}{\nrtrucks}\right\rceil$.
\end{restatable}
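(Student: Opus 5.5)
I would prove the statement with a worst-case instance built on a unit star graph, reusing the pattern of Lemma~\ref{lemma:wc-alpha1}. Write $q:=\left\lceil\frac{\alpha\nrdrones}{\nrtrucks}\right\rceil$. Take the depot $\depot$ and $N:=\nrtrucks+\alpha\nrdrones$ villages forming $\gstar(N,1)$; since $\alpha\in\NN$, $N$ is an integer. The target is $\OPT(I^*)=2$ together with $\singletsphome(I)=2(1+q)$.

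\textbf{Step 1 (the optimum).} I will show $\OPT(I^*)=2$ whenever $|D|\le\nrtrucks$. Each truck serves one damaged node, or an arbitrary node if there are too few damaged ones, in time $2$. Each drone makes $\alpha$ round trips of duration $2/\alpha$, which also takes time $2$, so the drones cover the remaining $N-\nrtrucks=\alpha\nrdrones$ nodes. Every tour must visit at least one node at distance $1$, so $2$ is also a lower bound, and hence the optimum equals $2$.

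\textbf{Step 2 (the plan of \singletsphome).} By \eqref{eq:truck_only_star}, $\tsp_{\nrtrucks,0}(C)=2\lceil N/\nrtrucks\rceil=2(1+q)$. So the initial plan $S^*_{\nrtrucks,0}(C)$ contains a tour $s^*_i$ with exactly $1+q$ villages; any truck-only tour with fewer villages would lower the ceiling. The adversary fixes this tour and makes damaged the $q$ villages on it that are not in the truck's initial segment. The hypothesis $\nrtrucks\ge q$ guarantees $|D|=q\le\nrtrucks$, so Step 1 gives $\OPT(I^*)=2$. No other tour contains damaged nodes, and in \singletsphome\ truck $i$ is the only truck responsible for $s^*_i$ before and after replanning. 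Truck $i$ therefore first traverses its segment of $j\ge1$ villages, costing $2j$, and then visits the $q$ damaged villages, each a separate round trip of length $2$. This yields $\singletsphome(I)\ge 2j+2q\ge2(1+q)$. Combined with Lemma~\ref{lem-singletsphome}, we get equality and the ratio $1+q$.

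\textbf{Main obstacle.} The difficult step is guaranteeing $j\ge1$, i.e.\ that the makespan-minimizing segmentation gives truck $i$ at least one village. If the truck segment were empty, replanning would happen at the depot. The truck would then only serve the $q$ damaged nodes, finishing at $2q$, and the construction would lose a factor. I would first try to show that the empty-truck split is never strictly better. A segmentation with truck segment empty has makespan at least $\frac{2}{\alpha}\left\lceil\frac{1+q}{d_i}\right\rceil$, where $d_i\le\lceil\nrdrones/\nrtrucks\rceil$ is the number of drones on $s^*_i$. Moving one village to the truck costs the truck only $2$. I would use the relation $q\le\alpha\lceil\nrdrones/\nrtrucks\rceil$ to check that this move never increases the makespan, and then exploit deterministic tie-breaking adversarially.

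If this check fails in some parameter range, I would use a fallback. Damage all $1+q$ villages of $s^*_i$ and remove one village elsewhere, so that the count $|D|\le\nrtrucks$ still holds. Alternatively, I would pick among the tours with $1+q$ villages one that receives only $\lfloor\nrdrones/\nrtrucks\rfloor$ drones, which forces a nonempty truck segment.
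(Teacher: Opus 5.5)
Your instance and Step~1 match the paper's proof: the star $\gstar(\nrtrucks+\alpha\nrdrones,1)$, with $\OPT(I^*)=2$ whenever $|D|\le\nrtrucks$. Step~2, however, has a gap, because you define the damage set relative to the unknown length $j$ of the truck's segment. ``The $q$ villages not in the truck's initial segment'' exist only if $j=1$. For $j\ge2$ only $1+q-j<q$ villages lie outside the segment, and your bound $\singletsphome(I)\ge 2j+2q$ would then exceed the upper bound $2(1+q)$ of Lemma~\ref{lem-singletsphome}. Such $j$ do occur. For $\nrtrucks=3$, $\nrdrones=1$, $\alpha=3$, every tour has two villages and two tours receive no drone, so on those the truck segment is the whole tour. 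For $\nrtrucks=2$, $\nrdrones=3$, $\alpha=1$, a three-village tour with one drone has $j=1$ and $j=2$ tied at makespan $4$. The missing idea, which is exactly the paper's argument, is to fix $D$ independently of the segmentation: damage every village of $s^*_i$ except the first one, $v_{(1)}$. The truck always receives the segment adjacent to the depot, $(\depot,v_{(1)},\ldots)$, so it visits $v_{(1)},\ldots,v_{(j)}$ for some $j\ge1$ and reaches $v_{(j)}$ at time $2j-1$. At replanning, $v_{(j+1)},\ldots,v_{(1+q)}$ are damaged and not visited by the truck. It therefore finishes at $2j-1+1+2(1+q-j)=2(1+q)$ whatever $j$ is, while $|D|=q\le\nrtrucks$ keeps $\OPT(I^*)=2$.

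With this choice your ``main obstacle'' disappears, and your plan for handling it would not have worked. An empty truck segment is not part of the policy, since the truck's tour always starts with $v_{(1)}$. Even if it were allowed, replanning would occur at time $0$ with nothing explored. Then $C^d_i$ would contain every village of $s^*_i$, and the truck would visit all $1+q$ of them, not only the $q$ damaged ones, which it cannot yet identify. The check you intended is also false in general. For $\nrtrucks=4$, $\nrdrones=1$, $\alpha=5$ (so $q=2$), a three-village tour holding the drone has value $6/5$ with an empty truck segment, but $2$ once the truck takes one village. Fallback (i) cannot keep $|D|\le\nrtrucks$ when $q=\nrtrucks$, and then $\OPT(I^*)\ge4$. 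Fallback (ii) is unavailable when all $(1+q)$-village tours receive $\lceil\nrdrones/\nrtrucks\rceil>\lfloor\nrdrones/\nrtrucks\rfloor$ drones. Finally, in Step~1 the lower bound $\OPT(I^*)\ge2$ does not follow from ``every tour visits a node at distance $1$'', since a drone round trip takes only $2/\alpha$. It holds because $q\ge1$ damaged villages exist and each requires a truck round trip of length $2$.
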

\begin{proof}
	See Online Appendix~\ref{proof:lemma:wc-bigalpha1part2-singletsp}.
\end{proof}

\subsubsection{Lower bounds on the competitive ratio for any online policy $\ALG$}\label{sec-lowerbounds}

\begin{lemma}\label{lem-1/alphainN}
	Consider $\nrtrucks, \nrdrones \in \NN$ and $\alpha>0$ with $\frac{1}{\alpha} \in \mathbb{N}$, i.e., the truck speed is an integer multiple of the drone speed.
	For any online policy $\policy$, there is an instance $I$ such that $\frac{ALG(I)}{\OPT(I^*)} \ge \min \{2,1+ \left \lceil \frac{\nrdrones}{\nrtrucks} \right\rceil \alpha\}$.
\end{lemma}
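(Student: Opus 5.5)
The plan is an adversary argument on a star graph, generalizing the idea of Lemma~\ref{lem:alpha=1var2}. Write $b=\frac1\alpha\in\NN$ and $q=\left\lceil\frac{\nrdrones}{\nrtrucks}\right\rceil$. I would use $\gstar(n,1)$ with $n=\nrtrucks b+\nrdrones$ leaves.

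\emph{Value of the offline optimum.} By \eqref{eq:one_truck_star}, \eqref{eq:one_drone_star} and an averaging argument as in Lemma~\ref{lemma:relations_graphs2}, we get $\tsp_{\nrtrucks,\nrdrones}(C)=2b$. This is attained by letting every drone serve one leaf (time $2b$) and every truck serve $b$ leaves. Consequently, whenever the damaged set satisfies $|D|\le \nrtrucks b$, the trucks can be assigned all damaged leaves and the drones only undamaged ones. Hence $\OPT(I^*)=2b$ by Lemma~\ref{lem-boundOPT}.

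\emph{Adversary.} The rule is: a leaf whose status is first revealed by a drone is declared damaged, as long as fewer than $\nrtrucks b$ leaves have been declared damaged; every other leaf, including every leaf first reached by a truck, is declared undamaged.

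\emph{Case A: the budget $\nrtrucks b$ is never exhausted.} Then every leaf is either first visited by a truck or damaged, so the trucks must visit all $n$ leaves. Each truck visit costs a round trip of length $2$ in the star. Hence
\[
\ALG(I)\ge 2\left\lceil \frac{n}{\nrtrucks}\right\rceil = 2\left(b+q\right)=2b(1+q\alpha),
\]
since $b$ is an integer. This gives the ratio $1+q\alpha\ge\min\{2,1+q\alpha\}$. This is the place where integrality of $\frac1\alpha$ is used: it makes the ceiling split as $b+q$.

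\emph{Case B: the budget is exhausted.} Then drones have first-visited at least $\nrtrucks b$ leaves, each costing a drone round trip of $2b$. So the time $T$ at which the $\nrtrucks b$-th damaged leaf is revealed satisfies $T\ge 2b\left\lceil \frac{\nrtrucks b}{\nrdrones}\right\rceil$. After time $T$ the trucks still have to visit all $\nrtrucks b$ damaged leaves, except those visited by trucks already. Here I need the target $2$ (i.e.\ $\ALG(I)\ge 4b$ when $q\alpha\ge1$), or $1+q\alpha$ otherwise. I would argue via a potential: count the total truck work that must be done in $[0,\ALG(I)]$, namely $2$ per leaf first seen by a truck plus $2$ per damaged leaf. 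Then lower bound the finishing time of the last damaged leaf's service by $T$ plus the return trip.

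\emph{Expected obstacle.} The main obstacle is exactly Case B, together with the interplay with trucks that idle or wait for drone information. A naive bound only gives $T+2$, which is too weak. If needed, I would modify the construction as in the appendix proofs of Lemmata~\ref{lemma:wc-bigalpha1-2} and \ref{lemma:wc-ratiosmaller2new}. Specifically, I would scale the number of leaves by a factor $f$, i.e.\ $n=f(\nrtrucks b+\nrdrones)$, so that the budget becomes $f\nrtrucks b$ and $\OPT(I^*)=2bf$. I would also let the adversary reveal damage only at the latest possible moment, so that a single damaged leaf revealed near time $\OPT(I^*)$ forces an extra round trip. For the $\min=2$ regime ($q\alpha\ge 1$), I would try the simpler variant $\gstar(\nrtrucks+\nrdrones,1)$ with the adversary placing damage at the last leaf explored by a drone. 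In that variant $\OPT(I^*)$ equals $2$ when $\alpha=1$ and scales accordingly for $b>1$. I would check that the ALG time exceeds $2\,\OPT(I^*)$ up to the ceiling effects, which again cancel because $b\in\NN$.
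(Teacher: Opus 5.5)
Your star $\gstar(\nrtrucks b+\nrdrones,1)$, the value $\OPT(I^*)=2b$ for $|D|\le \nrtrucks b$, your adaptive adversary and your Case A are all correct. They coincide with the first case of the paper's proof, including the use of $b\in\NN$ to get $\lceil n/\nrtrucks\rceil=b+q$.

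\textbf{The gap is Case B, which you leave open.} The potential/timing argument is never carried out, and you note yourself that $T+2$ is too weak. The missing idea is that no timing argument is needed, because the target is a minimum with $2$. You may assume $\ALG(I)<4b=2\,\OPT(I^*)$, since otherwise you are done. A drone that reaches two leaves, or one leaf twice, travels at least distance $4$ and so needs time at least $4/\alpha=4b$. Under the assumption, every drone therefore reaches at most one leaf, and the number $n'$ of leaves first reached by drones satisfies $n'\le \nrdrones$.

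Case B then closes by counting alone:
\begin{enumerate}
\item The $\nrtrucks b$ damaged leaves are all first reached by drones, and each must also be visited by a truck.
\item The $n-n'\ge \nrtrucks b+\nrdrones-\nrdrones=\nrtrucks b$ leaves not first reached by a drone must also be visited by trucks.
\item These two sets are disjoint, so trucks visit at least $2\nrtrucks b$ distinct leaves. Some truck visits at least $2b$ of them, so its tour has length at least $4b$, contradicting the assumption.
\end{enumerate}
Idling or waiting for drone information plays no role, since only the number of distinct leaves per truck matters. The same restriction shows that Case B can only occur when $\nrdrones\ge \nrtrucks b$, i.e.\ $q\alpha\ge 1$, so the target there really is $2$. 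For $\nrdrones<\nrtrucks b$ you are automatically in Case A.

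\textbf{The fallback modifications you sketch are unnecessary and, as stated, would not work.}

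Scaling the star by $f$ weakens exactly the restriction above. With $\OPT(I^*)=2bf$, a drone can reach up to $2f-1$ leaves before time $2\,\OPT(I^*)$. The counting then no longer forces a truck with $2bf$ leaves, e.g.\ when $\nrdrones\ge\nrtrucks$. Case A also only yields $1+\alpha\lceil f\nrdrones/\nrtrucks\rceil/f$, which can be smaller than $1+q\alpha$.

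The variant $\gstar(\nrtrucks+\nrdrones,1)$ with a single damaged leaf fails for $b\ge 2$. Take $\nrtrucks=1$, $\nrdrones=2$, $b=2$, where $\min\{2,1+q\alpha\}=2$. Send the truck to one leaf and each drone to another. All statuses are revealed at time $2$, when the truck is back at the depot, and the truck serves a damaged drone leaf by time $4=\OPT(I^*)$. This policy has ratio $1$.

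A minor point: a drone first reaches its $j$-th leaf at time $(2j-1)b$, not $2jb$, so your bound on $T$ is also slightly off.
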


\begin{proof}
	Let $b:=\frac{1}{\alpha}$. Consider an instance $I$  with $|D|\le \nrtrucks b$ and the depot node $v_0$ and non-depot nodes $C$ forming a star graph $\gstar(\nrdrones+b\nrtrucks,1)$.
	Let  us first look at the full information situation.
	If $|D|\le \nrtrucks b$, each drone visits one non-depot node and each truck visits $b$ non-depot nodes in an optimal solution, resulting in a makespan of $\OPThome(I^*)=\max\{2b,2b\}=2b$.

	Let $\policy$ be an  online policy with a competitive $\sigma(ALG)<2$.
	In the constructed instance, denote the total number of non-depot nodes visited by drones in $\policy$ as $n'$.
	Observe that $n'\leq k^d$.
	Note that if a drone visits more than one non-depot node, its tour duration would be $\ge 2 \frac{2}{\alpha}= 2\cdot 2b$, leading to a competitive ratio of at least $2$.

	Assume that $\min\{n', b\nrtrucks\}$ of the nodes visited by drones are damaged.
	That means that these need to be visited by trucks as well. Consider two cases:

	If $n'\le b\nrtrucks$, this means that in \policy\ all nodes are visited by trucks: trucks visit nodes $(k^d+bk^t-n')$ and revisit $\min\{n', b\nrtrucks\}$ damaged nodes  originally visited by the drones. Since $\nrtrucks$ trucks visit $\nrdrones+b\nrtrucks$ nodes,  there is at least one truck that visits $\left\lceil \frac{\nrdrones+b\nrtrucks}{\nrtrucks}\right\rceil=  b + \left\lceil\frac{\nrdrones}{\nrtrucks}\right\rceil$ nodes. From \eqref{eq:one_truck_star},  $\policy(I)\ge 2(b + \left\lceil\frac{\nrdrones}{\nrtrucks}\right\rceil)$ and $\frac{ALG(I)}{\OPT(I^*)}\ge 1+ \left\lceil\frac{\nrdrones}{\nrtrucks}\right\rceil \alpha$.

	If $n'> b\nrtrucks$, this means that the trucks visit in total $2b\nrtrucks$ nodes in $\policy$, leading to a makespan of $4b$ and a competitive ratio of $\frac{ALG(I)}{\OPT(I^*)}\ge 2$.
\end{proof}

\begin{restatable}{lemma}{lemalphainn}\label{lem-alphainN}
	Consider $\nrtrucks, \nrdrones \in \NN$ and $\alpha\in \mathbb{N}$, i.e., the drone speed is an integer multiple of the truck speed.
	For any policy $\policy$ there is an instance $I$ such that $\frac{ALG(I)}{\OPT(I^*)}\ge 2-\frac{1}{2\alpha}$.
\end{restatable}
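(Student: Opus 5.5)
The plan is an adversary argument on a star graph, mirroring the construction of Lemma~\ref{lem-1/alphainN} with the roles of truck and drone speeds reversed. Take the instance whose depot $\depot$ and nodes $C$ form $\gstar(\nrtrucks+\alpha\nrdrones,1)$, and let the adversary declare at most $\nrtrucks$ nodes damaged. In the full-information counterpart, each truck serves one leaf and each drone serves $\alpha$ leaves, provided the damaged leaves are among those given to the trucks. By \eqref{eq:one_truck_star} and \eqref{eq:one_drone_star} this gives $\OPThome(I^*)\le\max\{2,\frac{2}{\alpha}\alpha\}=2$. It therefore suffices to show that every deterministic $\policy$ can be forced to makespan at least $4-\frac{1}{\alpha}$.

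The first step restricts $\policy$. Assume $\policy(I)<4-\frac1\alpha$, since otherwise we are done. Then no truck can serve two leaves sequentially, because that costs $4$ by \eqref{eq:one_truck_star}. Likewise no drone can visit $2\alpha$ or more leaves, since that costs $\ge 4$. Counting leaves then forces the drones to explore at least $\alpha\nrdrones$ leaves in total.

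The second step is the adversary rule. Whenever a drone is the first vehicle to reach a leaf while no truck is already committed to that leaf, and fewer than $\nrtrucks$ damages have been declared, the adversary declares that leaf damaged; all other leaves are undamaged. Because $\policy$ is deterministic, the adversary can simulate it. Each such damaged leaf must later be reached by a truck. A truck that reaches a damaged leaf at or after its reveal time $\tau$ finishes no earlier than $\tau+1$. If it additionally serves another leaf, it finishes no earlier than about $4$, minus the time it could pre-position before $\tau$.

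The third step is a counting argument. Trucks must cover the $\nrtrucks$ damaged leaves plus any leaves the drones did not explore. Since drones visit at most $2\alpha-1$ leaves each within the time budget, with visit times $\frac{1}{\alpha},\frac{3}{\alpha},\dots$, either some truck must handle two distinct leaves, or the drones must revisit leaves so late that a damaged leaf is revealed only at time $\tau\ge 2-\frac1\alpha$ or later. I would split into these two cases and in each derive $\policy(I)\ge 4-\frac1\alpha$. This yields $\frac{\policy(I)}{\OPThome(I^*)}\ge 2-\frac{1}{2\alpha}$.

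The main obstacle is the timing bookkeeping in the first case. A truck may leave the depot speculatively toward a leaf that a drone is about to reveal, and so recover part of a unit of travel. The loss $\frac{1}{2\alpha}$ in the bound should be exactly the maximal such head start: half of a drone's single-leaf round trip $\frac2\alpha$. I would first try to make this precise by letting the adversary always damage the drone-revealed leaf that no truck is heading toward. A truck's movement toward a leaf is visible to the adversary, so any truck already en route toward a leaf can be ``wasted'' by declaring that leaf undamaged. This then bounds the usable head start by the drone's outbound flight time $\frac1\alpha$ split over the round trip.
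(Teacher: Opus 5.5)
Your proposal has a genuine gap: the adversary commits to damage at the wrong moment, and the dichotomy in your third step is asserted without proof and is in fact false for your rule. Damaging the first drone-revealed leaf that no truck is heading to can reveal the damage as early as time $\frac{1}{\alpha}$, which helps the policy.

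Concretely, take $\alpha=2$ and $\nrtrucks=\nrdrones=1$. The star $\gstar(3,1)$ has leaves $A,B,C$, and $\OPThome(I^*)\le 2$. Let the drone visit $A,B,C$ in turn, arriving at times $0.5,1.5,2.5$ and returning at $3$. Let the truck wait at $\depot$ and serve the first leaf reported damaged. Your rule declares $A$ damaged at time $0.5$. The truck reaches $A$ at $1.5$ and is back at $2.5$, so the makespan is $3<4-\frac1\alpha=3.5$.

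In this run no truck serves two leaves, and the damage is revealed long before $2-\frac1\alpha$. So neither branch of your case split applies. Counting alone cannot rescue it, because within the budget the drones may visit up to $(2\alpha-1)\nrdrones$ leaves, which here covers every leaf.

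The missing idea is to let the adversary stay silent until the critical time $\theta=\frac{2\alpha-1}{\alpha}=2-\frac1\alpha$. This is the earliest time a drone can reach its $\alpha$-th leaf. Simulate the deterministic policy up to $\theta$ with every revealed leaf undamaged.

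Strictly before $\theta$, each drone has revealed at most $\alpha-1$ leaves, and each truck has visited at most one leaf. Split the trucks into $T_0$ (no leaf visited yet) and $T_1$ (one leaf visited). Then at least $\nrdrones+|T_0|$ leaves are neither truck-visited nor revealed before $\theta$.

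A truck in $T_0$ sits on a single spoke, so it is at distance $\ge 1$ from all leaves but at most one. Hence some candidate leaf is at distance $\ge1$ from every truck in $T_0$, and the adversary damages exactly that leaf. This is consistent with the simulated run, since nothing damaged was revealed before $\theta$.

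Now a $T_0$ truck cannot finish before $\theta+2$. A $T_1$ truck was at its leaf at some time $\ge1$, so at $\theta$ it is at distance $\ge 3-\theta$ from the damaged leaf and cannot finish before $4$. Therefore $\policy(I)\ge\theta+2=4-\frac1\alpha$.

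This also corrects your diagnosis of where the $\frac{1}{2\alpha}$ loss comes from. It does not measure a truck's speculative head start; that head start is neutralized completely, because the adversary sees every truck's position at $\theta$ when choosing the leaf. The loss comes solely from the reveal time $\theta$ falling $\frac1\alpha$ short of $2$.
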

\begin{proof}
	See Online Appendix~\ref{proof:lem-alphainN}.
\end{proof}

Lemma~\ref{lem-1/alphainN} and Lemma~\ref{lem-alphainN} only apply to integer drone speed $\alpha$ (or $\alpha$ with $\frac{1}{\alpha}$, respectively), because the graph constructions in the respective examples are only possible in these cases. Note that we can, however, also analyze the $\multitsphome$ policy on the there-constructed graphs if $\alpha$ does not take the value that the construction was meant to, obtaining (slightly) weaker bounds.

Lemma~\ref{lem-alpha1-genlower-new} relies on a similar idea: on the instance constructed there (in dependence of $\nrtrucks$ and $\nrdrones$) we can prove a tight bound if $\alpha=1$, but it also provides bounds for other values of $\alpha$.

\begin{restatable}{lemma}{lemalphagenlowernew}\label{lem-alpha1-genlower-new}
	Consider $\nrtrucks, \nrdrones \in \NN$ and $\alpha>0$.
	For any policy $\policy$ there is an instance $I$ with $\nrtrucks$ trucks, $\nrdrones$ drones, and drone speed $\alpha$ such that $\frac{ALG(I)}{\OPT(I^*)}\ge \min\{\frac{2}{\alpha},2\alpha\}$.
\end{restatable}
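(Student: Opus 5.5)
The idea is to reuse the adversarial star construction from Lemma~\ref{lem:alpha=1var2} and Lemma~\ref{lem-1/alphainN}, and to let the revealed damage depend on which vehicle type reaches a node first. Take the star graph $\gstar(\nrtrucks+\nrdrones,1)$. First observe that the claim is only non-trivial when $\min\{\frac{2}{\alpha},2\alpha\}>1$, i.e.\ for $\alpha\in(\frac12,2)$, since every ratio is at least $1$. Concretely, for $\alpha\le\frac12$ we have $\min\{\frac{2}{\alpha},2\alpha\}=2\alpha\le 1$, and for $\alpha\ge 2$ we have $\min\{\frac{2}{\alpha},2\alpha\}=\frac{2}{\alpha}\le 1$. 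So I restrict attention to $\frac12<\alpha<2$. Throughout I only consider instances with $|D|\le\nrtrucks$. For these, the solution in which every vehicle serves exactly one leaf and the trucks take the damaged leaves gives $\OPT(I^*)\le\max\{2,\frac{2}{\alpha}\}$, by Lemma~\ref{lemma:relations_graphs}.

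\emph{Adversary.} The status of a leaf is fixed when it is first visited. If a drone is strictly first to arrive at a leaf, and fewer than $\nrtrucks$ damaged leaves have been declared so far, the leaf is declared damaged. Every other leaf is declared undamaged. Because \policy\ is deterministic, this defines a single instance $I$, and the answers are consistent with the order in which \policy\ discovers statuses.

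\emph{Case analysis.}
\begin{itemize}
\item[(a)] No leaf is first visited by a drone. Then the $\nrtrucks$ trucks cover all $\nrtrucks+\nrdrones$ leaves, so some truck serves at least two leaves. By \eqref{eq:one_truck_star} this gives $\policy(I)\ge 4$, and hence a ratio of at least $4/\max\{2,\frac{2}{\alpha}\}=\min\{2,2\alpha\}$.
\item[(b)] Some damaged leaf $v$ is first reached by a drone. That drone needs time at least $1/\alpha$ to reach $v$, and a truck must visit $v$ strictly later. That truck also has to be accounted for among the other leaves. If it visits no other leaf, then the remaining leaves fall to $\nrtrucks-1$ trucks and $\nrdrones$ drones, one of which already spent its first leg on $v$. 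Counting leaves per vehicle, as in the proof of Lemma~\ref{lem-1/alphainN}, some vehicle must make two leaf round trips. This costs at least $\min\{4,\frac{4}{\alpha}\}$, or at least $\frac{2}{\alpha}+2$ for a drone making a second trip.
\item[(c)] Drones first-visit more than $\nrtrucks$ leaves. Then some drone makes at least two round trips, so $\policy(I)\ge\frac{4}{\alpha}$.
\end{itemize}
In each case, dividing by $\max\{2,\frac{2}{\alpha}\}$ gives at least $\min\{\frac{2}{\alpha},2\alpha\}$. One checks this separately for $\alpha\le1$ and for $\alpha\ge1$: for example, $\frac{4}{\alpha}/2=\frac{2}{\alpha}$, and $4/\frac{2}{\alpha}=2\alpha$.

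The main obstacle is case (b). A policy can send a truck toward $v$ simultaneously with the drone, and it can let drones and trucks share leaves in an interleaved way. A clean counting argument therefore has to charge each drone-first-visited damaged leaf to a truck leg of length $2$ that is performed in addition to the trucks' own one-leaf budget. My first attempt would be a pigeonhole bound on total work. Total truck work is at least $2\cdot(\#\text{truck-first leaves}+\#\text{damaged drone-first leaves})$, and total drone work is at least $\frac{2}{\alpha}\cdot\#\text{drone-first leaves}$. Averaging over the vehicles, as in the proof of Lemma~\ref{lemma:relations_graphs2}, should show that some vehicle exceeds the required threshold unless no leaf is drone-first, which is case (a). If this averaging turns out too weak at the boundary, I would enlarge the star to $\gstar(\nrtrucks+\nrdrones+1,1)$ or tune the damage cap so that the counts divide evenly.
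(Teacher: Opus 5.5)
Your proof is, at its core, the paper's argument. It uses the same star $\gstar(\nrtrucks+\nrdrones,1)$ with $|D|\le\nrtrucks$, the same reduction to $\frac12<\alpha<2$, and the same pigeonhole step. Once a leaf first reached by a drone is damaged, the $\nrtrucks+\nrdrones$ vehicles must account for $\nrtrucks+\nrdrones+1$ vehicle--leaf visits, so some vehicle visits two leaves and pays at least $\min\{4,\frac{4}{\alpha}\}$. The paper phrases this as a contradiction: it computes $\OPT(I^*)$ exactly on $[1,2]$ and on $(\frac12,1]$ separately, and simply ``assumes'' that a drone-visited node is damaged. Your explicit adaptive adversary, together with the uniform bound $\OPT(I^*)\le\max\{2,\frac{2}{\alpha}\}$, handles the online aspect more carefully and treats both ranges at once. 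That is a genuine gain in rigor.

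Cases (a) and (b) already form a complete proof, and the rest should be removed because parts of it are wrong.

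Case (c) is false as stated. If $\nrdrones>\nrtrucks$, the drones can be strictly first at more than $\nrtrucks$ leaves while each makes only one trip. It is also unnecessary: under your rule the earliest drone-first leaf is always declared damaged, so (a) and (b) are exhaustive.

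The aside that a drone making a second trip costs at least $\frac{2}{\alpha}+2$ is also wrong. Such a drone costs $\frac{4}{\alpha}$, which is smaller than $\frac{2}{\alpha}+2$ when $\alpha>1$. The bound $\min\{4,\frac{4}{\alpha}\}$ that you actually use is correct.

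Finally, the ``main obstacle'' you describe in case (b) does not exist. The count is over distinct vehicle--leaf incidences in the final schedule, not over time. Your strict-first rule guarantees that the truck's visit to $v$ is an additional incidence, no matter when trucks are dispatched or how visits interleave. So no averaging of total work and no enlarged star is needed.
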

\begin{proof}
	See Online Appendix~\ref{proof:lem-alpha1-genlower-new}.
\end{proof}

\section{Experimental evaluation}\label{sec:experiments}
This section reports the \textit{observed} performance of the $\multitsphome$ and $\singletsphome$ based on extensive computational experiments.
We first describe benchmark instances in Section~\ref{sub:instances}. Section~\ref{sub:comp-ratio-analysis} presents the observed competitive ratios results and Section~\ref{sub:risk-analysis} outlines the observed drone-impact ratios.
We emphasize the scale of these experiments. The evaluated online policies $\multitsphome$ and $\singletsphome$, as well as the benchmark policies, require solving the strongly NP-hard problem \tspmnname\ repeatedly: once for the initial plan and after each replanning.

\subsection{Benchmark instances} \label{sub:instances}
We generate five \textit{classes} of \emph{benchmark graphs}: \emph{Random}, \emph{1-Center}, \emph{2-Center}, \emph{Coastal}, and \emph{Mountain}. The \emph{Random} class consists of complete graphs generated by sampling
node locations uniformly from the unit square and setting the first generated node as the depot.
\emph{1-Center} and \emph{2-Center}  graphs are generated following the approach of \citep{opt-approaches-for-tsp-with-drones}. \emph{1-Center} graphs have one central node, which serves as the depot, whereas \emph{2-Center} graphs have two central nodes and the depot is placed at one of them. These three classes of graphs are Euclidean, with edge weights given by Euclidean distances. \emph{Coastal} and \emph{Mountain} classes consist of metric planar graphs designed to resemble transportation networks in coastal and mountainous regions, respectively. In \emph{Coastal} graphs, villages are concentrated near a coastline, with inland villages typically aligned perpendicular to it.
In \emph{Mountain} graphs, villages are primarily located in valleys, with fewer villages at higher elevations. Online Appendix~\ref{sub:benchmarkgraphs} provides details on the graph generation process.
For each graph, the set of damaged nodes is generated according to a \textit{damage probability} $\delta$, i.e., each node is damaged with probability $\delta$.

Based on these classes of graphs, we construct four sets of benchmark instances.
\IRANDOM\ comprises 80 \emph{Random} graphs, with 20 graphs generated for each instance size of $n \in {13, 15, 18, 21}$ nodes. For each graph, we randomly independently construct five sets of damaged nodes with damage probabilities $\delta \in \{0.1, 0.3, 0.5, 0.7, 0.9\}$. We consider drone speeds of $\alpha \in \set{0.25, 0.5, 1.0, 2.0, 4.0}$ and the use of one truck and one drone, i.e., $\nrtrucks=\nrdrones=1$.
Instance set \IBASE\ contains $20$ \emph{Random} graphs (and corresponding sets of damaged nodes) with $18$ nodes and damage probability of $0.3$.
We consider drone speeds of $\alpha \in \set{0.25, 0.5, 1.0, 2.0, 4.0}$ and the use of one truck and one drone, i.e., $\nrtrucks=\nrdrones=1$.
\IGRAPHCLASS\ consists of instances of \IBASE\ and similarly generated instances of other graph classes. For each graph class, $20$ graphs with $n=18$ nodes and damage probability $\delta=0.3$ are generated, thereby the  \emph{Coastal} and \emph{Mountain} graphs have  36 edges.
We consider drone speeds of  $\alpha \in \set{0.25, 0.5, 1.0, 2.0, 4.0}$ and the use of one truck and one drone, i.e., $\nrtrucks=\nrdrones=1$.
\ILARGE\ contains the instances from \IRANDOM\ and \IGRAPHCLASS.
Finally, for the computational experiments with more than two vehicles, where solution times are considerably higher than for the case of one truck and one drone, we use instance set \ISMALL\ that contains the $13$-node graphs (and corresponding sets of damaged nodes) from \IRANDOM.
We consider drone speeds of $\alpha \in \set{0.5, 1.0, 2.0}$ and up to four vehicles, i.e., $(\nrtrucks,\nrdrones) \in \{(1,1),(1,2),(1,3),(2,1),(2,2),(3,1)\}$.

All benchmark graphs used for testing and respective generators are provided in \cite{neugebauer2026gengraph,neugebauer2026implementation}. The computational experiments were executed on a Ryzen 1700x with 8GB of RAM on Void Linux $6.12.50$ and the implementational details of the tested online policies are explained in Online Appendix~\ref{sec:implementation}.

\subsection{Competitive Ratio Analysis} \label{sub:comp-ratio-analysis}
This section investigates the performance of \multitsphome\ and \singletsphome\, focusing on whether the actually observed competitive ratios are close to the lower and upper bounds proven in Section~\ref{sec:analysis}. Section~\ref{sec:1t1d} reports  results for $k^t=1$ truck and $k^d=1$ drone on the \ILARGE\ dataset. Section~\ref{sec:sensitivity} investigates the case of $k^t=1$ truck and $k^d=1$ drone in more detail on \IBASE, analyzing the effects of damage probability, graph class, and instance size using a one-factor-at-a-time experimental design. Comparisons with alternative online policies are presented in Section~\ref{sec:comp_bench}, and Section~\ref{sub:comp-ratio-analysis:multiple} concludes with results for multiple trucks and drones.

We denote the worst observed competitive ratio as $\hat{\sigma}$ and the the median observed competitive ratio as $\sigma_{\text{med}}$.

\subsubsection{Competitive ratio for one Truck and Drone}\label{sec:1t1d}
\begin{figure}
	\centering
	\footnotesize
	\begin{tikzpicture}
		\pgfplotstableread[col sep=comma]{ratio_vs_alpha_multitsp.csv}\ratioVSalphaMultitsp
		\pgfplotstabletranspose\datatransposed{\ratioVSalphaMultitsp}
		\pgfplotstableread[col sep=comma]{ratio_vs_alpha_singletsp.csv}\ratioVSalphaSingletsp
		\pgfplotstabletranspose\datatransposedSingle{\ratioVSalphaSingletsp}
		\begin{groupplot}[
				group style = {
						group size= 2 by 1,
						ylabels at=edge left,
					},
				width=0.5\textwidth,
				height=0.33\textwidth,
				boxplot/draw direction = y,
				boxplot/box extend=0.2,
				x axis line style = {opacity=1},
				enlarge y limits,
				ymajorgrids,
				xtick = {0.25, 1, 2, 4},
				xticklabel style = {align=center, font=\footnotesize},
				extra x ticks = {0.5},
				extra x tick style={xticklabel style={yshift=-10pt, font=\small}},
				xlabel = {$\alpha$},
				xmin = 0,
				xmax = 4.75,
				ylabel = {Competitive Ratio},
				ytick = {1, 1.5, 2, 2.5, 3, 3.5, 4, 4.5},
				ymin = 1,
				ymax = 4.5,
				every boxplot/.style={
						draw=black,
						solid,
						mark=o,
					},
			]
			\nextgroupplot[title={$\multitsphome$}]
			\addplot+[boxplot={draw position=0.25}, fill=color-a0.25] table[y index=1] {\datatransposed};
			\addplot+[boxplot={draw position=0.5}, fill=color-a0.5] table[y index=2] {\datatransposed};
			\addplot+[boxplot={draw position=1}, fill=color-a1] table[y index=3] {\datatransposed};
			\addplot+[boxplot={draw position=2}, fill=color-a2] table[y index=4] {\datatransposed};
			\addplot+[boxplot={draw position=4}, fill=color-a4] table[y index=5] {\datatransposed};
			\addplot[draw=red, mark=] coordinates {(0, 1) (1, 2) (4.5, 2)};
			\nextgroupplot[title={$\singletsphome$}]
			\addplot+[boxplot={draw position=0.25}, fill=color-a0.25] table[y index=1] {\datatransposedSingle};
			\addplot+[boxplot={draw position=0.5}, fill=color-a0.5] table[y index=2] {\datatransposedSingle};
			\addplot+[boxplot={draw position=1}, fill=color-a1] table[y index=3] {\datatransposedSingle};
			\addplot+[boxplot={draw position=2}, fill=color-a2] table[y index=4] {\datatransposedSingle};
			\addplot+[boxplot={draw position=4}, fill=color-a4] table[y index=5] {\datatransposedSingle};
			\addplot[draw=red, mark=] coordinates {(0, 1) (4, 5)};
		\end{groupplot}
	\end{tikzpicture}
	\caption{
		\footnotesize{Observed competitive ratios $\hat{\sigma}$ for $\multitsphome$ and $\singletsphome$ on \ILARGE\ \\
			\textit{Note.} The red line indicates analytical competitive ratios $\sigma$ from Section~\ref{sec:analysis}.}
	}
	\label{fig:ratio11}
\end{figure}
In Figure~\ref{fig:ratio11}, red lines indicate the upper and lower bounds for the competitive ratios established in Section~\ref{sec:analysis}.
The lower and upper bounds of the boxes indicate the $25\%$ and $75\%$ quartile of the input data.
The line within a box gives the median value.
The upper whiskers terminate at the largest value smaller than the $75\%$ quantile plus $1.5$ times the interquartile range.
The lower whiskers are drawn analogously.
Further outliers are drawn as a circle.

For \multitsphome\ for all drone speeds $\alpha$ we observe a wide range of competitive ratios, spanning (almost) the whole range between $1$ (optimal performance) and the red line (worst-case performance).
\singletsphome\ performs favorably for slow drones ($\alpha\le 1$), where both $\hat{\sigma}$ and $\sigma_{\text{med}}$ are consistently lower for \singletsphome\ than for \multitsphome. In contrast, for fast drones ($\alpha > 1$), \multitsphome\ becomes advantageous: $\hat{\sigma}(\singletsphome)$ increases rapidly with $\alpha$ and exceeds 2 -- which is the competitive ratio of \multitsphome\ --already starting from $\alpha = 2$.

\subsubsection{Sensitivity analysis for one truck and one drone}\label{sec:sensitivity}
Table~\ref{tab:cr-vs-dp} examines the effects of damage probabilities, graph classes, and instance sizes in detail in a one-factor-at-a-time design around the \IBASE\ dataset (highlighted in grey).

Lower \textit{damage probabilities} appear to increase $\hat{\sigma}$ and $\sigma_{\text{med}}$ in case of fast drones ( from $\alpha>1$ for \multitsphome\ and from $\alpha\ge 1$ for \singletsphome). This is especially interesting, remembering that the worst-case instance constructed in Lemma~\ref{lemma:kdkv_LB_2new2} for \multitsphome\ had one damaged node.
\singletsphome\ achieves higher ratios for small damage percentages and, as expected, \singletsphome\ outperforms \multitsphome\ for large damage probabilities. Indeed, the truck's tour in an $\text{RDP}^*$ solution closes in on a $\tsp$ tour favoring $\singletsphome$ over $\multitsphome$.

Both \singletsphome\ and \multitsphome\ appear robust with respect to different graph classes and instance sizes, as no significant performance differences are observed across these settings.

\subsubsection{Comparison to benchmark policies}\label{sec:comp_bench}
Table~\ref{tab:cr-vs-policy} contrasts the performance of \multitsphome\ and \singletsphome\ to alternative intuitive online policies: 1.) \truckonly\, where only trucks visit the nodes, 2.) the \emph{Explore-First-Help-Second policy} (\efhshome),  where the drones visit all nodes  in the \initial\ step and, as soon as all drones have returned to the depot, the trucks visit the damaged nodes, and 3.) a variant of \efhshome\, called \emph{Explore-First-Help-ASAP} (\efhahome), where the drones start exploring and the trucks follow immediately after a damaged node is discovered by a drone.

For slow drones, \singletsphome\ outperforms the alternative policies, with \truckonly\ following at a small margin.
As expected, \efhshome\ and \efhahome\ perform poorly in this setting.

This picture changes with increasing drone speed. For $\alpha\in \{2,4\}$, \truckonly\ takes significantly longer than the remaining drone-based policies, and \efhahome\ becomes the best policy both with respect to the worst observed and median competitive ratios.
Note, however, that the competitive ratio of $\efhshome$ and $\efhahome$ may exceed $2$ for $\alpha\geq 1$ -- which is the competitive ratio for $\multitsphome$ in this case (see Lemma~\ref{lemma:efhshome}).

\begin{lemma}\label{lemma:efhshome}
	For $\alpha\in \NN$ and $k^t=k^d=1$, there are instances $I$ and $I'$, such that $\frac{\efhshome(I)}{OPT(I^*)}\ge 2+\frac{1}{2\alpha}$ and $\frac{\efhahome(I)}{OPT(I^*)}\ge 2+\frac{1}{2\alpha}$.
\end{lemma}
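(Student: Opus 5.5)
The plan is to use the same device as in Lemma~\ref{lem:alpha=1var2} and Lemma~\ref{lem-1/alphainN}: a star graph with a single damaged node whose position the adversary chooses after fixing the deterministic drone tour. Take the instance whose depot $\depot$ and nodes $C$ form $\gstar(1+\alpha,1)$, which is a valid construction because $\alpha\in\NN$. Let exactly one node be damaged, $|D|=1$. With $\nrtrucks=\nrdrones=1$ we first compute $\OPT(I^*)$. The truck makes the round trip $(\depot,d,\depot)$ to the damaged node $d$, which takes $2$. The drone visits the remaining $\alpha$ nodes, which by \eqref{eq:one_drone_star} takes $\frac{2}{\alpha}\alpha=2$. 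Hence $\OPT(I^*)\le 2$. Lemma~\ref{lem-boundOPT} together with \eqref{eq:one_truck_star} gives $\OPT(I^*)\ge \tsp_{1,0}(D)=2$, so $\OPT(I^*)=2$ whichever node is damaged.

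For \efhshome, the drone first explores all of $C$, which takes $\tsp_{0,1}(C)=\frac{2}{\alpha}(1+\alpha)=2+\frac{2}{\alpha}$ by \eqref{eq:one_drone_star}. Only after the drone returns does the truck serve $D$, which takes $\tsp_{1,0}(D)=2$. The makespan is therefore $4+\frac{2}{\alpha}$ for every choice of the damaged node. This gives the instance $I$ with ratio $2+\frac{1}{\alpha}\ge 2+\frac{1}{2\alpha}$.

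For \efhahome, the drone tour is determined before any damage is observed. Since every node other than the damaged one is undamaged, the drone's behaviour up to the moment it discovers the damaged node does not depend on where that node is. On a star, the drone's tour is some ordering of the $1+\alpha$ leaves, each reached via the depot. The $(1+\alpha)$-th (last) leaf is therefore first reached at time $\frac{1}{\alpha}\bigl(2(1+\alpha)-1\bigr)=2+\frac{1}{\alpha}$. The adversary declares exactly this last-visited node damaged, which gives instance $I'$. Until the discovery the truck has no damaged node to follow, so it sits at the depot. It then needs $2$ more time units to reach the damaged node and return. Hence $\efhahome(I')=4+\frac{1}{\alpha}$, and the ratio is $2+\frac{1}{2\alpha}$.

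The step I expect to need the most care is pinning down the semantics of \efhahome: that trucks make no move before a damage is revealed, and that a drone which might take a non-shortest or waiting route only makes the discovery later. Both facts only increase the makespan, so the lower bound survives. I would state these monotonicity remarks explicitly. Integrality of $\alpha$ is used only to make the number of leaves, $1+\alpha$, an integer.
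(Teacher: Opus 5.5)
Your proposal is correct and follows essentially the same argument as the paper. Both use the star graph $\gstar(\alpha+1,1)$ with a single damaged node and $\OPT(I^*)=2$. In both, the drone discovers the damaged node no earlier than $\frac{2\alpha+1}{\alpha}$, and the truck then needs two more time units.

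Your version is somewhat more careful than the paper's. You make the adversarial choice of the drone's last-visited leaf explicit, you also verify $\OPT(I^*)\ge 2$, and you get the sharper ratio $2+\frac{1}{\alpha}$ for \efhshome\ for every placement of the damage. That last point also covers the literal reading of the statement, in which a single instance must serve both policies.
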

\begin{proof}
	Consider instances $I$ and $I'$, in which the depot node $v_0$ and nodes $C$ form a  star graph $\gstar(\alpha+1, 1)$ and assume that one node is damaged. The optimal RDP$^*$ solution with makespan $2$ sends the truck to visit the damaged node and the drone to visit all other nodes. In contrast, under $\efhshome$ and $\efhahome$, the damaged node is discovered only when the drone reaches it, at time $\frac{2\alpha+1}{\alpha}$. Even if the truck is dispatched immediately (\efhahome), the resulting makespan is at least $\frac{2\alpha+1}{\alpha}+2$, yielding the competitive ratio of $\sigma(\efhahome)\ge 2+\frac{1}{2\alpha}$.
\end{proof}

\begin{landscape}
	\def\ps{\phantom{^\star}}
	\begin{table}
		\centering
		\scriptsize
		\begin{tabular}{lcccccccccc}
			\toprule
			                          & \multicolumn{2}{c}{$\alpha = 0.25$}                                   & \multicolumn{2}{c}{$\alpha = 0.5$}            & \multicolumn{2}{c}{$\alpha = 1$}    & \multicolumn{2}{c}{$\alpha = 2$}    & \multicolumn{2}{c}{$\alpha = 4$}                                                                                                                                                                                                                                                    \\
			\cmidrule(l){2-3} \cmidrule(l){4-5} \cmidrule(l){6-7} \cmidrule(l){8-9} \cmidrule(l){10-11}
			                          & $\multitsphome$                                                       & $\singletsphome$                              & $\multitsphome$                     & $\singletsphome$                    & $\multitsphome$                               & $\singletsphome$                              & $\multitsphome$                     & $\singletsphome$                              & $\multitsphome$                               & $\singletsphome$                              \\
			\midrule
			Damage                                                                                                                                                                                                                                                                                                                                                                                                                                                                                                              \\Percentage & \multicolumn{10}{c}{\emph{Influence of damage probability}} \\
			$0.1$                     & $1.22\ps (1.00)^\star$                                                & $\boldsymbol{1.14}\ps (1.03)\ps$              & $1.34^\star (1.20)\ps$              & $\boldsymbol{1.23}\ps (1.09)\ps$    & $1.80\ps (1.19)^\star$                        & $\boldsymbol{1.51}\ps (1.13)\ps$              & $\boldsymbol{1.93}\ps (1.31)\ps$    & $\boldsymbol{2.18}\ps \boldsymbol{(1.72)}\ps$ & $\boldsymbol{1.97}\ps \boldsymbol{(1.46)}\ps$ & $\boldsymbol{4.12}\ps \boldsymbol{(2.87)}\ps$ \\
			\rowcolor{gray!30} $0.3$  & $1.18^\star (1.07)\ps$                                                & $1.09^\star \boldsymbol{(1.04)}\ps$           & $1.40\ps (1.17)^\star$              & $1.19\ps (1.08)\ps$                 & $1.82\ps (1.46)\ps$                           & $1.38\ps \boldsymbol{(1.18)}\ps$              & $1.80\ps \boldsymbol{(1.53)}\ps$    & $1.95\ps (1.24)\ps$                           & $1.43\ps (1.31)\ps$                           & $2.27\ps (1.26)\ps$                           \\
			$0.5$                     & $1.21\ps (1.08)\ps$                                                   & $1.12\ps (1.03)\ps$                           & $1.42\ps \boldsymbol{(1.28)}\ps$    & $1.19\ps \boldsymbol{(1.13)}\ps$    & $\boldsymbol{1.90}\ps \boldsymbol{(1.53)}\ps$ & $1.43\ps (1.17)\ps$                           & $1.81\ps (1.52)\ps$                 & $1.70\ps (1.26)\ps$                           & $1.56\ps (1.30)\ps$                           & $1.74\ps (1.27)\ps$                           \\
			$0.7$                     & $\boldsymbol{1.24}\ps \boldsymbol{(1.14)}\ps$                         & $\boldsymbol{1.14}\ps (1.02)\ps$              & $1.40\ps (1.26)\ps$                 & $1.20\ps (1.06)\ps$                 & $1.80\ps (1.44)\ps$                           & $1.24\ps (1.12)\ps$                           & $1.51\ps (1.36)\ps$                 & $1.28\ps (1.08)\ps$                           & $1.28^\star (1.23)\ps$                        & $1.27\ps (1.13)\ps$                           \\
			$0.9$                     & $1.23\ps (1.13)\ps$                                                   & $\boldsymbol{1.14}\ps (1.00)^\star$           & $\boldsymbol{1.44}\ps (1.23)\ps$    & $1.14^\star (1.04)^\star$           & $1.55^\star (1.38)\ps$                        & $1.20^\star (1.04)^\star$                     & $1.45^\star (1.28)^\star$           & $1.20^\star (1.04)^\star$                     & $1.29\ps (1.20)^\star$                        & $1.20^\star (1.04)^\star$                     \\
			\midrule
			Graph class               & \multicolumn{10}{c}{\emph{Influence of graph class}}                                                                                                                                                                                                                                                                                                                                                                                                                                    \\
			\rowcolor{gray!30} Random & $1.18^\star (1.07)\ps$                                                & $1.09^\star (1.04)^\star$                     & $1.40\ps (1.17)\ps$                 & $1.19^\star (1.08)^\star$           & $1.82\ps (1.46)\ps$                           & $1.38^\star (1.18)\ps$                        & $1.80^\star (1.53)\ps$              & $1.95^\star (1.24)^\star$                     & $1.43^\star (1.31)\ps$                        & $2.27^\star (1.26)^\star$                     \\
			$1$-Center                & $1.20\ps \boldsymbol{(1.11)}\ps$                                      & $1.14\ps (1.06)\ps$                           & $\boldsymbol{1.43}\ps (1.11)\ps$    & $1.34\ps (1.10)\ps$                 & $1.71^\star (1.28)^\star$                     & $\boldsymbol{1.64}\ps \boldsymbol{(1.21)}\ps$ & $\boldsymbol{1.88}\ps (1.37)^\star$ & $\boldsymbol{2.66}\ps \boldsymbol{(1.63)}\ps$ & $1.75\ps (1.28)^\star$                        & $3.31\ps \boldsymbol{(1.87)}\ps$              \\
			$2$-Center                & $1.18^\star (1.01)\ps$                                                & $\boldsymbol{1.23}\ps \boldsymbol{(1.07)}\ps$ & $1.19^\star (1.05)^\star$           & $\boldsymbol{1.36}\ps (1.10)\ps$    & $\boldsymbol{1.91}\ps \boldsymbol{(1.62)}\ps$ & $1.40\ps (1.18)\ps$                           & $1.80^\star (1.52)\ps$              & $2.22\ps (1.40)\ps$                           & $1.48\ps (1.31)\ps$                           & $3.52\ps (1.41)\ps$                           \\
			Coastal                   & $\boldsymbol{1.22}\ps (1.09)\ps$                                      & $1.16\ps (1.05)\ps$                           & $1.41\ps (1.21)\ps$                 & $1.27\ps \boldsymbol{(1.12)}\ps$    & $1.90\ps (1.53)\ps$                           & $1.50\ps (1.20)\ps$                           & $1.80^\star (1.53)\ps$              & $2.06\ps (1.31)\ps$                           & $\boldsymbol{1.81}\ps (1.32)\ps$              & $\boldsymbol{4.09}\ps (1.46)\ps$              \\
			Mountain                  & $1.18^\star (1.00)^\star$                                             & $1.14\ps (1.06)\ps$                           & $1.42\ps \boldsymbol{(1.22)}\ps$    & $1.22\ps (1.08)^\star$              & $1.78\ps (1.48)\ps$                           & $1.44\ps (1.17)^\star$                        & $1.85\ps \boldsymbol{(1.54)}\ps$    & $2.06\ps (1.31)\ps$                           & $1.59\ps \boldsymbol{(1.34)}\ps$              & $2.37\ps (1.35)\ps$                           \\
			\midrule
			Number Nodes              & \multicolumn{10}{c}{\emph{Influence of Number of Nodes in the Graph}}                                                                                                                                                                                                                                                                                                                                                                                                                   \\
			$13$                      & $1.19\ps (1.00)^\star$                                                & $\boldsymbol{1.15}\ps (1.01)^\star$           & $1.41\ps (1.12)^\star$              & $\boldsymbol{1.32}\ps (1.02)^\star$ & $1.88\ps (1.51)\ps$                           & $1.38\ps (1.11)^\star$                        & $1.80\ps \boldsymbol{(1.61)}\ps$    & $1.84\ps \boldsymbol{(1.24)}\ps$              & $1.67\ps (1.34)\ps$                           & $\boldsymbol{2.74}\ps \boldsymbol{(1.36)}\ps$ \\
			$15$                      & $\boldsymbol{1.24}\ps (1.00)^\star$                                   & $1.07^\star (1.01)^\star$                     & $\boldsymbol{1.46}\ps (1.18)\ps$    & $1.14^\star (1.06)\ps$              & $\boldsymbol{1.95}\ps \boldsymbol{(1.55)}\ps$ & $1.37^\star (1.16)\ps$                        & $\boldsymbol{1.92}\ps (1.52)\ps$    & $1.92\ps (1.22)^\star$                        & $1.83\ps \boldsymbol{(1.36)}\ps$              & $2.25\ps (1.34)\ps$                           \\
			\rowcolor{gray!30} $18$   & $1.18^\star (1.07)\ps$                                                & $1.09\ps \boldsymbol{(1.04)}\ps$              & $1.40^\star (1.17)\ps$              & $1.19\ps (1.08)\ps$                 & $1.82\ps (1.46)^\star$                        & $1.38\ps (1.18)\ps$                           & $1.80\ps (1.53)\ps$                 & $\boldsymbol{1.95}\ps \boldsymbol{(1.24)}\ps$ & $1.43^\star (1.31)^\star$                     & $2.27\ps (1.26)\ps$                           \\
			$21$                      & $1.19\ps \boldsymbol{(1.08)}\ps$                                      & $1.14\ps \boldsymbol{(1.04)}\ps$              & $1.40^\star \boldsymbol{(1.24)}\ps$ & $1.22\ps \boldsymbol{(1.12)}\ps$    & $1.66^\star (1.50)\ps$                        & $\boldsymbol{1.49}\ps \boldsymbol{(1.20)}\ps$ & $1.74^\star (1.39)^\star$           & $1.51^\star (1.23)\ps$                        & $\boldsymbol{1.86}\ps (1.32)\ps$              & $2.08^\star (1.24)^\star$                     \\
			\bottomrule
			\multicolumn{11}{l}{\scriptsize
				The \IBASE\ dataset is highlighted in gray.
				For each entry, the first (second) value gives the worst observed (median) competitive ratios $\hat{\sigma}$($\sigma_{\text{med}}$) .
			}                                                                                                                                                                                                                                                                                                                                                                                                                                                                                                                   \\
			\multicolumn{11}{l}{\scriptsize
				For each $\alpha$, the largest (smallest) entries are marked in \textbf{bold} (with $^\star$).
			}                                                                                                                                                                                                                                                                                                                                                                                                                                                                                                                   \\
		\end{tabular}
		\caption{
			\footnotesize{Worst-observed and median competitive ratios for $\multitsphome$ and $\singletsphome$ on \ILARGE\ }
		}
		\label{tab:cr-vs-dp}
	\end{table}

	\begin{table}
		\centering
		\def\ps{\phantom{^\star}}
		\scriptsize
		\begin{tabular}{lccccc}
			\toprule
			Policy           & $\alpha = 0.25$                               & $\alpha = 0.5$                                & $\alpha = 1$                                  & $\alpha = 2$                     & $\alpha = 4$                                  \\
			\midrule
			$\multitsphome$  & $1.24\ps (1.06)\ps$                           & $1.46\ps (1.17)\ps$                           & $1.95\ps (1.49)\ps$                           & $1.92\ps (1.52)\ps$              & $1.86\ps (1.32)\ps$                           \\
			$\singletsphome$ & $1.15^\star (1.03)^\star$                     & $1.32^\star (1.07)^\star$                     & $1.49^\star (1.16)^\star$                     & $1.95\ps (1.23)\ps$              & $2.74\ps (1.27)\ps$                           \\
			$\efhahome$      & $4.82\ps (4.37)\ps$                           & $2.85\ps (2.46)\ps$                           & $1.98\ps (1.60)\ps$                           & $1.76^\star (1.22)^\star$        & $1.77^\star (1.09)^\star$                     \\
			$\efhshome$      & $\boldsymbol{5.44}\ps \boldsymbol{(4.95)}\ps$ & $\boldsymbol{3.40}\ps \boldsymbol{(3.11)}\ps$ & $\boldsymbol{2.60}\ps \boldsymbol{(2.28)}\ps$ & $2.16\ps \boldsymbol{(1.72)}\ps$ & $1.96\ps (1.36)\ps$                           \\
			$\truckonly$     & $1.19\ps (1.06)\ps$                           & $1.35\ps (1.17)\ps$                           & $1.69\ps (1.37)\ps$                           & $\boldsymbol{2.70}\ps (1.47)\ps$ & $\boldsymbol{4.58}\ps \boldsymbol{(1.47)}\ps$ \\
			\bottomrule
			\multicolumn{6}{l}{\scriptsize{
					For each entry, the first (second) value gives the worst observed (median) competitive ratios $\hat{\sigma}$($\sigma_{\text{med}}$).
			}}                                                                                                                                                                                                                                                  \\
			\multicolumn{6}{l}{\scriptsize{
					For each $\alpha$, the largest (smallest) entries are marked in \textbf{bold} (with $^\star$).
			}}                                                                                                                                                                                                                                                  \\
		\end{tabular}
		\caption{
			\footnotesize{Comparison of $\multitsphome$ and $\singletsphome$  to alternative policies on \ILARGE}
		}
		\label{tab:cr-vs-policy}
	\end{table}
\end{landscape}

\subsubsection{Multiple Vehicles} \label{sub:comp-ratio-analysis:multiple}
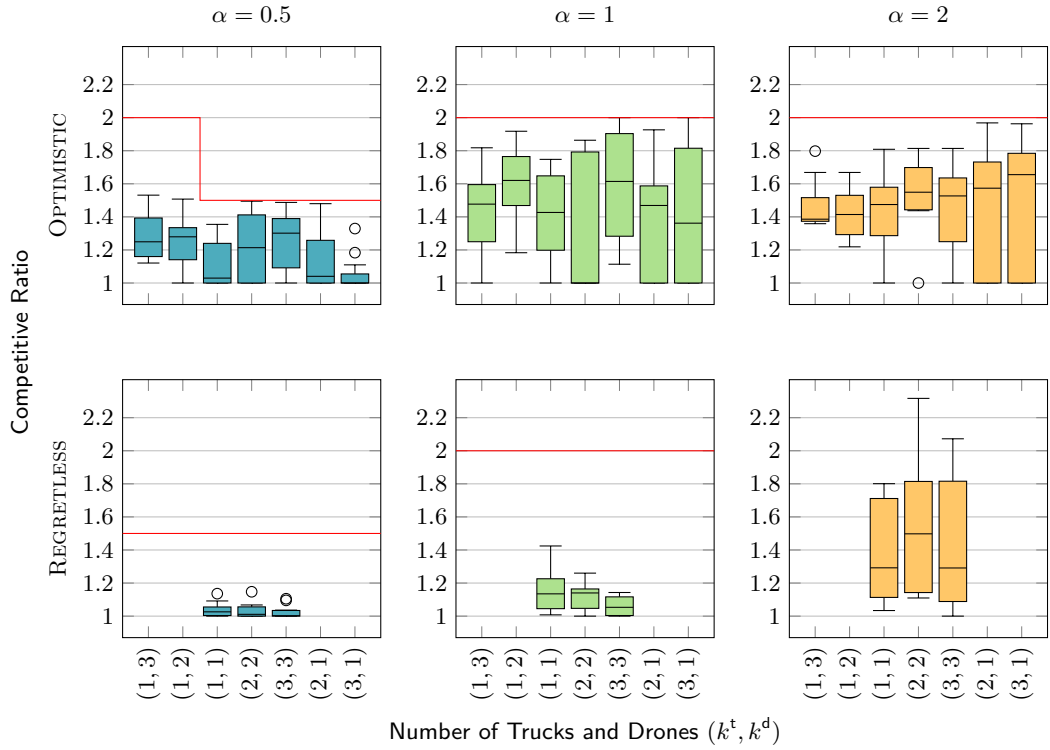
\begin{figure}
	\centering
	\footnotesize
	\begin{tikzpicture}
		\pgfplotstableread[col sep=comma]{ratio_vs_drone_trucks_multitsp.csv}\ratioVStruckdronesMultitsp
		\pgfplotstabletranspose\datatransposed{\ratioVStruckdronesMultitsp}
		\pgfplotstableread[col sep=comma]{ratio_vs_drone_trucks_singletsp.csv}\ratioVStruckdronesSingletsp
		\pgfplotstabletranspose\datatransposedSingle{\ratioVStruckdronesSingletsp}
		\begin{groupplot}[
				group style = {
						group size= 3 by 2,
						ylabels at=edge left,
					},
				width=0.33\textwidth,
				height=0.33\textwidth,
				boxplot/draw direction = y,
				x axis line style = {opacity=1},
				enlarge y limits,
				ymajorgrids,
				xtick = {1,2,3,4,5,6,7},
				xticklabel style = {align=center, font=\footnotesize, rotate=90},
				xticklabels = \empty,
				xmin=0.25,
				xmax=7.75,
				ylabel = {Competitive Ratio},
				ytick = {1, 1.2, 1.4, 1.6, 1.8, 2.0, 2.2},
				ymin = 1,
				ymax = 2.3,
				every boxplot/.style={
						draw=black,
						solid,
						mark=o,
					},
			]
			\nextgroupplot[ylabel={$\multitsphome$}, title={$\alpha = 0.5$}]
			\addplot+[boxplot={draw position=1}, fill=color-a0.5] table[y index=2] {\datatransposed};
			\addplot+[boxplot={draw position=2}, fill=color-a0.5] table[y index=6] {\datatransposed};
			\addplot+[boxplot={draw position=3}, fill=color-a0.5] table[y index=10] {\datatransposed};
			\addplot+[boxplot={draw position=4}, fill=color-a0.5] table[y index=14] {\datatransposed};
			\addplot+[boxplot={draw position=5}, fill=color-a0.5] table[y index=18] {\datatransposed};
			\addplot+[boxplot={draw position=6}, fill=color-a0.5] table[y index=22] {\datatransposed};
			\addplot+[boxplot={draw position=7}, fill=color-a0.5] table[y index=26] {\datatransposed};
			\addplot[draw=red, mark=] coordinates {(0.25, 2) (2.5,2) (2.5, 1.5) (7.75, 1.5)};
			\coordinate (top) at (rel axis cs:0,1);
			\nextgroupplot[title={$\alpha = 1$}]
			\addplot+[boxplot={draw position=1}, fill=color-a1] table[y index=3] {\datatransposed};
			\addplot+[boxplot={draw position=2}, fill=color-a1] table[y index=7] {\datatransposed};
			\addplot+[boxplot={draw position=3}, fill=color-a1] table[y index=11] {\datatransposed};
			\addplot+[boxplot={draw position=4}, fill=color-a1] table[y index=15] {\datatransposed};
			\addplot+[boxplot={draw position=5}, fill=color-a1] table[y index=19] {\datatransposed};
			\addplot+[boxplot={draw position=6}, fill=color-a1] table[y index=23] {\datatransposed};
			\addplot+[boxplot={draw position=7}, fill=color-a1] table[y index=27] {\datatransposed};
			\addplot[draw=red, mark=] coordinates {(0.25, 2) (7.75, 2)};
			\nextgroupplot[title={$\alpha = 2$}]
			\addplot+[boxplot={draw position=1}, fill=color-a2] table[y index=4] {\datatransposed};
			\addplot+[boxplot={draw position=2}, fill=color-a2] table[y index=8] {\datatransposed};
			\addplot+[boxplot={draw position=3}, fill=color-a2] table[y index=12] {\datatransposed};
			\addplot+[boxplot={draw position=4}, fill=color-a2] table[y index=16] {\datatransposed};
			\addplot+[boxplot={draw position=5}, fill=color-a2] table[y index=20] {\datatransposed};
			\addplot+[boxplot={draw position=6}, fill=color-a2] table[y index=24] {\datatransposed};
			\addplot+[boxplot={draw position=7}, fill=color-a2] table[y index=28] {\datatransposed};
			\addplot[draw=red, mark=] coordinates {(0.25, 2) (7.75, 2)};
			\nextgroupplot[ylabel={$\singletsphome$}, xticklabels = {{$(1, 3)$}, {$(1, 2)$}, {$(1, 1)$}, {$(2, 2)$}, {$(3, 3)$}, {$(2, 1)$}, {$(3, 1)$}}]
			\addplot+[boxplot={draw position=3}, fill=color-a0.5] table[y index=2] {\datatransposedSingle};
			\addplot+[boxplot={draw position=4}, fill=color-a0.5] table[y index=6] {\datatransposedSingle};
			\addplot+[boxplot={draw position=5}, fill=color-a0.5] table[y index=10] {\datatransposedSingle};
			\addplot[draw=red, mark=] coordinates {(0.25, 1.5) (7.75, 1.5)};
			\nextgroupplot[xticklabels = {{$(1, 3)$}, {$(1, 2)$}, {$(1, 1)$}, {$(2, 2)$}, {$(3, 3)$}, {$(2, 1)$}, {$(3, 1)$}}, xlabel = {Number of Trucks and Drones  $(\nrtrucks, \nrdrones)$}]
			\addplot+[boxplot={draw position=3}, fill=color-a1] table[y index=3] {\datatransposedSingle};
			\addplot+[boxplot={draw position=4}, fill=color-a1] table[y index=7] {\datatransposedSingle};
			\addplot+[boxplot={draw position=5}, fill=color-a1] table[y index=11] {\datatransposedSingle};
			\addplot[draw=red, mark=] coordinates {(0.25, 2) (7.75, 2)};
			\nextgroupplot[xticklabels = {{$(1, 3)$}, {$(1, 2)$}, {$(1, 1)$}, {$(2, 2)$}, {$(3, 3)$}, {$(2, 1)$}, {$(3, 1)$}}]
			\addplot+[boxplot={draw position=3}, fill=color-a2] table[y index=4] {\datatransposedSingle};
			\addplot+[boxplot={draw position=4}, fill=color-a2] table[y index=8] {\datatransposedSingle};
			\addplot+[boxplot={draw position=5}, fill=color-a2] table[y index=12] {\datatransposedSingle};
			\addplot[draw=red, mark=] coordinates {(0.25, 3) (7.75, 3)};
			\coordinate (bot) at (rel axis cs:1,0);
		\end{groupplot}
		\path (top-|current bounding box.west)--
		node[anchor=south,rotate=90] {Competitive Ratio}
		(bot-|current bounding box.west);
	\end{tikzpicture}
	\caption{
		\footnotesize{Competitive ratios $\hat{\sigma}$ for $\multitsphome$ and $\singletsphome$ for several trucks and drones on \ISMALL\ \\
			\textit{Note.} The red line indicates analytical competitive ratios $\sigma$ from Section~\ref{sec:analysis}.
			The red line  for $\alpha=2$ for \singletsphome\ depicts value  $3$ and is not visible in the figure.}
	}
	\label{fig:multiplevehicles_multi_tsp}
\end{figure}
As shown in Figure~\ref{fig:multiplevehicles_multi_tsp}, \singletsphome\ outperforms \multitsphome\ for small values of $\alpha$ across all  examined settings. However, for $\alpha=2$, even in this relatively small instance set, some instances exhibit a competitive ratio for \singletsphome\ exceeding 2, which is the competitive ratio for \multitsphome.
Overall, the worst-observed competitive ratios in Figure~\ref{fig:multiplevehicles_multi_tsp} lie further from the theoretical worst-case bounds, which may reflect the limited size of the tested instance set rather than inherent algorithmic behavior.

\subsection{Drone-Impact Ratio Analysis} \label{sub:risk-analysis}
Figure~\ref{fig:risk} reports the results on the observed drone-impact ratio for \ILARGE\ instances with $\nrtrucks = \nrdrones = 1$.
For $\multitsphome$, we observe for all $\alpha$ solutions with a drone-impact ratio exceeding 1. $\singletsphome$ outperforms $\multitsphome$ both in terms of the worst-observed and, for $\alpha \leq 2$, median drone-impact ratio.
In contrast, with respect to the best observed drone-impact ratio, $\multitsphome$ outperforms $\singletsphome$ for all values of $\alpha$.
Overall, the analytical ratios derived in Section~\ref{sec:analysis} closely predict the observed performance. In particular, the worst observed drone-impact ratios for both policies (except for $\alpha = 1$ in $\multitsphome$), as well as the best observed ratios for $\multitsphome$, align with the theoretical bounds.

\begin{figure}
	\centering
	\footnotesize
	\begin{tikzpicture}
		\pgfplotstableread[col sep=comma]{risk_vs_alpha_multitsp.csv}\riskVSalphaMultitsp
		\pgfplotstabletranspose\datatransposed{\riskVSalphaMultitsp}
		\pgfplotstableread[col sep=comma]{risk_vs_alpha_singletsp.csv}\riskVSalphaSingletsp
		\pgfplotstabletranspose\datatransposedSingle{\riskVSalphaSingletsp}
		\begin{groupplot}[
				group style = {
						group size= 2 by 1,
						ylabels at=edge left,
					},
				width=0.5\textwidth,
				height=0.33\textwidth,
				boxplot/draw direction = y,
				boxplot/box extend=0.2,
				x axis line style = {opacity=1},
				enlarge y limits,
				ymajorgrids,
				xtick = {0.25, 1, 2, 4},
				xticklabel style = {align=center, font=\footnotesize},
				extra x ticks = {0.5},
				extra x tick style={xticklabel style={yshift=-10pt, font=\small}},
				xlabel = {$\alpha$},
				xmin = 0,
				xmax = 4.75,
				ylabel = {Drone-Impact Ratio},
				ytick = {0.2, 0.4, 0.6, 0.8, 1, 1.2, 1.4, 1.6},
				ymin = 0.2,
				ymax = 1.6,
				every boxplot/.style={
						draw=black,
						solid,
						mark=o,
					},
			]
			\nextgroupplot[title={$\multitsphome$}]
			\addplot+[boxplot={draw position=0.25}, fill=color-a0.25] table[y index=1] {\datatransposed};
			\addplot+[boxplot={draw position=0.5}, fill=color-a0.5] table[y index=2] {\datatransposed};
			\addplot+[boxplot={draw position=1}, fill=color-a1] table[y index=3] {\datatransposed};
			\addplot+[boxplot={draw position=2}, fill=color-a2] table[y index=4] {\datatransposed};
			\addplot+[boxplot={draw position=4}, fill=color-a4] table[y index=5] {\datatransposed};
			\addplot[draw=red, mark=] coordinates {(0, 1) (1, 2)};
			\draw[smooth, domain=1:5, red, variable=\x] plot ({\x, 1/\x + 1});
			\draw[smooth, domain=0:5, variable=\x, red] plot ({\x, 1 / (1+\x)});
			\nextgroupplot[title={$\singletsphome$}]
			\addplot+[boxplot={draw position=0.25}, fill=color-a0.25] table[y index=1] {\datatransposedSingle};
			\addplot+[boxplot={draw position=0.5}, fill=color-a0.5] table[y index=2] {\datatransposedSingle};
			\addplot+[boxplot={draw position=1}, fill=color-a1] table[y index=3] {\datatransposedSingle};
			\addplot+[boxplot={draw position=2}, fill=color-a2] table[y index=4] {\datatransposedSingle};
			\addplot+[boxplot={draw position=4}, fill=color-a4] table[y index=5] {\datatransposedSingle};
			\addplot[draw=red, mark=] coordinates {(0, 1) (4.75, 1)};
			\draw[smooth, domain=0:5, variable=\x, red] plot ({\x, 1 / (1+\x)});
		\end{groupplot}
	\end{tikzpicture}
	\caption{
		\footnotesize{Observed drone-impact ratios for $\multitsphome$ and $\singletsphome$ on \ILARGE\ \\
			\textit{Note.} The red lines indicates analytical drone-impact ratios $\bar{\omega}$ and $\underline{\omega}$ from Section~\ref{sec:analysis}.}
	}
	\label{fig:risk}
\end{figure}

\section{Conclusions and future research directions}\label{sec:conclusion}

Timely relief after disasters is of crucial importance. Surveillance drones can support relief operations by providing information on the actual demand for relief supplies at potential demand locations.
However, the efficient operation of a fleet of relief distribution trucks and surveillance drones under uncertainty remains challenging:  In this paper we are able to show some general lower bounds on both the competitive ratio and drone-impact ratios, which demonstrates theoretical limitations on the performance of any deterministic online policy. Our results highlight a fundamental trade-off: an online policy that is optimal with respect to the competitive ratio need not be \emph{regretless}, even when drones are fast. In particular, the use of drones may lead to substantially worse performance compared to truck-only operations. This counterintuitive phenomenon arises from  a nontrivial trade-off between exploration and delivery efficiency.

Motivated by this observation, we study two natural policies for RDP: $\multitsphome$, which achieves the best possible competitive ratio, and $\singletsphome$, which is regretless and achieves the best possible  drone-impact ratios $\bar{\omega}$ and $\underline{\omega}$. On the positive side, we can upper-bound the worst-case performance of both investigated policies \multitsphome\ and \singletsphome. Our extensive computational experiments -- including on instances that mimic mountain and coastal regions, which are common in disaster relief operations -- show that the theoretically derived bounds \textit{closely approximate the observed performance} of the policies and, thus, provide invaluable insights for disaster relief managers.
Based on theoretical results and supported by the experimental evaluation, we conclude that \singletsphome\ performs better when drones are slow in comparison to trucks, and \multitsphome\ when drones are faster than trucks.
Several directions for future research remain. First, improved bounds on the examined ratios  may be attainable in a few settings where the current bounds are not tight. Overall, it remains an open question whether an online policy can simultaneously achieve best possible values for all three ratios: the competitive ratio and the two drone-impact ratios.
Second, alternative policies merit further investigation, especially those based on immediate reoptimization, such as \efhahome, which performs well in our experiments with fast drones. In addition, other objective functions should be considered, such as a \emph{nomadic} variant in which the makespan is defined by the completion time of the last delivery rather than the return of the final vehicle to the depot.

More broadly, solving RDP, as a first stylized model for drone-supported disaster relief distribution, is but a first step towards understanding the full complexity of the real-world relief distribution operations under uncertainty.  Future work could extend the model to incorporate demand quantities and vehicle capacities, travel range limits, fairness considerations, and multiple sources of uncertainty, including road damages and stochastic travel times. Another promising direction is the integration of imperfect probabilistic information on damages into the decision process. Such extensions may ultimately support the development of more comprehensive and practically relevant planning approaches for disaster relief operations under uncertainty.

\singlespacing
\bibliography{mybib}

\newpage
\onehalfspacing

\renewcommand{\thesection}{\Alph{section}}
\setcounter{section}{0}
\pagenumbering{roman}
\setcounter{page}{1}

\part*{Appendix}
\section{Proofs omitted in main text}
\subsection{Proofs on basic technical results}

\lemmabasic*
\begin{proof}\label{proof:lemma-basic}
	Relations~\eqref{eq:moretrucks}-\eqref{eq:moredrones} are straightforward, since, \textit{ceteris paribus}, increasing the number of vehicles cannot worsen the makespan objective. In \eqref{eq:redistribution_to_trucks} and \eqref{eq:redistribution_to_drones}, we evenly reallocate the optimal tours of $\tsp_{m,n}(W)$ to $m'\geq 1$ trucks or $n'\geq 1$ drones, respectively.
\end{proof}

\lemboundopt*
\begin{proof}\label{proof:lem-boundOPT}
	This follows directly from the fact that each node needs to be visited by at least one vehicle, and that each damaged node needs to be visited by a truck.
\end{proof}

\lemmarelationsgraphs*
\begin{proof} \label{proof:lemma:relations_graphs}
	Relations~\eqref{eq:one_truck_star} and \eqref{eq:one_drone_star} follow immediately from the definition.
	In \eqref{eq:truck_only_star}, the minimum makespan is received by distributing nodes among truck as evenly as possible, in this case -- by the pigeonhole principle -- at least one truck travels over $\left\lceil\frac{n}{\nrtrucks}\right\rceil$ nondepot nodes. A similar argument holds for \eqref{eq:drone_only_star}.

	Recall that transitions between non-depot nodes must pass through the depot. Moreover, by the triangle inequality, it suffices to consider tours that visit each node in $W'_1\cup W'_2, n'=|W'_1|+|W'_2|,$ exacly once while solving \tspmnname.  To prove \eqref{eq:one_truck_star2} and \eqref{eq:one_drone_star2}, consider the total length of any such tour over nodes of $W'_1\cup W'_2$: $c(\depot, v_{i_1}, \depot, v_{i_2}, \depot, \ldots, \depot, v_{n'}, \depot)=\sum_{v\in W'_1\cup W'_2}c(\depot, v, \depot)=2d_1|W'_1|+2d_2|W'_2|$. The resulting expression is independent of the visiting order of nodes, which completes the proof.
\end{proof}

\lemmaupperboundimegabestcasesingletsp*
\begin{proof} \label{proof:lemma:upperbound_omega_bestcase_singletsp}
	Consider the same example as in Lemma~\ref{lemma:upperbound_omega_bestcase_multitsp} with $\tsp_{\nrtrucks,0}(C)= 2\nrtrucks+2\alpha \nrdrones$.

	Consider the case of $\alpha\neq 1$, as the case of $\alpha=1$ is straightforward because the nodes in $W_1$ and $W_2$ are indistinguishable.
	In $\singletsphome$, each truck $i$ receives $\frac{\nrdrones}{\nrtrucks}$ drones to assist in its tour $s^*_i$ consisting of $k^t$ nodes in $W_1$ and $k^d$ nodes in $W_2$.  For each tour $s^*_i$, its partitioning into segments is equivalent to solving \tspmnname\ over graph $\tilde{\mathcal{G}}(\nrtrucks, \nrdrones,1, \alpha)$ formed by the nodes of $s^*_i$ with 1 truck and $\frac{k^d}{k^t}$ drones. By \eqref{eq:mtsp_two_levelstar} and $f=k^t$, the resulting makespan is $2f=2k^t$ and, by the pigeonhole principle given $\alpha\neq 1$, the unique optimal solution is to assign all $k^t$ level-1 nodes to the truck and equally distribute $n^d$ level-2 nodes among the drones of this truck. At time $\theta=2k^t-1$, when the truck arrives to the last node of its segment, all the nodes in the drone segments have just been explored by the drones; so that the truck and the drones directly return to the depot resulting in $\singletsphome(I)=2k^t.$

	It follows $\underline{\omega}(\singletsphome)\leq\frac{\singletsphome(I)}{\tsp_{\nrtrucks,0}(C)}= \frac{1}{1+\alpha\frac{\nrdrones}{\nrtrucks}}$.
\end{proof}

\subsection{Additional proofs for bounds on drone-impact ratio and competitive ratio}

\lemmalowerboundomegaworstcasemultitspnew*
\begin{proof} \label{proof:lemma:lowerbound_omega_worstcase_multitsp_new}
	For $\alpha>1$, we have $\min\{1+\frac{1}{\alpha}, \frac{1+\alpha}{1+\epsilon}\}=1+\frac{1}{\alpha}$.
	Consider an instance $I$ with all nodes damaged ($D=C$) and the depot node $v_0$ together with nodes $C$ forming a star graph $\tilde{\mathcal{G}}(\min\{\nrtrucks,\nrdrones\},1)$.
	From \eqref{eq:truck_only_star}, $\tsp_{\nrtrucks,0}(C)=2$.
	$\multitsphome$ performs only drone tours in the \initial\ step, with makespan (using \eqref{eq:drone_only_star}) of $\frac{2}{\alpha}$.
	Since all these nodes turn damaged, trucks have to revisit these nodes resulting in the makespan of 2 for this second step of $\multitsphome$. Hence, $\multitsphome(I)=\frac{2}{\alpha}+2$ and $\frac{\multitsphome(I)}{\tsp_{\nrtrucks,0}(C)}= 1+\frac{1}{\alpha}$.

	For $\alpha\le 1$ we have $\min\{1+\frac{1}{\alpha}, \frac{1+\alpha}{1+\epsilon}\}=\frac{1+\alpha}{1+\epsilon}$.
	Consider an instance $I$ with all nodes damaged ($D=C$). The set of non-depot nodes $C=P\cup F$
	consists of \emph{proximate} nodes $P$ and \emph{far-away} nodes $F$ with $|P|=k^t$ and $|F|=\min\{k^t,k^d\}$.
	For each $p\in P$ we determine one (distinct) \emph{neighbor} node $f(p)\in F$.
	Our edge set is $E=\{(\depot,c):c\in C\}\cup \{(p,f(p))~: \forall p \in P\}$ with $c(\depot,p)=1~ \forall p\in P$, $c(\depot,f)=\alpha~ \forall f\in F$, $c(p,f)=\alpha-1+\varepsilon~ \forall p\in P$.

	Then, $\tsp_{\nrtrucks,0}(C)= 2+2\epsilon$, as each truck visits one far-away node and, if available, its neighbor proximate node.

	In the first step of $\multitsphome$,  each vehicle visits at most one node with the makespan of $\tsp_{\nrtrucks,\nrdrones}(C)= 2$. Since all these nodes are damaged and at least $\min\{k^t,k^d\}$ nodes have not been visited by trucks, trucks have to revisit these nodes, resulting in the makespan of $2\alpha$ for this second step of $\multitsphome$. Hence, $\multitsphome(I)=2+2\alpha$ and $\frac{\multitsphome(I)}{\tsp_{\nrtrucks,0}(C)}= \frac{1+\alpha}{1+\epsilon}$.
\end{proof}

\lemmacorupperbounds*
\begin{proof} \label{proof:cor-upperbounds}
	On the one hand, from Lemma~\ref{lem-boundOPT} and Lemma~\ref{lem-boundMultiTSP}, it follows that
	\[\sigma(\multitsphome)
		=\sup_I \frac{\multitsphome(I)}{\OPThome(I)}
		\leq \frac{\tsp_{\nrtrucks,\nrdrones}(C) + \tsp_{\nrtrucks,0}(D)}
		{ \max \{\tsp_{\nrtrucks,\nrdrones}(C), \tsp_{\nrtrucks,0}(D)\}}
		\leq 2.\]

	On the other hand, from Lemmata~\ref{lem-boundOPT} and \ref{lem-boundMultiTSP} and  from \eqref{eq:morenodes},   \eqref{eq:redistribution_to_trucks}, it follows
	\begin{align*}
		\sigma(\multitsphome) =\sup_I \frac{\multitsphome(I)}{\OPThome(I)}
		\leq \frac{\tsp_{\nrtrucks,\nrdrones}(C) + \tsp_{\nrtrucks,0}(\Ddrone)}
		{ \max \{\tsp_{\nrtrucks,\nrdrones}(C), \tsp_{\nrtrucks,0}(D)\}}\leq \\
		\le  \frac{\tsp_{\nrtrucks,\nrdrones}(C) +\alpha \left\lceil \frac{\nrdrones}{\nrtrucks}\right\rceil \tsp_{0,\nrdrones}(\Ddrone)}
		{ \max \{\tsp_{\nrtrucks,\nrdrones}(C), \tsp_{\nrtrucks,0}(D)\}}
		\le  \frac{\tsp_{\nrtrucks,\nrdrones}(C) +\alpha \left\lceil \frac{\nrdrones}{\nrtrucks}\right\rceil \tsp_{0,\nrdrones}(C)}
		{ \tsp_{\nrtrucks,\nrdrones}(C)} \leq                                \\
		\le  1 +\alpha \left\lceil \frac{\nrdrones}{\nrtrucks}\right\rceil.
	\end{align*}
\end{proof}

\lemmacorupperboundssingletsp*
\begin{proof} \label{proof:cor-upperbounds_singletsp}
	From Lemmata~\ref{lem-boundOPT} and \ref{lem-singletsphome} and from \eqref{eq:redistribution_to_trucks}, it follows
	\begin{align*}
		\sigma(\singletsphome) =\sup_I \frac{\singletsphome(I)}{\OPThome(I)}
		\leq \frac{\tsp_{\nrtrucks,0}(C)}
		{ \max \{\tsp_{\nrtrucks,\nrdrones}(C), \tsp_{\nrtrucks,0}(D)\}}\leq \\
		\le  \frac{\left(1+\alpha \left\lceil \frac{\nrdrones}{\nrtrucks}\right\rceil\right) \tsp_{\nrtrucks,\nrdrones}(C)}
		{ \tsp_{\nrtrucks,\nrdrones}(C)} \leq 1 +\alpha \left\lceil \frac{\nrdrones}{\nrtrucks}\right\rceil.
	\end{align*}
\end{proof}

\lemmakdkvlbnew*
\begin{proof} \label{proof:lemma:kdkv_LB_2new2}
	\begin{figure}
		\centering
		\begin{subfigure}[b]{0.495\textwidth}
			\includegraphics[scale=0.45]{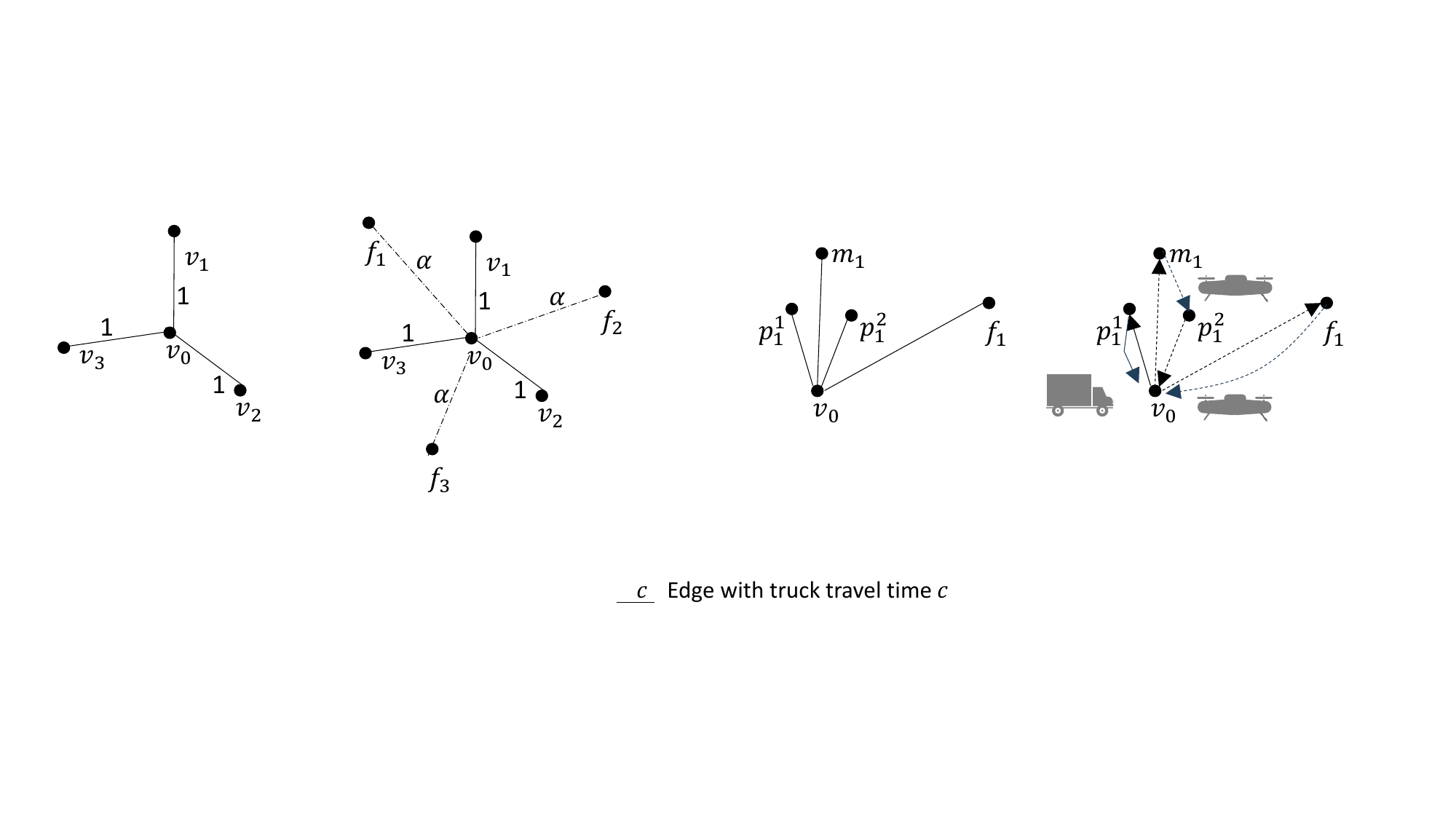}
			\caption{\footnotesize{Case of one slow truck and two fast drones ($\alpha>1$)
				}}
			\label{fig:Figure1}
		\end{subfigure}
		\begin{subfigure}[b]{0.495\textwidth}
			\includegraphics[scale=0.5]{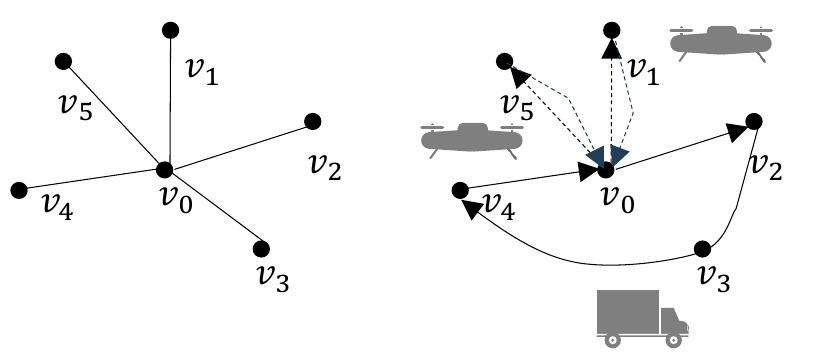}
			\caption{\footnotesize{Case of one truck, two drones, and $\alpha=\frac{1}{3}$
				}}
			\label{fig:worst_example_2}
		\end{subfigure}
		\caption{Worst case instances and initial tours of $\multitsphome$}\label{fig:worst_examples}
	\end{figure}

	In the following, we distinguish between \emph{fast} and \emph{slow} vehicles: \emph{fast} refers to drones when $\alpha > 1$ and to trucks when $\alpha < 1$, while \emph{slow} refers to the other vehicle type. Let $k^{\text{fast}}, k^{\text{slow}} \in \{\nrtrucks, \nrdrones\}$ denote the number of fast and slow vehicles, respectively, and set $k^{\min}=\min\{k^{\text{fast}}, k^{\text{slow}}\}$.
	Consider an instance $I$ defined on a graph $G = (V, E, c)$, where set $C=P\cup M\cup F$ is partitioned into  $k^{\text{slow}}+k^{\text{min}}$  proximate nodes $P$, $k^{\text{min}}$ medium-distance nodes $M$, and $k^{\text{fast}}-k^{\text{min}}$ far-distance nodes $F$.
	The edge set consists of edges $E_C=\{\{\depot, v\}:v\in C\}$, which connect the nodes to the depot and $2|M|$ additional edges connect each node in $M$ to exactly two (pairwise distinct) nodes in $P$, i.e.,  $E_{PM}=\{\{m,p^1_m\}, \{m,p^2_m\}: m\in M\}$ with $p^i_m\in P$ and $p^i_m\neq p^{i'}_{m'}$ if $i\neq i'$ or $m\neq m'$.
	For a given $T>0$, for the proximate nodes $p\in P$ we define the edge labels $c(\depot,p)$ such that
	\emph{proximate nodes} are located equidistantly from the depot,
	and that the makespan of a tour
	of a slow vehicle to any of these nodes equals $T$: $c^{\text{slow}}(\depot,p, \depot)=T$, $\forall p\in P$.
	Analogously, the edge labels for the \emph{far-distance nodes} are chosen such that these nodes are located equidistantly at the largest distance from the depot, such that the makespan of a tour of a fast vehicle to any of these nodes equals $T$: $c^{\text{fast}}(\depot,f, \depot)=T, \forall f\in F$.
	The edge labels $c(\depot,m):=a$ and $c(m,p_i^m):=b$ for all $m\in M$ and $p_i^m \in P$ are chosen such that
	$c^{\text{slow}}(\depot,m, \depot)>T$ and $c^{\text{fast}}(\depot,p^1_m, m, \depot)=c^{\text{fast}}(\depot,p^2_i, m, \depot)=T$.

	The set of damaged nodes consists of one proximate node: $D\in P, |D|=1$.

	Figure~\ref{fig:Figure1} illustrates an example of two fast drones ($\alpha>1$) and one truck. The constructed instance has $|C|=4$ nodes besides the depot: a far-distance node $f_1$, a medium-distance node $m_1$ and the associated proximate nodes $p^1_1$ and $p^2_1$. The figure also depict the tours of the truck and the drones in the\initial-step
	solution of $\multitsphome$.

	In an optimal \tspmnname\ solution on the instance $(\nrvehicles, \nrtrucks, \alpha, G)$,  $|F|$ fast vehicles visit exactly one node in $F$ and return to the depot, the remaining $k^{\text{fast}}-|F|$
	fast vehicles perform a tour of type $(\depot,p^1_i, m_i, \depot)$, visiting a node in $M$ and its associated proximate node in $P$ before returning to the depot.
	Each slow vehicle visits exactly one  node in $P$ and returns to the depot.
	Since the set of damaged nodes consists of exactly one proximate node, we have $\OPT(I)=\tsp_{\nrtrucks,\nrdrones}(C)=T$.

	In \multitsphome\ we complete the  sketched \tspmnname\ solution in the \initial\ step in time $\tsp_{\nrtrucks,\nrdrones}(C)=T$, and if the drone visited the damaged node, the \secondary\ step  of $\multitsphome$ takes at least $c^t(\depot, v, \depot)$ time units, which equals $T$ if trucks are slow and $\alpha T$ if trucks are fast.
	Hence $\frac{\multitsphome(I)}{\OPT(I^*)}= \min\{2,1+\alpha\}$.
\end{proof}

\lemmawcbigalphaoptimistic*
\begin{proof} \label{proof:lemma:wc-bigalpha1optimistic}
	Observe that the lemma requires $\left \lceil \frac{\nrdrones}{\nrtrucks}\right \rceil\ge \frac{1}{\alpha} $.
	Consider an instance, in which the depot node $\depot$ and nodes $C$ form a star graph $\tilde{\mathcal{G}}(\nrdrones + \left\lceil \frac{1}{\alpha}\right \rceil \nrtrucks,1)$.

	We first argue that on this graph, $\tsp_{\nrtrucks,\nrdrones}(C)=2\left\lceil \frac{1}{\alpha}\right \rceil$: this is attained if each drone visits one non-depot node and each truck visits $\left\lceil \frac{1}{\alpha}\right \rceil$ non-depot nodes. Indeed, if a drone would visit two nodes or more, its tour duration would be at least $\frac{4}{\alpha}> \frac{2(1+\alpha)}{\alpha}= 2(\frac{1}{\alpha}+1)>2 \left\lceil \frac{1}{\alpha}\right \rceil$.

	Regardless of which nodes are visited by drones in $\multitsphome$, we can always construct an instance, in which
	$n'=\min\{\nrdrones, \left\lceil \frac{1}{\alpha}\right \rceil\nrtrucks\}$
	of these nodes are damaged. Consequently, in \multitsphome, these nodes must be revisited by the $\nrtrucks$ trucks.

	If $n'=\left\lceil \frac{1}{\alpha}\right \rceil\nrtrucks$, then each truck visits $\left\lceil \frac{1}{\alpha}\right\rceil$ non-depot nodes in the second step of $\multitsphome$. If $n'=\nrdrones$, then there is at least one truck that visits $\left \lceil \frac{\nrdrones}{\nrtrucks}\right \rceil\ge \frac{1}{\alpha}$ non-depot
	nodes, which we round up since this number is integer. Hence, there is a truck that visits at least $\left\lceil \frac{1}{\alpha}\right \rceil$
	non-depot nodes and the second step of $\multitsphome$ takes at least $2\left\lceil \frac{1}{\alpha}\right \rceil$.

	Under the full information, $OPT(I^*)=\tsp_{\nrtrucks,\nrdrones}(C)=2\left\lceil \frac{1}{\alpha}\right \rceil$, with all $n'$ damaged nodes visited by $\nrtrucks$ trucks. Hence,  $\frac{\multitsphome(I)}{\OPT(I^*)}=2$ follows.

\end{proof}

\lammawcratiosmaller*
\begin{proof} \label{proof:lemma:wc-ratiosmaller2new}
	Given $\alpha\left\lceil \frac{\nrdrones}{\nrtrucks}\right \rceil<1$, follows $\alpha<1$. Consider an instance, in which the depot node $\depot$ and nodes $C$ form a graph $\tilde{\mathcal{G}}(\nrdrones + \left\lfloor \frac{1}{\alpha}\right \rfloor \nrtrucks,1)$.

	We first argue that on this graph, $\tsp_{\nrtrucks,\nrdrones}=\frac{2}{\alpha}$: this is attained if each drone visits one node, and each truck visits $\left\lfloor \frac{1}{\alpha}\right \rfloor$ nodes. By \eqref{eq:one_truck_star} and \eqref{eq:one_drone_star}, any solution   where a truck visits more than $\left\lfloor \frac{1}{\alpha}\right \rfloor$ nodes or any drone visits more than one node, has a larger makespan.

	Now assume that $n'=\min\{\left\lfloor \frac{1}{\alpha}\right \rfloor \nrtrucks,\nrdrones\}$ nodes that have been visited  by a drone in \multitsphome\ are damaged.
	Then  these nodes have to be revisited by the trucks with at least one truck visiting $\left\lceil \frac{n'}{\nrtrucks}\right \rceil$ nodes in the \secondary\ step. The makespan of this step is $2 \left\lceil \frac{n'}{\nrtrucks}\right \rceil$ and $\multitsphome(I)=\frac{2}{\alpha}+2 \left\lceil \frac{n'}{\nrtrucks}\right \rceil$ for this instance $I$.

	In the optimal $\text{RDP}^*$ solution, the $n'$ are served by the truck tours, and $OPT(I^*)=\tsp_{\nrtrucks,\nrdrones}(I^*)=\frac{2}{\alpha}$.

	Finally, observe that $n'=k^d$, since
	\begin{align*}
		\alpha \cdot \left\lceil \frac{\nrdrones}{\nrtrucks}\right \rceil<1
		\Rightarrow \left\lceil \frac{\nrdrones}{\nrtrucks}\right \rceil <\frac{1}{\alpha}
		\Rightarrow
		\left\lceil \frac{\nrdrones}{\nrtrucks}\right \rceil \le \left\lfloor \frac{1}{\alpha}\right\rfloor
		\Rightarrow
		\frac{\nrdrones}{\nrtrucks} \le \left\lfloor \frac{1}{\alpha}\right\rfloor
		\Rightarrow \nrdrones  \le \left\lfloor \frac{1}{\alpha}\right\rfloor \nrtrucks
	\end{align*}
	We obtain $\frac{\multitsphome(I)}{\OPT(I^*)} = 1+\left \lceil \frac{\nrdrones}{\nrtrucks}\right \rceil\alpha$.

	Figure~\ref{fig:worst_example_2} illustrates $I$ for $k^t=1, k^d=2,$ and $\alpha=\frac{1}{3}.$
\end{proof}

\lemmawcbigalpha*
\begin{proof} \label{proof:lemma:wc-bigalpha1-1}
	Consider an instance $I$ with depot node $v_0$ and nodes $C$ forming a star graph $\gstar(b\nrtrucks+ \nrdrones,1)$.
	Observe that if the number of damaged nodes $|D|\leq bk^t$, then $OPT(I^*)=\tsp_{\nrtrucks,\nrdrones}(C)=2b$. Indeed, each truck visits at most $b$ non-depot nodes and each drones visits at most 1 non-depot node in solutions with the same or better objective value; given the total number of nodes, these relations hold exactly and the makespan follows from \eqref{eq:one_truck_star}-\eqref{eq:one_drone_star}.

	In the \initial\ phase of $\singletsphome$, truck tours that visit all nodes  with the makespan $\tsp_{\nrtrucks,0}(C)=2(b+\left\lceil\frac{\nrdrones}{\nrtrucks}\right\rceil)$ are built. There is at least one tour with exactly $(b+\left\lceil\frac{\nrdrones}{\nrtrucks}\right\rceil)$ non-depot nodes.
	Consider an instance $I$, in which all but the first non-depot node
	initially assigned to one such truck (and its supporting drones) are damaged. This is possible if $\left\lceil\frac{\nrdrones}{\nrtrucks}\right\rceil+b-1\leq k^t$. Then, $\singletsphome= 2(b+\left\lceil\frac{\nrdrones}{\nrtrucks}\right\rceil)$, and $\frac{\singletsphome(I)}{\OPT(I^*)}=1+\alpha \left\lceil \frac{\nrdrones}{\nrtrucks}\right\rceil$.
\end{proof}

\lemmawcbigalphasingletsp*
\begin{proof} \label{proof:lemma:wc-bigalpha1part2-singletsp}
	Consider an instance $I$ with depot node $v_0$ and nodes $C$ forming a star graph $\gstar(\nrtrucks+ \alpha\nrdrones,1)$.
	Observe that if the number of damaged nodes $|D|\leq k^t$, then $OPT(I^*)=\tsp_{\nrtrucks,\nrdrones}(C)=2$. Indeed, each truck visits at most 1 non-depot node and each drones visits at most $\alpha$ non-depot nodes in solutions with the same or better objective value; given the total number of nodes, these relations hold exactly and the makespan follows from \eqref{eq:one_truck_star}-\eqref{eq:one_drone_star}.

	In the \initial\ phase of $\singletsphome$, truck tours that visit all nodes  with the makespan $\tsp_{\nrtrucks,0}(C)=2(1+\left\lceil\frac{\alpha\nrdrones}{\nrtrucks}\right\rceil)$ are built. There is at least one tour with exactly $(1+\left\lceil\frac{\alpha\nrdrones}{\nrtrucks}\right\rceil)$ non-depot nodes.
	Consider an instance $I$, in which all but the first non-depot node initially assigned to one such truck (and its supporting drones) are damaged. This is possible if $1+\left\lceil\frac{\alpha\nrdrones}{\nrtrucks}\right\rceil-1\leq k^t$. Then, $\singletsphome= 2(1+\left\lceil\frac{\alpha\nrdrones}{\nrtrucks}\right\rceil)$ and $\frac{\singletsphome(I)}{\OPT(I^*)}=1+\left\lceil \frac{\alpha \nrdrones}{\nrtrucks}\right\rceil$.
\end{proof}

\lemalphainn*
\begin{proof} \label{proof:lem-alphainN}
	Consider an instance $I$ with $|D|\leq \nrtrucks$ and the depot $\depot$ together with non-depot nodes $C$ forming a star graph $\gstar(\alpha \nrdrones+\nrtrucks,1)$.
	The optimal $\text{RDP}^*$ solution has a makespan of $2$: the trucks visit the damaged nodes and the drones visit the remaining $\alpha \nrdrones$ nodes, both vehicles' tours would need exactly time $2$.

	Consider a deterministic online policy \policy.
	Consider the situation at time $\theta:=\frac{2(\alpha-1)+1}{\alpha}$. At this point in time, each drone has  visited at most $\alpha$ nodes -- and if it has visited $\alpha$ nodes, it has just arrived at the last one of these.
	Each truck has visited either $0$ or $1$ nodes -- denote the respective set of trucks by $T_0$ and $T_1$.

	We denote by $C^*$ the set nodes that are either not yet visited at time $\theta$, or where a drone has arrived at time $\theta$. Note that $|C^*|\ge \nrdrones +|T_0|$.

	Note that since the distance between any two nodes in $C^*$ is two and the distance between any node in $C$ to the depot is one, any truck in set $|T_0|$ has distance $\ge 1$ to all or all but one node in $C^*$. Thus, there are at least $\nrdrones$ nodes to which all trucks in $T_0$ have a distance of $\ge 1$.

	Assume that one of them is damaged.

	Then it takes each truck in $T_0$ time $\ge 1$ to reach it, and for each truck in $T_1$, the time to get there is $\ge 2-(\theta-1).$

	Thus, when taking a truck from $T_0$, the makespan is $\theta+2$ as the truck has to return to the depot afterwards, for a truck from set $T_1$ it is $\theta+2-(\theta-1) +1=4$.

	This shows that the competitive ratio  is
	$\frac{\ALG(I)}{\OPT(I^*)}    \ge \frac{2+\theta}{2}    = \frac{2+\frac{2(\alpha-1)+1}{\alpha}}{2}     = 2-\frac{1}{2\alpha}$.
\end{proof}

\lemalphagenlowernew*
\begin{proof} \label{proof:lem-alpha1-genlower-new}
	Note that if $\alpha\ge 2$ or $\alpha\le \frac{1}{2}$, $\sigma\ge 1$ and there is nothing to show.
	For all other cases, consider an instance $I$  with $|D|\leq k^t$ and with the depot $\depot$ and non-depot nodes $C$ forming a
	star graph $\gstar(\nrtrucks+\nrdrones,1)$.
	\begin{enumerate}
		\item If $\alpha\in [1,2]$, we have $\min\{\frac{2}{\alpha},2\alpha\}=\frac{2}{\alpha}$.
		      The full-information optimum has a makespan of $2$, because we can visit each damaged node with a truck and use the remaining vehicles to visit the other nodes (in time $\frac{2}{\alpha}\le 2$) to obtain a solution with this makespan. \\
		      Now assume that we have a policy with competitive ratio $<\frac{2}{\alpha}$.
		      Note that in such a policy, no vehicle can visit more than one node, because that would lead to a makespan of at least $\frac{4}{\alpha}$ and thus a competitive ratio of $\frac{\frac{4}{\alpha}}{2}=\frac{2}{\alpha}$.
		      Thus, each vehicle visits exactly one node.  Now assume that one of the nodes visited by drone is damaged (and none of the other nodes is). So one of the trucks has to visit the 'drone node' as well, we have $\nrtrucks+\nrdrones+1$ visits in total to be executed with $\nrtrucks+\nrdrones$ vehicles, thus one vehicle executes two visits. Proven by contradiction.
		\item If $\alpha\in (\frac{1}{2},1]$, we have $\min\{\frac{2}{\alpha},2\alpha\}=2\alpha$.
		      The full-information optimum has a makespan of $\frac{2}{\alpha}$, because we can visit each damaged node with a truck and use the remaining vehicles to visit the other nodes to obtain a solution with this makespan. Optimality of this solution can be seen because any solution where a vehicle visits two nodes has makespan $\ge 4 \ge \frac{2}{\alpha}$.\\
		      Assume that there is an online deterministic policy that achieves a competitive ratio $<2\alpha$. This means that the makespan of this policy is $< 2\alpha \frac{2}{\alpha}= 4$.
		      Note that in such a policy, no vehicle can visit more than one node, because that would lead to a makespan of at least $4$. Thus, each vehicle visits exactly one node.  Now assume that one of the nodes visited by drone is damaged (and none of the other nodes is). So one of the trucks has to visit the 'drone node' as well, we have $\nrtrucks+\nrdrones+1$ visits in total to be executed with $\nrtrucks+\nrdrones$ vehicles, thus one vehicle visits at least two nodes. Proven by contradiction.
	\end{enumerate}
\end{proof}

\section{Benchmark graphs} \label{sub:benchmarkgraphs}
We generate different benchmark graphs to evaluate our approaches experimentally.
The Random graph class is already sufficiently described in Section~\ref{sub:instances}.
Here we give a more detailed description of the remaining graph classes.

For the \emph{1-Center} and the \emph{2-Center} instances, we follow the approach described in
\cite{opt-approaches-for-tsp-with-drones}.
For the 1-center instances, we locate the depot at $(0, 0)$ and sample potential demand nodes by drawing an angle $a \in [0, \pi]$ uniformly,  and a distance $r$ from a normal distribution with mean $0$ and standard deviation $50$.  Then the position of the node is given by $(r \cos(a), r \sin(a))$.
For the $2$-center instances, we do the same but add another central node at $(200, 0)$ and translate a sampled node by $(200, 0)$ with a probability of $50 \%$, so that in the end the nodes are distributed around the two centers $(0, 0)$ and $(200, 0)$.
We construct a complete graph over these nodes, taking the Euclidean distance as edge labels.

The \emph{coastal} graphs are generated based on the idea that nodes represent cities in coastal regions. Typically, cities are mostly settled near the coastline, while cities further land inwards are more likely to settle perpendicular to cities on the coast.
The nodes of the coastal graphs are uniquely sampled according to a probability function, which is influenced by the previously sampled nodes, on a $101 \times 101$ lattice with dimensions $1 \times n / 15$, where $n$ is the number of sampled nodes.
The probability is affected by its distance to the left side of the lattice (which represents the coastline) and the previously sampled nodes:
Nodes around sampled nodes have a probability of zero, while nodes perpendicular to other cities with a relatively small land inwards $x$-coordinate differences have an increased probability.
Also for the coastal graphs, edge labels correspond to Euclidean distances.
However, coastal graphs are not complete.
The edges between two nodes are sampled in a two-phase procedure.
In the first phase, each edge is assigned a probability based on the Euclidean distance between the nodes.
Then we create an maximum spanning tree according to the edge probability with Kruskal's algorithm~\citeA{mst-kruskal}.
In the second phase, the probability is reduced if the angle between the considered edge and an already chosen edges is shallow and set to $0$ if it would cross an existing edge.
We add edges iteratively by sampling with this probability function over all unconnected node pairs until the desired number of graphs is reached.

The \emph{mountain} class of graphs represent networks of cities in mountain areas.
Cities are prominently built in the valleys between mountains, and less likely further up the mountains.
This model works analogously to the coastline mode.
Here, the lattice has dimensions $\sqrt{n /15} \times \sqrt{n / 15}$ and the probability function is derived from the height of the node on a mountain, where higher nodes have a lower probability.
Also, we set the probability of nodes near other accepted nodes to zero.
In the first phase of the edge construction, the probability of an edge scales with the inverse of the height of the highest point in the straight line between the incident nodes on any mountain and the Euclidean distance between the two incident nodes.
In the second phase, we iteratively sample over the unused edges, where in addition shallow angles diminish the probability of edges and crossing set the probability of edges to $0$.

A more detailed explanation of the graph generation process and the standalone implementation can be found in \citeA{neugebauer2026gengraphAppendix}.
The used benchmark graphs and a standalone instance generator are provided in \citeA{neugebauer2026implementationAppendix}.

\section{Implementation of the policies }\label{sec:implementation}
All policies considered in this paper, as well as the $\truckonly$ policy used for comparison, require (repeated) solution of \tspmnname. In Sections~\ref{sub:mip} and~\ref{sub:dp}  we provide a  mixed-integer linear program (MILP) and a dynamic program (DP), respectively, to solve \tspmnname. We also comment on how these are modified to solve RDP$^*$ which we need to solve in order to compute competitive ratios.

On \citeA{neugebauer2026implementationAppendix} we provide a Rust~\citeA{rust} implementations of the policies.
For each policy we provide two implementations: one that solves \tspmnname\ by solving the MILP (using Gurobi $13.0.1$ \citeA{gurobi}) and one, only applicable for instances with $\nrdrones=\nrtrucks=1$, that uses the DP to solve \tspmnname.
For solving the \ISMALL\ instances we use the MILP-based implementation, for the other two instance-classes we use the DP-based one due to superior performance.

\subsection{Mixed-Integer Linear Programming Formulation}
\label{sub:mip}

Consider an instance $(G,\alpha,m,n,W)$ of \tspmnname\ where
$G=(V,E,c)$ is an undirected graph with a designated depot $\depot\in V$ and positive edge labels $c$, $\alpha$ is the drone speed, $m$ and $n$ with $m+n\geq 1$ denote the numbers of trucks and drones, respectively, and a node set $W\subseteq V\setminus \{\depot\}$.
For the formulation of the MILP, we build a directed complete graph $G' = (V', E',c')$ with $V':=W\cup\{\depot\}$
by taking a complete directed graph over the vertices $V'$ and assigning each edge $(u, v) \in E'$ the weight $c'(u,v) > 0$ of the length of the shortest $u$--$v$ path in $G$.
To distinguish the different tours and their corresponding vehicles, we introduce the indices $I^t = \set{1, \dots, m}$ and $I^d = \set{m + 1, \dots, m + n}$ for the drone and truck tours respectively.
The set of all tour indices is given by $I = I^d \cup I^k$.
Let $\delta^-(v)$ be the set of edges in $G'$ originating from $v \in V'$ and $\delta^+(v)$ be the edges terminating in $v \in V'$.
The variable $x_e^i \in \set{0, 1}$ indicates if the edge $e \in E'$ is used in the tour of vehicle $i \in I$ and the variable $T \geq 0$ represents the final makespan, which should be minimized.
Finally, the variable $\theta_v \in [1, \abs{V}]$ indicates the position of node $v$ in the tour of the visiting vehicle.

\begin{mini!}
{}{T \nonumber}{}{}
\addConstraint{\sum_{e \in \delta^+(v)} x_e^i}{= \sum_{e \in \delta^-(v)} x_e^i}{\quad \forall v \in V' \; \forall i \in I {\label{milp:flow1}}}
\addConstraint{\sum_{e \in \delta^+(\depot)} x_e^i}{\leq 1}{\quad \forall i \in I \label{milp:flow2}}
\addConstraint{\sum_{i \in I} x_e^i}{\leq 1}{\quad \forall e \in E' \label{milp:disjoint_tours}}
\addConstraint{\sum_{e \in \delta^-(v)} \sum_{i \in I} x_e^i}{= 1}{\quad \forall v \in W \label{milp:vertex_visited}}
\addConstraint{T}{\geq \sum_{(u,v) = e \in E'} c(u, v) x_e^i}{\quad \forall i \in I^t \label{milp:lb_makespan_truck}}
\addConstraint{\alpha T}{\geq \sum_{(u, v) = e \in E'} c(u, v) x_e^i}{\quad \forall i \in I^d \label{milp:lb_makespan_drone}}
\addConstraint{\theta_v}{\geq \theta_u + 1 - \abs{V} (1 - x_{uv}^i)}{\quad \forall i \in I \; \forall (u,v) \in E', u, v \neq \depot \label{milp:mtz_sec}}
\addConstraint{x_e^i}{\in \set{0,1}}{\quad \forall e \in E' \; \forall i \in I \nonumber}
\addConstraint{T}{\in \mathbb{R}_{\geq 0} \nonumber}
\addConstraint{\theta_v}{\in [1, \abs{V}]}{\quad \forall v \in V' \setminus \set{\depot} \nonumber}
\end{mini!}

Constraints~\labelcref{milp:flow1} are flow constraints, they ensure that if a vehicle enters a node, it also leaves the node.
Constraints~\labelcref{milp:flow2} ensure that each vehicle makes only one tour.
Note that these constraints allow a vehicle to stay at the depot.
To strengthen the model, we added the constraint~\labelcref{milp:disjoint_tours} that enforces the vehicles' tours to be edge-disjoint.
The constraints~\labelcref{milp:vertex_visited} enforce that each node is visited by exactly one vehicle.
The makespan of each vehicle's tour gives a lower bound for the total makespan $T$, which is described via the constraints~\labelcref{milp:lb_makespan_truck,milp:lb_makespan_drone}.
Constraints~\labelcref{milp:mtz_sec} are Miller-Tucker-Zemlin subtour elimination constraints~\citeA{mtz-subtour-elimination-constraints}, which ensure that each node $v$ is assigned a label $\theta_v$ which corresponds to the position in the tour of the visiting vehicle.

To solve RDP$^*$, we set $W=C$ and add the constraint~\eqref{milp:trucks_visit_demand}  to enforce that all demand nodes $D$ are visited by the trucks.
\begin{equation}
	\label{milp:trucks_visit_demand}
	\sum_{i \in I^t} \sum_{e \in \delta^-(d)} x_e^i \geq 1 \quad \forall d \in D
\end{equation}

\subsection{Dynamic program}
\label{sub:dp}
Consider an instance $(G,\alpha,m,n,W)$ of \tspmnname\ as before.
Analogously to the preprocessing for the MILP, we build a (\emph{undirected}) complete graph
$G' = (V', E',c')$ with $V':=W\cup\{\depot\}$
by taking a complete (undirected) graph over the vertices $V'$ and assigning each edge $\{u, v\} \in E'$ the weight $c'(u,v) > 0$ of the length of the shortest $u$--$v$ path in $G$.

In the first step, we use the  Bellman-Held-Karp algorithm~\citeA{bellman-held-karp-1,bellman-held-karp-2} for the $\tsp$ to obtain (via dynamic programming, in time $\Oh(\abs{V}^2 2^{\abs{V}})$) a table $\bhktabelle[U, u]$, which, for all subsets $U \subseteq W$ and nodes $u \in U$, contains a shortest path starting from $\depot$, traversing all nodes in $U$, and ending in $u$.
Denote by $\ell(\bhktabelle[U,u])$ the length of such a shortest path.
After this step, we compute $\multitsphome_{m, n}(C)$ by iterating over all possible partitions of $W$ into $m + n$ disjoint subsets, which induce truck and drone tours, and taking the tours with the smallest makespan.
See Algorithm~\ref{alg:multi-tsp}.

Algorithm~\ref{alg:multi-tsp} has a runtime of $\Oh(\abs{V} (m + n)^{\abs{V}})$ due to the runtime required to iterate through all pairwise disjoint $m + n$ subsets of $C$, which dominates the runtime of Bellmann-Held-Karp's algorithm as long $\nrtrucks + \nrdrones \geq 2$.
The space consumption of Algorithm~\ref{alg:multi-tsp} is $\Oh(\abs{V} 2^{\abs{V}})$ with a dynamic representation of the paths due to the table size of $\bhktabelle$.

To solve RDP$^*$, we modify the algorithm such that in the \textbf{for}-loop only partitions with  $\bigcap_{i=1}^m U_i\supset D$ are considered.

We provide an implementation of the Algorithm~\ref{alg:multi-tsp} and the MILP of Section~\ref{sub:mip} in \citeA{neugebauer2026implementationAppendix}.

\begin{algorithm}
	\caption{Dynamic programm for \tspmnname}
	\label{alg:multi-tsp}
	\begin{algorithmic}[1]
		\Require Graph $G' = (V', E', c')$ with edge-weights $c > 0$, $\alpha > 0$, number of trucks and drones $m, n \in \N_0$.
		\Ensure Up to $m$ truck and $n$ drone tours in $G'$ that cover all nodes and minimize $\tsp_{m,n}(W)$.
		\State Compute table $\bhktabelle$ with Bellmann-Held-Karp's algorithm
		\State $\ttours = \set{\emptyset \colon i = 1, \dots, m}$
		\State $\dtours = \set{\emptyset \colon i = 1, \dots, n}$
		\State $T = \infty$
		\For{pairwise disjoint subsets $U_1, \dots, U_{m + n} \subseteq C$ with $\cup_{i = 1}^{m + n} U_i = W$}
		\State $u_i = \argmin{u \in U_i } \ell(\bhktabelle[U_i, u]) + c(u, \depot)$
		\State $T' = \max_{i = 1, \dots, m + b} \beta_i \left( \ell(\bhktabelle[U_i, u_i]) + c(u_i, \depot) \right)$ where $\beta_i = 1$ if $i \in \set{1, \dots, m}$ and $\beta_i = \alpha$ else.
		\If{$T' < T$}
		\State $T = T'$
		\State $\ttours = \set{\bhktabelle[U_i, u_i] \cup \set{c(u_i, \depot)} \colon i = 1, \dots, m}$
		\State $\dtours = \set{\bhktabelle[U_i, u_i] \cup \set{c(u_i, \depot)} \colon i = m+1, \dots, m + n}$
		\EndIf
		\EndFor
		\State \Return $\ttours, \dtours$
	\end{algorithmic}
\end{algorithm}

\bibliographyA{mybib}

\end{document}